\documentclass{amsart}
\usepackage[a4paper,outer=3.5cm,inner=3.5cm,top=3cm, bottom=3cm]{geometry} 
\usepackage[english]{babel}
\usepackage[utf8]{inputenc}
\usepackage{amsmath}
\usepackage{amssymb}
\usepackage{amsthm}
\usepackage{cleveref}
\usepackage{enumerate}
\usepackage[shortlabels]{enumitem}
\usepackage{mathtools}
\usepackage{tikz-cd}
\usepackage{stmaryrd}
\usepackage{verbatim}

\newcommand{\overbar}[1]{\mkern 1.5mu\overline{\mkern-1.5mu#1\mkern-1.5mu}\mkern 1.5mu}

\newtheorem{thm}{Theorem}[section]
\newtheorem{Theorem}[thm]{Theorem}
\newtheorem{Lemma}[thm]{Lemma}
\newtheorem{Proposition}[thm]{Proposition}
\newtheorem{Corollary}[thm]{Corollary}
\newtheorem{claim}[thm]{Claim}

\theoremstyle{definition}
\newtheorem{Definition}[thm]{Definition}
\newtheorem{Question}[thm]{Question}
\newtheorem{Example}[thm]{Example}
\newtheorem{Remark}[thm]{Remark}

\newcommand{\A}{\mathbb{A}}
\newcommand{\B}{\mathbb{B}}
\newcommand{\C}{\mathbb{C}}
\newcommand{\F}{\mathbb{F}}
\newcommand{\G}{\mathbb{G}}
\newcommand{\N}{\mathbb{N}}
\renewcommand{\O}{\mathcal{O}}
\renewcommand{\P}{\mathbb{P}}
\newcommand{\Q}{\mathbb{Q}}

\newcommand{\Z}{\mathbb{Z}}
\newcommand{\m}{\mathfrak{m}}

\newcommand{\Zp}{{\Z_p}}

\newcommand{\RGamma}{R\Gamma}

\newcommand{\Hom}{\operatorname{Hom}}
\newcommand{\Map}{\operatorname{Map}}

\newcommand{\Ext}{\operatorname{Ext}}

\newcommand{\End}{\operatorname{End}}

\newcommand{\Gal}{\operatorname{Gal}}
\newcommand{\Perf}{\operatorname{Perf}}

\newcommand{\Spa}{\operatorname{Spa}}
\newcommand{\Spf}{\operatorname{Spf}}
\newcommand{\Spec}{\operatorname{Spec}}

\newcommand{\cts}{{\operatorname{cts}}}
\newcommand{\lc}{{\operatorname{lc}}}
\newcommand{\an}{\mathrm{an}}
\newcommand{\hotimes}{\hat{\otimes}}

\newcommand{\et}{{\mathrm{\acute{e}t}}}
\newcommand{\proet}{\mathrm{pro\acute{e}t}}

\newcommand{\qproet}{\mathrm{qpro\acute{e}t}}

\newcommand{\etqcqs}{{\mathrm{\acute{e}t}\text{-}\mathrm{qcqs}}}

\newcommand{\Zar}{\mathrm{Zar}}
\newcommand{\ZAR}{\mathrm{ZAR}}
\newcommand{\AN}{\mathrm{AN}}

\newcommand{\HT}{\operatorname{HT}}
\newcommand{\aeq}{\stackrel{a}{=}}

\newcommand{\wh}{\widehat}
\newcommand{\U}{\O^\times_1}
\newcommand{\bOx}{\overbar{\O}^{\times}}
\newcommand{\bOpx}{\overbar{\O}^{+\times}}

\newcommand{\Pic}{\operatorname{Pic}}
\newcommand{\uP}{\underline{\Pic}}
\newcommand{\NS}{\mathrm{NS}}
\newcommand{\LSD}{\mathrm{LSD}}
\newcommand{\Spd}{\mathrm{Spd}}
\newcommand{\wt}{\widetilde}
\newcommand{\perf}{\mathrm{perf}}
\renewcommand{\diamond}{\diamondsuit}

\newcommand{\twolim}{2\text{-}\varinjlim}
\newcommand{\rk}{\mathrm{rk}}
\newcommand{\Char}{\operatorname{char}}
\renewcommand{\lim}{\varprojlim}
\newcommand{\Rlim}{R\!\lim}
\newcommand{\cH}{{\ifmmode \check{H}\else{\v{C}ech}\fi}}
\newcommand{\tf}{[\tfrac{1}{p}]}

\newcommand*\isomarrow{%
	\xrightarrow{\raisebox{-0.35em}{\smash{\ensuremath{\sim}}}}
}
\newcommand*\isomfrom{%
	\xleftarrow{\raisebox{-0.35em}{\smash{\ensuremath{\sim}}}}
}  

\tikzset{
	labelrotate/.style={anchor=south, rotate=90, inner sep=.5mm}} 
\tikzset{
	labelrotatep/.style={anchor=north, rotate=90, inner sep=.75mm}}

\makeatletter
\newcommand{\doublewidetilde}[1]{{%
		\mathpalette\double@widetilde{#1}%
}}
\newcommand{\double@widetilde}[2]{%
	\sbox\z@{$\m@th#1\widetilde{#2}$}%
	\ht\z@=.85\ht\z@
	\widetilde{\box\z@}%
}
\def\mathcenterto#1#2{\mathclap{\phantom{#1}\mathclap{#2}}\phantom{#1}}
\let\old@widetilde\doublewidetilde
\def\widetildeto#1#2{\mathcenterto{#2}{\doublewidetilde{\mathcenterto{#1}{#2\,}}}}
\makeatother
\newcommand{\wtt}[1]{\widetildeto{I}{#1}}

\begin{document}
\title[Line bundles on perfectoid covers]{Line bundles on perfectoid covers:\\case of good reduction}
\author{Ben Heuer}
\date{}

\maketitle
\begin{abstract}
	We study Picard groups and Picard functors of perfectoid spaces which are limits of rigid spaces. For sufficiently large covers that are limits of rigid spaces of good reduction, we show that the Picard functor can be represented by the special fibre.
	We use our results to answer several open questions about Picard groups  of perfectoid spaces from the literature, for example we show that these are not always $p$-divisible. Along the way, we construct a ``Hodge--Tate spectral sequence for $\G_m$'' of independent interest.
\end{abstract}
\section{Introduction}
The goal of this article is to study line bundles and Picard groups of perfectoid spaces. In particular, we describe first instances of representable Picard functors of perfectoid spaces.

Since their introduction  by Scholze \cite{perfectoid-spaces},
perfectoid spaces have found manifold applications in number theory, algebra and geometry. In many such applications, perfectoid spaces arise naturally as certain limits (``tilde-limits'') of infinite towers of rigid spaces. For example, this happens in the context of $p$-adic Hodge theory \cite{Scholze_p-adicHodgeForRigid} as well as for Rapoport--Zink spaces and Shimura varieties of infinite level \cite{ScholzeWeinstein}\cite{torsion}. It is therefore of general interest to understand the behaviour of line bundles in such infinite perfectoid towers. For this reason, in this article, we set out to launch a first systematic study of the following natural  question:

Throughout, let $K$ be any perfectoid field of residue characteristic $p$, for example $K=\C_p$.
	\begin{Question}\label{q:Pic(X)->Pic(X_infty)-surj}
		Let $X_\infty\sim \varprojlim_{i\in I} X_i$ be a perfectoid tilde-limit of a cofiltered  inverse system of smooth rigid spaces over $K$ with finite transition maps.  Consider the pullback map
		of analytic Picard groups
		\begin{equation}\label{eq:q1}
			\textstyle\varinjlim_{i\in I} \Pic(X_i)\to \Pic(X_\infty).
		\end{equation}
		How far is this from being an isomorphism, i.e.\
		what are its kernel and cokernel? 
	\end{Question}
	\begin{Remark}\label{rem:motivation}
	We are interested in these questions for the following concrete reasons:
	\begin{enumerate}
		
		\item It is natural to ask how much of the many known results about Picard groups of rigid spaces, e.g.\ representability of Picard functors, carries over to more general settings like the pro-\'etale site introduced by Scholze \cite{Scholze_p-adicHodgeForRigid}. This especially concerns the case where $X_\infty$ is a perfectoid object in the pro-\'etale site of a rigid space $X$.
		\item When $A$ is any abelian variety over $K$ and $K$ is a complete algebraically closed field, then the multiplication-by-$p$ maps define a perfectoid tilde-limit $A_\infty\sim \varprojlim_{[p]}A$ by \cite[Theorem~1]{perfectoid-covers-Arizona}. In \cite{heuer-isoclasses}, 
		we will use the results of this article about $\Pic(A_\infty)$ to prove a new pro-\'etale uniformisation theorem for abelian varieties.
		
		\item Let $X$ be a connected smooth proper variety over an algebraically closed complete field $K$ over $\Q_p$. Let $(X_i)_{i\in I}$ be the cofiltered inverse system of connected finite \'etale covers of $X$ together with a choice of lift of some base point of $X$ (cf \cite[\S4.3]{heuer-v_lb_rigid}). This has a perfectoid tilde-limit $X_\infty$ at least when $X$ is a curve \cite[Corollary 1.1]{perfectoid-covers-Arizona}.
		
		The question which vector bundles on $X$ are trivialised on $X_\infty$ is intimately related to Faltings' open question \cite[\S5]{Faltings_SimpsonI} which Higgs bundles correspond to local systems under the $p$-adic Simpson correspondence \cite[\S3]{wuerthen_vb_on_rigid_var}\cite[\S5]{heuer-v_lb_rigid}\cite[\S1.2]{heuer-geometric-Simpson-Pic}.  This is expected to be a very deep question, especially after the recent work of Andreatta \cite{andreatta2024padic} has ruled out what was previously considered the conjectural answer. It is known from  \cite{heuer-geometric-Simpson-Pic} and \cite{HX} that the moduli stack of vector bundles on $X_\infty$ holds the key to this question, but it is currently completely unclear what this looks like. Studying the Picard functor of $X_\infty$ is therefore an important first step in this direction.
		
		\item When instead $K$ has characteristic $p$, one particular case of interest is the perfection $X^\perf\sim\varprojlim_{F}X$ from \cite[III.2.18.(ii)]{torsion}. In this case, \Cref{q:Pic(X)->Pic(X_infty)-surj} becomes:
	\end{enumerate}
\end{Remark}

\begin{Question}\label{q:Pic(X)->Pic(X^perf)-kernel}
	Assume that $K$ is a perfectoid field of characteristic $p$ and let $X$ be a rigid space over $K$.
	As multiplication by $p$ is bijective on $\Pic(X^\perf)$, there is a natural map
	\begin{equation}\label{eq:q2}
		\Pic(X)\tf\to \Pic(X^\perf).
	\end{equation}
	What are its kernel and cokernel?
\end{Question}
	
	In characteristic $p$, there is for proper $X$ a fully faithful functor from $K$-linear representations of the \'etale fundamental group of $X$ to vector bundles on $X^\perf$. As in the $p$-adic case, it is therefore of interest to study the difference between the categories of vector bundles on $X$ and $X^\perf$, potentially leading to a kind of ``$t$-adic Simpson correspondence'' over $\F_p(\!(t)\!)$.
	
	\medskip

	While keeping these motivations in mind, we leave the specific applications to subsequent works, and will in this article focus on studying the above questions in their own right.

\begin{Remark}\label{r:previously-known}
The following has previously been known regarding Questions \ref{q:Pic(X)->Pic(X_infty)-surj} and \ref{q:Pic(X)->Pic(X^perf)-kernel}:
\begin{enumerate}
	\item One case that is well-understood is when the $X_i$ are all affinoid and $X_\infty$ is affinoid perfectoid: In this case, Kedlaya--Liu have shown in \cite{KedlayaLiu-II} that vector bundles on $X_\infty$ correspond to projective $\O(X_\infty)$-modules, and using a result of Gabber--Ramero (see \Cref{p:Gabber-Ramero-5.4.42}), one deduces that $\eqref{eq:q1}$ is an isomorphism in this case.
\item 
The case of $X=\P^n$ has been studied by Dorfsman-Hopkins, who showed in his thesis \cite{dorfsman2019projective} that $\eqref{eq:q2}$ is an isomorphism for $X=\P^n$, extending the case of $n=1$  previously treated by Das \cite{das2016vector}. Dorfsman-Hopkins--Ray--Wear have extended this to smooth projective toric varieties \cite{perfectoid_covers_toric}, where the map \eqref{eq:q2} is also an isomorphism.
\item In the specific setting of \Cref{rem:motivation}.2,  we had previously described in \cite{heuer-geometric-Simpson-Pic} the kernel of the map $\Pic(X)\to \Pic(X_\infty)$: When the rigid analytic Picard functor $\underline{\Pic}_{X}$ is representable, the kernel is the open subgroup of ``topologically torsion'' elements. In this article, we give a different perspective on this phenomenon and also describe $\Pic(X_\infty)$ as well as the Picard functor of $X_\infty$ in some first cases of interest.
\end{enumerate}
\end{Remark}

\subsection{The case of good reduction}

Our first main result answers Questions~\ref{q:Pic(X)->Pic(X_infty)-surj} and \ref{q:Pic(X)->Pic(X^perf)-kernel} in situations of ``good reduction'', i.e.\ when each $X_i$ is the generic fibre of a smooth formal scheme $\mathfrak X_i$ over $\O_K$.
Under additional assumptions which in practice are satisfied e.g.\ by ``universal covers'' such as those in \Cref{rem:motivation}.2, we show that the Picard group of $X_\infty$ can be described in terms of the special fibres of the $\mathfrak X_i$ over the residue field $k$ of $\O_K$:
\begin{Theorem}\label{t:intro-Pic(X_infty)}
			Let $(\mathfrak X_i)_{i\in I}$ be a cofiltered inverse system of smooth qcqs formal $\O_K$-schemes with affine transition maps. Assume that the adic generic fibre $X_\infty$ of $\varprojlim_{i\in I}\mathfrak X_i$ is perfectoid. Let $X_i$ be the generic fibre and $\overbar{X}_i$ the special fibre of $\mathfrak X_i$. Moreover:
		\begin{enumerate}[label=(\roman*)]
			\item If $\Char K=0$, assume that $H^j_{\an}(X_\infty,\O)=0$ for $j=1,2$.
			\item If $\Char K=p$, assume that $H^j_{\proet}(X_{\infty},1+\m\O^+)=0$ for $j=1,2$.
		\end{enumerate}
		Then the pullback $\Pic(X_i)\to \Pic(X_\infty)$ induces a natural  isomorphism
		\[\textstyle\Pic(X_\infty)[\tfrac{1}{p}]=\varinjlim_{i\in I} \Pic(\overbar{X}_i)[\tfrac{1}{p}].\]
\end{Theorem}
For example, if $\Char K=p$, we will explain that condition (ii) holds if the $\mathfrak X_i$ are all proper and the Frobenius endomorphism $F$ acts nilpotently on $H^j(X_{i},\O)=0$ for $j=1,2$. This allows us to answer Question~\ref{q:Pic(X)->Pic(X^perf)-kernel} for ``supersingular'' varieties in the sense of  \cite[\S0]{FakhruddinSupersingular}, i.e.\ if  all Newton polygons of the crystalline cohomology of $X$ have constant slope:

\begin{Corollary}\label{c:Pic(X^perf)-for-X-with-trivial-cohom}
	Let $ X$ be a geometrically connected smooth proper rigid space over a perfectoid field $K$ of characteristic $p$ with good reduction $\overbar{X}$. If $F$ acts nilpotently on $H^j(X,\O)$ for $j=1,2$, then
	\[\Pic( X^\perf)=\Pic(\overbar X)\tf.\]
\end{Corollary}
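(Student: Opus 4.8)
The plan is to exhibit $X^\perf$ as a tilde-limit of the form treated in Theorem~\ref{t:intro-Pic(X_infty)} and then to evaluate the resulting colimit. Let $\mathfrak X$ be the smooth proper formal $\O_K$-model of $X$ with special fibre $\overbar X$, and take the constant inverse system $\mathfrak X_i = \mathfrak X$ ($i \in \N$) whose transition maps are the relative Frobenius $F$. Since $\O_K$ is perfect, $F$ is finite and surjective, so this system meets the hypotheses of the theorem; its generic fibre is the perfectoid tilde-limit $X^\perf \sim \varprojlim_F X$ of \cite[III.2.18.(ii)]{torsion}, and its special fibres form the system $\overbar X \xleftarrow{F} \overbar X \xleftarrow{F} \cdots$.

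First I would verify hypothesis (ii), namely $H^j_\proet(X^\perf, 1 + \m\O^+) = 0$ for $j = 1, 2$. The strategy is to combine the tilde-limit with the multiplicative Hodge--Tate spectral sequence for $\O^\times$ constructed in this paper: the former expresses the cohomology of $X^\perf$ as a Frobenius-colimit of cohomology groups on $X$, and the latter, on the perfectoid $X^\perf$ where the higher Hodge--Tate weights collapse, reduces this to $\varinjlim_F H^j(X, \O)$, the additive ``logarithm'' of the multiplicative sheaf $1 + \m\O^+$ surviving in characteristic $p$. Under these identifications the transition maps become Frobenius pullback, so by assumption $F$ is nilpotent on $H^j(X, \O)$ for $j = 1, 2$, say $F^N = 0$; hence every class dies after $N$ transition maps and the colimit vanishes. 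Making this chain of identifications precise --- in particular the passage from the multiplicative sheaf on the perfectoid $X^\perf$ to additive coherent cohomology, which relies on perfectness and the identity $(1+x)^p = 1 + x^p$ --- is the main obstacle, and properness of $\mathfrak X$ enters through finiteness of coherent cohomology and flat base change between $H^j(X,\O)$ and $H^j(\overbar X, \O)$.

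With (ii) in hand, Theorem~\ref{t:intro-Pic(X_infty)} gives a natural isomorphism
\[\Pic(X^\perf)\tf = \varinjlim_F \Pic(\overbar X)\tf.\]
To finish I would evaluate both sides. On the left, multiplication by $p$ is bijective on $\Pic(X^\perf)$ (Question~\ref{q:Pic(X)->Pic(X^perf)-kernel}), since $X^\perf$ is perfect and Frobenius acts as $\times p$; thus $\Pic(X^\perf)\tf = \Pic(X^\perf)$. On the right, each transition map is pullback along the Frobenius of $\overbar X$, which sends a line bundle $L$ to $F^*L \cong L^{\otimes p}$ and is therefore multiplication by $p$; after inverting $p$ these maps are isomorphisms, so $\varinjlim_F \Pic(\overbar X)\tf = \Pic(\overbar X)\tf$. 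Chaining the three identifications yields $\Pic(X^\perf) = \Pic(\overbar X)\tf$, as desired.
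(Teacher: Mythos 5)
Your skeleton is the same as the paper's: take the constant tower $\mathfrak X\xleftarrow{F}\mathfrak X\xleftarrow{F}\cdots$, apply Theorem~\ref{t:intro-Pic(X_infty)}, and evaluate both sides of $\Pic(X^\perf)\tf=\varinjlim_F\Pic(\overbar X)\tf$ using that Frobenius pullback is multiplication by $p$ on $\Pic(\overbar X)$ and that $\Pic(X^\perf)$ is uniquely $p$-divisible; this last evaluation is correct, and you make it more explicit than the paper does. The genuine gap is in your verification of hypothesis (ii), which is the actual content of the corollary.

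The mechanism you propose for (ii) cannot work as described. In characteristic $p$ there is no additive logarithm from $1+\m\O^+$ to $\O$: the identity $(1+x)^p=1+x^p$ gives no such map, and one cannot divide by $p$ to make sense of $\log(1+x)$. The logarithm that does exist on $X^\perf_{\proet}$ (the logarithm sequence in \S2.3, which is what the proof of Theorem~\ref{t:Pic(wt B)-vs-Pic-special-fibre} uses internally) is
\[ 0\to \Q_p(1)\to 1+\m\O^+\xrightarrow{\log\circ\sharp}\O^\sharp\to 0, \]
whose target is the structure sheaf of a characteristic-$0$ untilt, not $\O$, and whose kernel is a $\Q_p(1)$. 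Nor does the multiplicative Hodge--Tate spectral sequence help here: the coherent description $R^i\nu_{\ast}\O^\times_v\cong\Omega^i_{X_{\et}}\{-i\}$ in Theorem~\ref{t:v-Brauer} is a characteristic-$0$ statement for smooth rigid spaces, while for perfectoid spaces the theorem only says $R\nu_{\ast}\O^\times_v=\O^\times_{\et}$, which says nothing about $1+\m\O^+$. Consequently, proving $H^j_{\proet}(X^\perf,1+\m\O^+)=0$ for $j=1,2$ requires two separate inputs: (a) $H^j(X^\perf_C,\O^\sharp)=0$, a vanishing of \emph{untilted} coherent cohomology, which is related to your computation $\varinjlim_F H^j(X,\O)=0$ only through an almost-mathematics/tilting comparison (together with $\O^+(X^\perf_C)\aeq\O_C$ to descend from $C$ to $K$); and (b) $H^j(X^\perf_C,\Q_p)=0$, an \'etale-cohomological vanishing that your proposal never addresses and that no coherent computation yields --- it follows from nilpotence of $F$ via the Artin--Schreier sequence, since $F-1$ is invertible on $H^j(X,\O)$ when $F$ is nilpotent. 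The paper obtains exactly these two statements by citing \cite[\S1]{heuer-trivialised-by-perf} and then invokes the sufficient-condition clause of Theorem~\ref{t:Pic(wt B)-vs-Pic-special-fibre}(ii). So the step you flag as ``the main obstacle'' is not a technicality to be tightened but a missing argument, and the route you sketch through it --- reduction to $\varinjlim_F H^j(X,\O)$ alone --- would in particular lose the $\Q_p(1)$-contribution entirely.
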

Note that the Theorem and its Corollary generalise all known cases mentioned in Remark~\ref{r:previously-known}.2.
For example, from our perspective, the reason that $\Pic(\P^{n,\perf})=\Pic(\P^n)\tf$ lies in the fact that $\P^n$ has the same Picard group as its reduction over the residue field.

\medskip

Second, we also give a useful criterion for when the map in \Cref{q:Pic(X)->Pic(X_infty)-surj} is in fact bijective:
\begin{Theorem}\label{t:Picard-bij-intro}
	Let $K$ be a perfectoid field extension of $\Q_p$.
	Let $X_\infty\sim \varprojlim_{i\in I} X_i$ be a perfectoid tilde-limit of qcqs smooth rigid spaces with affinoid transition maps.  Suppose that
	\[\textstyle\varinjlim_{i\in I} H^j_\an(X_i,\O)\to H^j_\an(X_\infty,\O)\]
	is an isomorphism for $j=0,1,2$. Then also the following map is an isomorphism:
	\[\textstyle \varinjlim_{i\in I} \Pic(X_i)\isomarrow \Pic(X_\infty).\]
\end{Theorem}
Together with \Cref{t:intro-Pic(X_infty)}, this indicates that in general, the answer to \Cref{q:Pic(X)->Pic(X_infty)-surj} can be found by looking at the cohomology of $\O$, which is often much easier to understand.
\subsection{The multiplicative Hodge--Tate sequence}
For the proof of Theorems~\ref{t:intro-Pic(X_infty)} and \ref{t:Picard-bij-intro}, we study a certain sheaf $\bOx$ on the $v$-site of rigid and perfectoid spaces. This appeared already in \cite{heuer-v_lb_rigid}, but we now clarify that in some precise sense, it ``computes line bundles on the special fibre''. We use this to prove some cohomological results of independent interest. For example, Scholze has pointed out to us that these can be used to see the following:
\begin{Theorem}\label{t:v-Brauer-intro}
	\begin{enumerate}
	\item If $X$ is a perfectoid space over $K$, then for the map  $\nu:X_{v}\to X_{\et}$, \[R\nu_{\ast}\O^\times_v=\O^\times_{\et}.\]
	\item If $X$ is a smooth rigid space over $K$ with $\Char K=0$, then $\nu_{\ast}\O^\times_v=\O^\times_{\et}$ and 
	\[ R^i\nu_{\ast}\O^\times_v= \Omega_{X_{\et}}^i\{-i\} \quad \text{ for } i\geq 1.\]
	\end{enumerate}
\end{Theorem}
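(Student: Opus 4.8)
The plan is to prove both parts by comparing $v$- and étale cohomology of the multiplicative sheaf, reducing everything to almost-acyclicity on affinoid perfectoid spaces and, in the smooth case, descending along a perfectoid toric cover. The basic tool throughout is the short exact sequence of $v$-sheaves
\[ 1 \to \U \to \O^\times \to \bOx \to 1, \qquad \U = 1 + \m\O^+, \]
which separates the infinitesimal multiplicative part $\U$ from the reduction part $\bOx$. Since $R\nu_*\O^\times_v=\O^\times_{\et}$ is equivalent to the natural maps $H^i_{\et}(U,\O^\times)\to H^i_v(U,\O^\times)$ being isomorphisms for $U$ ranging over a basis of affinoid perfectoids, I would reduce Part (1) to this comparison on each of $\U$ and $\bOx$ and conclude via the five lemma applied to the two long exact sequences.

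For $\U$ I would use the filtration by the subsheaves $1+\m^n\O^+$, whose successive quotients are the additive sheaves $\m^n\O^+/\m^{n+1}\O^+$ via $1+x\mapsto x$. This reduces the higher cohomology of $\U$ to that of $\O^+$, where I invoke the (almost) acyclicity $H^i_v(U,\O^+)\aeq 0$ for $i\ge1$ on affinoid perfectoid $U$ (Scholze, Kedlaya--Liu); a derived-limit (\Rlim) argument then recovers $\U$ from the filtration, followed by a careful passage from almost- to honest statements using the multiplicative structure. Crucially, this filtration argument is uniform in the characteristic and avoids the logarithm, which is essential since $\log$ does not converge when $\Char K=p$ and Part (1) is unconditional. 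For $\bOx$, which depends only on the tilted reduction and the value group, I would argue that it is pulled back along specialisation from a site on which $v$- and étale covers induce the same covers, so that its higher $v$- and étale cohomology agree. Combining the two cases yields Part (1), including the vanishing of the exotic ``$v$-Brauer'' classes in degrees $i\ge2$.

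For Part (2) I would localise on $X_{\et}$ and choose an étale map to a torus, passing to the perfectoid tower $X_\infty\sim\varprojlim X_m$ obtained by adjoining $p$-power roots of the coordinates, with Galois group $\Gamma\cong\Z_p(1)^n$. Part (1) applies on $X_\infty$, so the descent (Cartan--Leray) spectral sequence identifies $R\nu_*\O^\times_v$ with the continuous $\Gamma$-cohomology of $\O^\times(X_\infty)$. Running the exact sequence above in parallel with the additive Hodge--Tate comparison $R^i\nu_*\O_v=\Omega^i_{X_{\et}}\{-i\}$ (valid since $\Char K=0$): the part $\bOx$ contributes only in degree $0$ as in Part (1), while on $\U$ the $p$-adic logarithm gives $\U\cong\m\O^+$, so $R^i\nu_*\U$ is computed by $R^i\nu_*\O^+\tf=\Omega^i\{-i\}$. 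The degree-one generators are the classes $d\log T_j$, and wedging them yields $R^i\nu_*\O^\times_v=\Omega^i_{X_{\et}}\{-i\}$; the Tate twist $\{-i\}$ records the action of $\Gamma$ on the $p$-power roots of unity in the tower.

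The main obstacle in both parts is the multiplicative-to-additive comparison, i.e.\ turning almost-acyclicity of $\O^+$ into honest statements for $\O^\times$. The delicate points are the treatment of the value-group and residue contributions encapsulated in $\bOx$, proving that these carry no higher $v$-cohomology (this is exactly what secures the Brauer degrees $i\ge2$ in Part (1)), and, in Part (2), the bookkeeping of the $\Gamma$-module structure of $\O^\times(X_\infty)$ --- splitting it into the logarithm of $1$-units, the monomials $T^a$ for $a\in\Z\tf^n$, and constants --- so that the continuous group cohomology assembles precisely into $\Omega^i\{-i\}$ with the correct twist.
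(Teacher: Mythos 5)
Your decomposition of $\O^\times$ into the unit part $\U=1+\m\O^+$ and the reduction part $\bOx$ is the same one the paper uses, but at both of the places where the real work happens your proposal has a genuine gap.

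For $\U$: first, the filtration ``$1+\m^n\O^+$'' is constant, because $\m^2=\m$ for a perfectoid field (the value group is dense), so you must mean $1+\varpi^n\O^+$ for a pseudo-uniformiser $\varpi$. With that fix, the graded pieces are copies of $\O^+/\varpi$, whose higher cohomology on affinoid perfectoids is only \emph{almost} zero, and your argument stops exactly at the crux: the promised ``careful passage from almost- to honest statements using the multiplicative structure'' is not supplied, and it is not a formality. The groups $H^i_v\big(U,(1+\varpi\O^+)/(1+\varpi^n\O^+)\big)$ carry no useful $\O_K$-module structure, so the almost-zero errors coming from the graded pieces cannot simply be discarded; the rescaling trick that handles this (morphisms of extensions with $\cdot\varpi^\delta$ on the kernels, as in the proof of Lemma~\ref{l:bOx-in-arbitrary-char}) is only carried out in the paper in degrees $0$ and $1$. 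For all degrees the paper does something different: it inverts $p$, using the logarithm sequence $0\to\Q_p(1)\to\U\to\O^\sharp\to 0$ in characteristic $p$ and the tilting sequence $0\to\Z_p(1)\to\O^{\times\flat}_1\to\U\to 1$ in characteristic $0$ (Lemma~\ref{l:1+mO-acyclic-on-aff-perf}), thereby reducing to $H^j(X,\O^\sharp)$ and $H^j(X,\Q_p)$, which vanish \emph{honestly} in the relevant degrees by Artin--Schreier plus repleteness. Your stated reason for avoiding the logarithm --- that it does not converge when $\Char K=p$ --- is a misconception: the paper uses the untilting logarithm $\log\sharp$, which is perfectly well defined in characteristic $p$; only the naive series fails.

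For $\bOx$: asserting that it ``is pulled back along specialisation from a site on which $v$- and \'etale covers induce the same covers'' is not an argument --- no such site is constructed, and none is available off the shelf. What is actually needed is $H^j_v(X,\bOx)=1$ for $j\geq 1$ when $X$ is strictly totally disconnected, and this is the technical core of the paper's proof: a trivialising affinoid perfectoid $v$-cover $Y\to X$ is written as a tilde-limit of open subsets $Y_i$ of relative balls $\B^n_X$; Proposition~\ref{p:cohom-comparison} (cohomology of $\bOx$ commutes with such limits) descends the class to some $Y_i$; and Lemma~\ref{l:local-splitting-over-stds} (a surjection onto a strictly totally disconnected space from an open of a relative ball splits) then kills it. This adapts \cite[Proposition~14.7]{etale-cohomology-of-diamonds}; your intuition that $\bOx$ is a ``special-fibre'' sheaf for which $v$- and \'etale cohomology agree is precisely the statement to be proved, not something that can be assumed. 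Finally, for Part (2), your Cartan--Leray computation on toric towers is far heavier than the paper's argument (exponential sequence, then Kummer with $R\nu_*\mu_{p^\infty}=\mu_{p^\infty}$ to remove the $\tf$, then the known additive comparison $R^i\nu_*\O=\Omega^i_{X_{\et}}\{-i\}$), and as written it contains an error: the logarithm does not give $\U\cong\m\O^+$, since $\mu_{p^\infty}\subseteq\U$ lies in its kernel; it also silently requires vanishing of $\Pic$ and of higher $H^j_{\et}(-,\O^\times)$ of the perfectoid torus before $R\nu_*\O^\times$ can be identified with continuous $\Gamma$-cohomology of $\O^\times(X_\infty)$.
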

The first part could be regarded as a perfectoid analogue of  Grothendieck's result  that for a scheme $X$, we have $H^i_{\et}(X,\G_m)=H^i_{\mathrm{fppf}}(X,\G_m)$ for $i\geq 0$ \cite[Theorem~11.7]{Grothendieck_BrauerIII}.

The $\{-i\}$ in part 2 is a Breuil--Kisin--Fargues twist. The Leray spectral sequence then produces what one could describe as a ``multiplicative Hodge--Tate spectral sequence''
\[ E_2^{ij}:=\begin{dcases}\begin{rcases}
		H^i_{\et}(X,\O^\times)& \text{ if }j=0\\
		H^i_{\et}(X,\Omega_X^j\{-j\})& \text{ if }j>0	
\end{rcases}\end{dcases}\Rightarrow H^{i+j}_{v}(X,\O^\times).
\]

\subsection{Abelian varieties}
As the second main new case of Question~\ref{q:Pic(X)->Pic(X_infty)-surj}, we apply Theorem~\ref{t:intro-Pic(X_infty)} to $X_i=B$ an abelian variety with good reduction $\overbar{B}$ and its $p$-adic universal cover 
\[\textstyle\wt B:=\varprojlim_{[p]}B.\] If $K$ is algebraically closed, this is a perfectoid space \cite{perfectoid-covers-Arizona}. Theorem~\ref{t:Pic-functor-for-wtB-intro} then implies that
\[ \Pic(\wt B)=\Pic(\overbar{B})\tf.\]

Via the rigid Albanese varieties of Hansen--Li \cite[\S4]{HansenLi_HodgeSymmetry}, one can deduce in great generality that $p$-adically close line bundles on proper rigid spaces of good reduction become isomorphic on large pro-finite covers, i.e.\ the kernel of \eqref{eq:q1} contains an open subgroup for such covers.
\subsection{Picard functors}
In rigid analytic geometry, the question of representability of Picard functors has a long history shaping the subject, see \cite[\S1]{heuer-diamantine-Picard} for an introduction and overview.

For proper smooth rigid spaces $X$ in characteristic $0$, we showed in  \cite[Theorem~1.1]{heuer-diamantine-Picard} that the  Picard functor of $X$ defined on perfectoid test objects is the diamondification of the rigid Picard functor. We used this to characterise line bundles on $X$ that are trivialised by the universal cover $X_\infty$. In the case of good reduction, we can now complement these results by describing the Picard functor of  the universal cover $X_\infty$ itself.

Namely, we deduce from our results that a relative version of Theorem~\ref{t:intro-Pic(X_infty)} holds if the $\mathfrak X_i$ are proper: Let $\pi:X_\infty\to \Spa(K)$ be the structure map and consider the Picard functor \[\uP_{X_\infty}:=R^1\pi_{\et}\G_m:\Perf_{K,\et}\to \mathrm{Ab},\quad T\mapsto \Pic(T\times X_\infty)/\Pic(T)\] of $X_\infty$ defined on perfectoid test objects. Then one can make precise the idea that this is represented by the colimit of the (usual algebraic) Picard varieties of the special fibres of the $X_i$ over $k$. For example, in the case of abeloids, this means the following:
\begin{Theorem}\label{t:Pic-functor-for-wtB-intro}
	Let $B$ be an abeloid variety with good reduction. Let $\wt B=\varprojlim_{[p]}B$ be its $p$-adic universal cover. Let $\overbar{B}$ be the special fibre over $k$ and let $\uP_{\overbar{B}}$ be its Picard variety over $k$. Then there is a natural isomorphism of $v$-sheaves
	\[ \uP_{\wt B}=(\uP_{\overbar{B}})^\diamond\tf,\]
	where $(\uP_{\overbar{B}})^\diamond$ is the sheafification of the functor sending $(S,S^+)\in \Perf_K$ to $\uP_{\overbar{B}}(S^+/\mathfrak m)$.
\end{Theorem}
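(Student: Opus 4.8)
The plan is to derive this as the relative, sheaf-theoretic incarnation of Theorem~\ref{t:intro-Pic(X_infty)}, applied to the tower
\[
\mathfrak B \xleftarrow{[p]} \mathfrak B \xleftarrow{[p]} \cdots,
\]
where $\mathfrak B$ is the abelian formal $\O_K$-scheme with $\mathfrak B_\eta = B$ and special fibre $\overbar B$. Since both sides are $v$-sheaves on $\Perf_K$, it suffices to construct a natural transformation and to check that it is an isomorphism on sections over each affinoid perfectoid $(S,S^+)$. Fixing such an $(S,S^+)$, I base change to the smooth formal $S^+$-scheme $\mathfrak B_{S^+}=\mathfrak B\hotimes_{\O_K}S^+$, whose generic fibre is $B_S$ and whose special fibre is $\overbar B\times_k (S^+/\m)$; the $[p]$-tower then provides a formal model of the perfectoid space $\wt B_S=\wt B\hotimes_K S$.

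The natural transformation will be the reduction-to-the-special-fibre map supplied by the sheaf $\bOx$, which by construction computes line bundles on special fibres. Concretely, the map $\O^\times\to\bOx$ induces
\[
\uP_{\wt B}=R^1\pi_{\et}\O^\times \longrightarrow R^1\pi_{\et}\bOx,
\]
and on $(S,S^+)$ the target is $H^1(\wt B_S,\bOx)=\varinjlim_{[p]^\ast}\Pic\big(\overbar B\times_k(S^+/\m)\big)$. Passing to the identity component (the Picard variety) and inverting $p$, I will identify this with $\uP_{\overbar B}(S^+/\m)\tf$: here the colimit collapses because $[p]^\ast$ acts on $\uP_{\overbar B}=\overbar B^\vee$ as multiplication by $p$, which is invertible after $\tf$.

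To invoke the relative form of Theorem~\ref{t:intro-Pic(X_infty)} I must verify its cohomological hypotheses for $\wt B_S$. The key input is that $[p]^\ast$ acts as multiplication by $p^j$ on $H^j(\mathfrak B_{S^+},\O)$: if $\Char K=0$ the transition maps therefore vanish modulo $p$, so the $p$-completed colimit computing $H^j_{\an}(\wt B_S,\O)$ is zero for $j=1,2$, giving hypothesis (i); if $\Char K=p$ one has $d[p]=0$, so $[p]^\ast$ is nilpotent on coherent cohomology and the analogous colimit for $1+\m\O^+$ vanishes, giving (ii). With the hypotheses in hand, Theorem~\ref{t:intro-Pic(X_infty)} (relativised) yields $\Pic(\wt B_S)\tf\isomarrow\varinjlim_{[p]^\ast}\Pic(\overbar B\times_k(S^+/\m))\tf$, which combined with the previous paragraph produces the desired isomorphism on $(S,S^+)$.

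The main obstacle is the relativisation itself: I must run the proof of Theorem~\ref{t:intro-Pic(X_infty)} over the perfectoid base $S^+$ rather than over $\O_K$, checking that the comparison $R^1\pi_{\et}\bOx\cong$ (Picard of the special fibre) and the intervening vanishing results hold in families, and I must pass from the absolute group $\Pic(\wt B_S)$ to the sheafified relative functor $\uP_{\wt B}(S)$. This last step requires controlling both the pullback contribution from $\Pic(S)$, via the Leray spectral sequence for $\pi$ together with Theorem~\ref{t:v-Brauer-intro}, and the Néron--Severi part, so that only the connected Picard variety survives the identification; I expect the bookkeeping of the component-group contribution across the $[p]$-tower to be the most delicate point. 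Naturality in $(S,S^+)$ is then formal, and since both sides are $v$-sheaves, the resulting isomorphism on affinoid perfectoid test objects upgrades to the claimed isomorphism of $v$-sheaves.
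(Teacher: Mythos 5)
Your overall plan --- relativise Theorem~\ref{t:Pic(wt B)-vs-Pic-special-fibre} and apply it to the $[p]$-tower of the formal model $\mathfrak B$ --- is the same as the paper's, but the step you defer as ``the main obstacle'' is not a technicality to be checked afterwards: it is the entire content of the paper's proof, and the mechanism you propose for it would fail. You assert that on $(S,S^+)$ the sheaf $R^1\pi_{\et\ast}\bOx$ evaluates to $\varinjlim_{[p]^{\ast}}\Pic\big(\overbar B\times_k(S^+/\m)\big)$ ``by construction''. It does not: the statement that $\bOx$ computes Picard groups of special fibres is Proposition~\ref{p:bOx-computes-line-bundles-on-special-fibre}, and its proof (via Lemma~\ref{l:BL8-11} and the sup-norm sequence of Lemma~\ref{l:barO-computes-line-bundles-on-special-fibre}) is specific to smooth formal schemes over the valuation ring $\O_K$ of a field: it uses integrality of the special fibre of a small affine, the value group of $K$, and irreducibility. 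None of this is available for $\mathfrak B\hotimes_{\O_K}S^+$ over a general perfectoid $S^+$, so ``running the proof of the good-reduction theorem over the base $S^+$'' is not a viable relativisation. The paper never leaves the base $\O_K$: it approximates the test object $Y=\Spa(S,S^+)$ by smooth affinoid rigid spaces $T$ over $K$, works with $X_i\times T$ and the formal models $\mathfrak X_i\times\mathfrak T$ over $\O_K$, and the heart of the argument is Proposition~\ref{p:base-change-morph-for-bOx}, that the base-change map $\lambda^{\ast}R^1\pi_{\ZAR\ast}\bOpx\to R^1\pi_{\AN\ast}\bOx$ is an isomorphism; this in turn needs representability of $\uP_{\overbar X}$ over $k$, reducedness of $\overbar T$ via the Reduced Fibre Theorem, the specialisation results (Lemma~\ref{l:specialisation-for-bOx-in-products}), the factorisation of the exponential boundary map over connected reduced $T$ (Lemma~\ref{l:factorisation-of-boundary-map}), and the relative vanishing Lemma~\ref{l:rel-version-of-condition-(i)}. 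Nothing in your proposal substitutes for these inputs. Relatedly, your plan to verify the map ``on sections over each $(S,S^+)$'' cannot be carried out directly: $\uP_{\wt B}$ is a sheafification, and its sections over $(S,S^+)$ are not $\Pic(\wt B\times S)$ modulo anything you control a priori; the paper only ever proves identifications after sheafification.

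Second, your reduction to the identity component is wrong. In the statement, $\uP_{\overbar B}$ is the full Picard scheme of $\overbar B$ (what the paper calls its ``Picard variety''), not $\overbar B^\vee$. The N\'eron--Severi part does not die across the tower: by the theorem of the cube (Lemma~\ref{l:colim[p]=colim p* on Pics}), $[p]^{\ast}$ acts on $\NS(\overbar B)$ by $p^{2}$, which is invertible after $\tf$, so $\varinjlim_{[p]^{\ast}}\Pic(\overbar B)\tf=\Pic(\overbar B)\tf$ with component group $\NS(\overbar B)\tf$ intact --- this is visible in Corollary~\ref{c:Picard-functor-of-universal-cover}, whose exact sequence ends in $\NS(\overbar B)\tf/\NS(B)$. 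Your expectation that ``only the connected Picard variety survives'' would produce $(\overbar B^\vee)^\diamond\tf$, a strictly smaller sheaf than the correct answer. Two further gaps: your argument at best yields $\uP_{\wt B}\tf$ on the left-hand side, and removing the $\tf$ is a separate step (the paper proves $R^j\pi_{\ast}\mu_{p^\infty}=0$ for $j=1,2$, whence $\uP_{\wt B}=\uP_{\wt B}\tf$); and in characteristic $p$ condition (ii) of Theorem~\ref{t:Pic(wt B)-vs-Pic-special-fibre} also requires vanishing of $H^j(\wt B_C,\Q_p)$, which needs the \'etale cohomology computation of Lemma~\ref{l:et-cohom-of-abeloids} rather than only a coherent-cohomology argument, while in characteristic $0$ your colimit argument must be run integrally (on rational coherent cohomology multiplication by $p^j$ is invertible, so the rational colimit does not vanish).
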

This is the first instance where the ``Picard variety'' of a perfectoid object in the pro-\'etale site can be described explicitly. We also give a version of  \Cref{t:Picard-bij-intro} for Picard functors (\Cref{t:Picard-bij} below).
Together, these
give a first idea of what kind of geometric objects represent Picard functors of perfectoid spaces that are limits of proper rigid spaces.

\medskip

Finally, in Section 6, we use our results to answer a few general open questions on line bundles on perfectoid covers that have been asked in the literature. For example, the case of abelian varieties shows that the morphism \eqref{eq:q2} is typically neither injective nor surjective, and that Picard groups of perfectoid spaces are not always $p$-divisible.

\subsection*{Acknowledgements}
We would like to thank Peter Scholze for very helpful discussions and comments on an earlier version,   in particular for suggesting Theorem~\ref{t:v-Brauer} and its proof and for suggesting the argument for Lemma~\ref{l:bOx-p-divisible} in characteristic $p$.
 We thank Urs Hartl and Werner L\"utkebohmert for explaining to us how to prove Lemma~\ref{l:BL8-11} for general non-archimedean base fields, and we thank Ian Gleason and Peter Scholze for both pointing out an inaccuracy in an earlier version of Lemma~\ref{l:sheaf-assoc-to-special-fibre}.
 
 We moreover thank Johannes Ansch\"utz,  Gabriel Dorfsman-Hopkins, David Hansen, Lucas Mann, Peter Wear and Annette Werner for very helpful conversations. 

This work was funded by the Deutsche Forschungsgemeinschaft (DFG, German Research Foundation) under Germany's Excellence Strategy-- EXC-2047/1 -- 390685813. The author was supported by the DFG via the Leibniz-Preis of Peter Scholze.

\section{The sheaf $\bOx$}
Let $K$ be a perfectoid field of residue characteristic $p$, as in the introduction. We denote by $\O_K$ the ring of integers, $\mathfrak m$ the maximal ideal, $\varpi\in \mathfrak m$ some pseudo-uniformiser, and $k$ the residue field. In this section, we will sometimes relax the condition on $K$ and allow it to be any non-archimedean field over $\Z_p$. We will indicate this in the beginning of the subsection.
\subsection{Recollections on line bundles in various topologies}
In the following, we will freely work with diamonds in the sense of \cite[\S11]{etale-cohomology-of-diamonds}\cite[\S8]{ScholzeBerkeleyLectureNotes}.
We will almost exclusively deal with diamonds $X$ that arise from an analytic adic space over a perfectoid field $K$. In this situation, we have a better handle over the structure sheaf $\O$ on $X$, and thus also on the sheaf of units $\O^\times$ which we need to describe line bundles. 

Recall that analytic adic spaces can be regarded as diamonds via the ``diamondification''
\[ \{\text{analytic adic spaces over }K\}\to \LSD_K,\quad X\mapsto X^\diamond\]
of \cite[\S15]{etale-cohomology-of-diamonds},
where $\LSD_K$ denotes the category of locally spatial diamonds over $\Spd(K)$. Due to the fixed structure map to $K$, we can regard $X^\diamondsuit$ as a sheaf on $\Perf_{K,v}$, the site of perfectoid algebras over $K$ with the $v$-topology. On this, we can simply describe $X^\diamond$ as
\[ X^{\diamond}:(S,S^+)\mapsto X(S,S^+).\]

If  $K$ is of characteristic $0$, then the diamondification functor is fully faithful on the subcategory of semi-normal rigid spaces over $K$. This follows from a result of Kedlaya--Liu, see \cite[Proposition~10.2.3]{ScholzeBerkeleyLectureNotes}\cite[Theorem 8.2.3]{KedlayaLiu-II}.

In characteristic $p$, the functor instead sends a rigid space $X$ to its perfection
\[X^\perf:=\textstyle\varprojlim_FX,\]
where the limit is taken in $v$-sheaves. In particular, it is in general far from fully faithful.

Let now $X$ be an analytic adic space over $K$. We will usually assume that $X$ is either a perfectoid space or a rigid space over $K$.
We consider the analytic site $X_{\an}$ and the (small) \'etale site $X_{\et}$ in the sense of \cite[Definition 8.2.19]{KedlayaLiu-II}. These are related by a morphism 
\[ r:X_{\et}\to X_{\an}.\]
By \cite[Lemma 15.6]{etale-cohomology-of-diamonds}, diamondification induces an equivalence
\[X_{\et}=X^\diamond_{\et}\]
that allows us to freely pass back and forth between $X$ and its associated diamond: This is harmless as long as we remember that if $X$ is a rigid space of characteristic $p$, or one of characteristic $0$ that fails to be semi-normal, this changes the structure sheaf $\O$.

If $X$ is a perfectoid space, we also consider the $v$-site $X_v$ and the pro-\'etale site $X_{\proet}$ from \cite[Definition 8.1]{etale-cohomology-of-diamonds}. Let
\[ \nu:X_{\proet}\to X_{\et}\]
be the natural morphism of sites.

We now briefly recall how the notions of line bundles (i.e.\ of torsors under $\O^\times$) are related for these different topologies. We refer to \cite[\S2]{heuer-v_lb_rigid} for a more detailed discussion.

If $X$ is a rigid space, then by \'etale descent, the category of line bundles on $X_{\et}$ is equivalent to that on $X_{\an}$. On perfectoid spaces, Kedlaya--Liu prove a stronger statement:
\begin{Theorem}[{\cite[Theorem 3.5.8]{KedlayaLiu-II}, \cite[Lemma 17.1.8]{ScholzeBerkeleyLectureNotes}}]\label{t:KL-line-bundles}
	Let $X$ be a perfectoid space over $K$. Any line bundle on $X$ for the $v$-topology is already locally trivial in the analytic topology.
\end{Theorem}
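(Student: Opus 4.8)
The plan is to reduce the statement to a descent result for finite projective modules over affinoid perfectoid algebras. Being locally trivial in the analytic topology is itself an analytic-local condition, and every perfectoid space is analytically covered by affinoid perfectoid spaces, so I would first reduce to the case $X=\Spa(R,R^+)$ affinoid perfectoid. On such $X$ the analytic line bundles are exactly the finite projective $R$-modules of rank one (the perfectoid analogue of Kiehl's Theorem~A, also due to Kedlaya--Liu), and these are by definition analytically locally trivial. Hence it suffices to show that every $\O^\times_v$-torsor on $X_v$ arises from a finite projective rank-one $R$-module; equivalently, that finite projective modules satisfy $v$-descent on affinoid perfectoid spaces, so that the category of $v$-line bundles is equivalent to that of analytic line bundles.

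Second, I would establish the key acyclicity input: for $X=\Spa(R,R^+)$ affinoid perfectoid one has $\RGamma_v(X,\O)=R$, i.e.\ $H^0_v(X,\O)=R$ and $H^i_v(X,\O)=0$ for $i>0$. This is the $v$-topological upgrade of the almost acyclicity of $\O^+$ from the theory of perfectoid spaces: one checks it first for pro-\'etale covers via the almost purity theorem, and then passes to arbitrary $v$-covers, using that the relevant \v{C}ech complex of $\O^+$ is almost exact and that every $v$-cover of an affinoid perfectoid is suitably refined. Equivalently, one simply invokes the fact that $\O$ is a $v$-sheaf with vanishing higher $v$-cohomology on affinoid perfectoids.

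Third, I would use this acyclicity to run faithfully flat descent for vector bundles along a $v$-cover. Given a $v$-line bundle $L$ trivialised over an affinoid perfectoid $v$-cover $Y=\Spa(S,S^+)\to X$, the torsor is encoded by a $1$-cocycle, i.e.\ by a unit in $S\hotimes_R S$ satisfying the cocycle condition; equivalently, by a descent datum for a rank-one finite projective $S$-module relative to $R\to S$. The acyclicity of $\O$ forces the \v{C}ech-to-derived comparison to degenerate, so that such descent data are effective and produce a finite projective rank-one $R$-module $M$ with $M\otimes_R S$ recovering the trivialisation. That $M$ is finite projective, and not merely finitely presented, follows because finite-projectivity can be tested after the faithfully flat $v$-cover, using that $\O^+$ behaves almost faithfully flatly in the perfectoid setting.

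The main obstacle is the third step: descending finite-projectivity along a \emph{general} $v$-cover. A $v$-cover is far weaker than a faithfully flat map of rings in the classical sense, so ordinary fpqc descent does not apply directly. The resolution is genuinely perfectoid: it rests on the almost purity theorem and the almost faithful flatness of $\O^+$ along $v$-covers, which together force the descended module to be finite projective after inverting the pseudo-uniformiser $\varpi$, the passage to $\O[\tfrac1\varpi]=\O$ removing the almost-ambiguity. Once this is in place, restricting to rank one and recalling that finite projective rank-one $R$-modules are analytically locally free completes the proof.
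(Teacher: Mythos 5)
A preliminary remark: the paper does not prove this statement at all --- it is quoted verbatim as Theorem~3.5.8 of Kedlaya--Liu --- so your proposal has to be measured against the argument in the cited literature rather than against anything in the text. Your skeleton is the standard one and matches that argument in outline: reduce to $X=\Spa(R,R^+)$ affinoid perfectoid, identify analytic line bundles with rank-one finite projective $R$-modules (the perfectoid Kiehl Theorem~A, Theorem~\ref{t:KL-Kiehl-A}), feed in the $v$-acyclicity of $\O$ on affinoid perfectoid spaces (which is indeed available, cf.\ the results quoted from \cite[Propositions 8.5, 8.8]{etale-cohomology-of-diamonds}), and conclude that $v$-descent data for line bundles are effective.

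The genuine gap is in your third step, exactly where you yourself locate the main difficulty. There is no statement of the form ``$\O^+$ is almost faithfully flat along $v$-covers'': a $v$-cover of affinoid perfectoid spaces is merely a quasi-compact surjection, the definition imposes no flatness whatsoever, and the almost purity theorem yields almost finite \'etaleness only for finite \'etale maps, so it cannot be used to promote an arbitrary $v$-cover to an (almost) faithfully flat ring map. Consequently the appeal to (almost) faithfully flat descent to construct the descended module $M$, and to test its finite projectivity after the cover, has no support --- and this effectivity is precisely the mathematical content of the theorem rather than a formal consequence of acyclicity. The known proofs proceed differently: full faithfulness does follow from $\RGamma_v(X,\O)=R$, but effectivity is obtained by first pulling back along a pro-\'etale cover by a strictly totally disconnected space (using \'etale and pro-\'etale descent, established separately by Kedlaya--Liu, to reduce to such a base), then splitting the given $v$-cover pointwise over each connected component $\Spa(K,K^+)$, descending there along an extension of perfectoid fields, and finally spreading the componentwise descent out over the strictly totally disconnected base via the acyclicity of $\O$ and an approximation argument; no flatness of the cover is ever invoked. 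Unless you replace the flatness mechanism by an argument of this kind (or simply cite Kedlaya--Liu, as the paper does), the central step of your proof remains unjustified.
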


The analogous statement  is not true for rigid spaces over $K$: By \cite[Theorem~1.3]{heuer-v_lb_rigid}, there are usually many more line bundles in the pro-\'etale topology than in the analytic topology.

Kedlaya--Liu further prove that Kiehl's Theorem A holds in great generality:
\begin{Theorem}[{\cite[Theorem 8.2.22.(b)]{KedlayaLiu-rel-p-p-adic-Hodge-I}}]\label{t:KL-Kiehl-A}
	Let $X$ be a (sheafy) affinoid adic space, then line bundles on $X$ for the analytic topology are equivalent to projective $\O(X)$-modules of rank 1.
\end{Theorem}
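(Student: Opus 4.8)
The plan is to produce mutually quasi-inverse functors between vector bundles (locally free $\O_X$-modules of finite rank) on $X_{\an}$ and finite projective $A$-modules, where $A=\O(X)$, and then to restrict to rank $1$. In one direction I would send a vector bundle $\mathcal E$ to its module of global sections $M:=\Gamma(X,\mathcal E)$; in the other I would send a finite projective module $M$ to the presheaf $\widetilde M$ defined on rational subsets $U\subseteq X$ by $\widetilde M(U):=M\otimes_A\O(U)$. Since $M$ is finitely presented, this algebraic tensor product already agrees with the completed one, so no completion subtleties arise on the module side.

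First I would establish the analogue of Tate's acyclicity theorem: for a standard rational covering of $X$, or of any rational subset, the augmented \v{C}ech complex of $\O$ is exact. This is precisely what the hypothesis that $(A,A^+)$ is sheafy encodes. Tensoring with $M$ then gives exactness of the \v{C}ech complex of $\widetilde M$ for every finite projective $M$: such an $M$ is a direct summand of a finite free module, so $M\otimes_A(-)$ is exact and commutes with the localisations, and the case $M=A^{\oplus n}$ already forces the general one. In particular $\widetilde M$ is a sheaf with vanishing higher \v{C}ech cohomology on $X$ and on all rational subsets, and $\Gamma(X,\widetilde M)=M\otimes_A A=M$ by construction, so $\Gamma\circ\widetilde{(\,\cdot\,)}\cong\id$.

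Next I would check that $\widetilde M$ is a vector bundle: as $M$ is finite projective, an idempotent-splitting (partition-of-unity) argument produces a rational covering $\{U_i\}$ of $X$ on which $M\otimes_A\O(U_i)$ becomes free of finite rank, whence $\widetilde M|_{U_i}\cong\O_{U_i}^{\oplus r_i}$, so $\widetilde M$ is locally free, and of rank $1$ when $M$ is invertible. Conversely, for a vector bundle $\mathcal E$ I would pick a trivialising rational covering, combine the \v{C}ech acyclicity above with the Kiehl-type finiteness statement (global sections of a vector bundle on a sheafy affinoid form a finite $A$-module) to conclude that $M:=\Gamma(X,\mathcal E)$ is finite projective, and then glue the local trivialisations into an isomorphism $\widetilde M\isomarrow\mathcal E$. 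Restricting to rank $1$ yields the asserted equivalence between line bundles and invertible $A$-modules.

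The hard part is the input of the second paragraph together with the finiteness needed in the converse: these are the substantive Tate/Kiehl-type theorems where the sheafy hypothesis genuinely enters and where, in the general adic setting, one must control completed localisations carefully. Once acyclicity and finiteness are in hand, the remaining steps — that $\widetilde M$ is locally free, that global sections recover $M$, and that the local trivialisations glue — are formal.
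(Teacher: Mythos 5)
This statement is not proved in the paper at all: it is quoted verbatim from Kedlaya--Liu \cite[Theorem 8.2.22.(b)]{KedlayaLiu-rel-p-p-adic-Hodge-I}, so there is no internal argument to compare against. Judged as a proof in its own right, your sketch has a genuine gap. The direction ``finite projective $M$ gives a locally free sheaf $\widetilde M$ with $\Gamma(X,\widetilde M)=M$'' is essentially fine (the \v{C}ech complex for $\widetilde M$ is a direct summand of the one for a finite free module, and local freeness follows from lifting a basis of $M\otimes\kappa(x)$ to a rational neighbourhood where the relevant determinant is invertible). But the converse direction is where the entire content of the theorem lies, and you dispose of it by invoking ``the Kiehl-type finiteness statement'' --- which is precisely the statement being proved. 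Moreover, finiteness of $\Gamma(X,\mathcal E)$ alone would not suffice: one needs that for a trivialising rational covering the local pieces \emph{glue}, i.e.\ that finite projective modules satisfy descent along rational (in practice, simple Laurent) coverings, so that $\Gamma(X,\mathcal E)\otimes_A\O(U)\to\mathcal E(U)$ is an isomorphism. This is established by an approximation and matrix-factorisation argument (Kiehl's Cartan-lemma style decomposition of invertible matrices over the overlap ring; in Kedlaya--Liu this is the chain of Lemmas around 2.7.2--2.7.4, followed by induction on the complexity of rational coverings). None of this appears in your outline, and the final assertion that ``the local trivialisations glue'' is exactly the non-formal step.

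A secondary inaccuracy: sheafiness of $(A,A^+)$ is only the sheaf condition (exactness of the \v{C}ech complex in degrees $\leq 0$); the vanishing of higher \v{C}ech cohomology of $\O$ on rational coverings is not ``what sheafiness encodes'' but a further theorem of Kedlaya--Liu deduced from sheafiness. So both analytic inputs you rely on --- Tate-type acyclicity and Kiehl-type gluing/finiteness --- are themselves the substantive theorems, and your proposal, while organised along the correct standard dictionary, does not supply proofs of either.
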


As mentioned in the introduction, we can use Theorem~\ref{t:KL-Kiehl-A} to answer Question~\ref{q:Pic(X)->Pic(X_infty)-surj} in the affinoid case (under mild technical assumptions), by the following result of Gabber--Ramero:
\begin{Proposition}[{\cite[Corollary~5.4.42]{GabberRamero}}]\label{p:Gabber-Ramero-5.4.42}
	Let $R$ be a commutative ring, $\varpi\in R$ a non-zero-divisor, $\widehat{R}$ the $\varpi$-adic completion. Assume $(R,(\varpi))$ is henselian. Then the base change
	\[R[\tfrac{1}{\varpi}]\text{-}\mathbf{Mod}\to \widehat{R}[\tfrac{1}{\varpi}]\text{-}\mathbf{Mod} \]
	induces a bijection between the isomorphism classes of finite projective modules.
	Hence, if $(X_i)_{i\in I}$ is a cofiltered inverse system of uniform affinoid adic spaces over $K$ and $X$ is an affinoid adic space with compatible maps $X\to X_i$ such that $\O(X)=(\varinjlim_{i\in I} \O^+(X_i))^\wedge[\tfrac{1}{\varpi}]$,
	\[\textstyle \Pic_{\an}(X)=\varinjlim_{i\in I} \Pic_{\an}(X_i).\]
\end{Proposition}
\begin{proof}
	The first part is \cite[Corollary~5.4.42]{GabberRamero}. The second part follows by Theorem~\ref{t:KL-Kiehl-A}.
\end{proof}

We recall from \cite[Definition 2.4.2]{huber2013etale} the notion of tilde-limits: Let $(X_i)_{i\in I}$ be a cofiltered inverse system of adic spaces with qcqs transition maps. An adic space $X_\infty$ with compatible maps $(f_i:X_\infty\to X_i)_{i\in I}$ is called the tilde-limit and we write
\[ \textstyle X_\infty\sim \varprojlim_{i\in I} X_i\]
if the $f_i$ induce a homeomorphism $|X_\infty|=\varprojlim |X_i|$ and there is a cover of $X_\infty$ by affinoid open subspaces $U_\infty$ for which $\varinjlim_{U}\O(U)\to \O(U_\infty)$
has dense image, where the colimit is over all $i\in I$ and all affinoid opens $U\subseteq X_i$ through which $U_\infty\to X_i$ factors. If all $X_i$ and $X_\infty$ are affinoid, we write  $X_\infty\approx \varprojlim_{i\in I} X_i$
if already $\varinjlim \O(X_i)\to \O(X_\infty)$ has dense image. Under mild technical assumptions, any tilde-limit is locally of this form:
\begin{Lemma}\label{l:tilde-limit-locally-approx}
	Let $X_\infty\sim \varprojlim_{i\in I} X_i$ be a tilde-limit of a cofiltered inverse system of quasi-compact adic spaces over $K$. Assume that each $X_i$ has an open cover by affinoid spaces $U_i$ such that $U_i\times_{X_i}X_j$ is affinoid for all $j\geq i$. Then there is $i\in I$ and an open cover of $X_i$ by affinoids $U$ such that $U\times_{X_i}X_\infty\approx \varprojlim_{j\geq i} U\times_{X_i}X_j$ is  a tilde-limit of affinoid spaces. 
	
	If $X_\infty$ is perfectoid, we can moreover arrange for $U\times_{X_i}X_\infty$ to be affinoid perfectoid.
\end{Lemma}
\begin{proof}
	Let $x\in X_\infty$ be any point. The tilde-limit condition asserts that we can find an affinoid open neighbourhood $x\in U$ such that 
	\[ \varinjlim_{(V,i)} \O(V)\to \O(U)\]
	has dense image, where $(V,i)$ ranges through $i\in I$ and all affinoids $V\subseteq X_i$ through which $U\to X\to X_i$ factors. Let $x\in U'\subseteq U$ be any affinoid (respectively, affinoid perfectoid) open subspace. Since rational subspaces of $U$ form a basis for the topology, we can find a rational open subspace $V$ of $U$ such that $x\in V\subseteq U'$ and such that $V$ is contained in the pullback of one of the affinoid $U_i\subseteq X_i$ in the statement of the Lemma. Since $V$ is rational in $U$, it  is in particular also rational in $U'$ (hence affinoid perfectoid). Since $|X_\infty|=\varprojlim_{i\in I} |X_i|$, we may then further replace $V$ by the pullback of some rational open $V_i$ in $U_i$. Set $V_j:=V_i\times_{X_i}X_j$, this is affinoid because  $U_i\times_{X_i}X_j$ is affinoid by assumption and the pullback of any rational subspace along a morphism of affinoid adic spaces is rational. This combines with \cite[Remark 2.4.3.(ii)]{huber2013etale} to show that 
	\[\textstyle V\approx \varprojlim_{j\geq i}X_j\times_{X_i}X_j\]
	is an (affinoid perfectoid) tilde-limit. Letting $x$ vary and using that $X$ is quasi-compact by \cite[Tag 08ZV]{StacksProject}, this proves the lemma.
\end{proof}

\subsection{Recollection on replete topoi}
We now recall some background from \cite[\S3]{bhatt-scholze-proetale} and \cite[\S10]{ScholzeBerkeleyLectureNotes} on replete topoi.
\begin{Definition}[{\cite[Definition 3.1.1]{bhatt-scholze-proetale}}]
	A topos $\mathfrak X$ is called replete if for any inverse system $(F_n)_{n\in\N}$ in $\mathfrak X$ with surjective transition maps, the projection $\varprojlim_{n\in \N} F_n\to F_m$ is surjective for all $m\in \N$. We say that a site is replete if its associated topos is replete.
\end{Definition}
The following example of replete sites is well-known (see \cite{ScholzeBerkeleyLectureNotes} before Proposition~10.4.3).
\begin{Lemma}\label{l:qproet-replete}
	Let $X$ be a diamond. Then $X_{\qproet}$ and $X_{v}$ are replete.
\end{Lemma}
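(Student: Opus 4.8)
The plan is to produce a basis of both sites consisting of objects over which the evaluation of sheaves is exact, and then to verify the repleteness condition directly on this basis, using throughout that a morphism of $v$- (resp.\ $\qproet$-) sheaves is surjective precisely when it is surjective on sections over every member of a basis.

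First I would recall that for a diamond $X$, both $X_{v}$ and $X_{\qproet}$ admit a basis consisting of strictly totally disconnected perfectoid spaces $W$ equipped with a $v$- (resp.\ $\qproet$-) map to $X$: every diamond admits such a cover, and these objects are qcqs and stable under the relevant fibre products, see \cite[\S10]{ScholzeBerkeleyLectureNotes} and \cite{etale-cohomology-of-diamonds}. The crucial input is that any $v$-cover of a strictly totally disconnected space splits, i.e.\ admits a section; since every $\qproet$-cover is in particular a $v$-cover, the same splitting holds in $X_{\qproet}$, so a single basis serves both topologies.

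Next I would deduce that for such $W$ any surjection $\mathcal F\to\mathcal G$ of sheaves induces a surjection $\mathcal F(W)\to\mathcal G(W)$: given $s\in\mathcal G(W)$, surjectivity provides a cover of $W$ together with a lift of $s$; as $W$ is qcqs this cover may be refined to a single $v$-cover $W'\to W$ carrying a lift $\tilde s\in\mathcal F(W')$, and pulling $\tilde s$ back along a section of $W'\to W$ yields a lift of $s$ in $\mathcal F(W)$. Finally, for an inverse system $(\mathcal F_n)_{n\in\N}$ of sheaves with surjective transition maps, I would use that inverse limits of sheaves are computed sectionwise, so that $(\varprojlim_n\mathcal F_n)(W)=\varprojlim_n\mathcal F_n(W)$. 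By the previous step each map $\mathcal F_{n+1}(W)\to\mathcal F_n(W)$ is a surjection of sets, and for an $\N$-indexed chain a successive lifting argument shows that $\varprojlim_n\mathcal F_n(W)\to\mathcal F_m(W)$ is surjective for every $m$; since this holds on a basis, $\varprojlim_n\mathcal F_n\to\mathcal F_m$ is surjective, which is exactly repleteness.

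The only nontrivial ingredient is the structural fact from the first step that strictly totally disconnected perfectoid spaces form a basis over which $v$-covers split; granting Scholze's results this is available, and the remainder is formal. The one point requiring minor care is that the same basis serves both $X_v$ and $X_{\qproet}$, which is guaranteed by the observation that $\qproet$-covers are $v$-covers, so an object weakly contractible for the finer $v$-topology is a fortiori so for the coarser $\qproet$-topology.
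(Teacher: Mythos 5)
Your argument has a genuine gap at its central step: it is \emph{not} true that every $v$-cover of a strictly totally disconnected perfectoid space splits. For example, $\Spa(C,\O_C)$ with $C$ algebraically closed perfectoid is strictly totally disconnected, and for any proper extension $C\subsetneq C'$ of algebraically closed perfectoid fields the map $\Spa(C',\O_{C'})\to \Spa(C,\O_C)$ is a $v$-cover; a section would be a continuous $C$-algebra homomorphism $C'\to C$, which does not exist once $C'$ is taken large enough (e.g.\ of larger cardinality). Since your step that ``surjections of sheaves are surjective on $W$-sections'' is, for qcqs $W$, \emph{equivalent} to the splitting of all covers of $W$ (apply it to the epimorphism $h_{W'}\to h_W$ attached to a cover $W'\to W$), that step fails as well: the $v$-topos is replete but it is not locally weakly contractible, so no choice of basis can make this strategy work for $X_v$. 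Even for $X_{\qproet}$ the claim is off: strictly totally disconnected spaces are the analogue of w-strictly local, not w-contractible, schemes; a profinite surjection $S'\to \pi_0(W)$ without section gives a pro-\'etale cover $W\times_{\underline{\pi_0(W)}}\underline{S'}\to W$ with no splitting unless $\pi_0(W)$ is extremally disconnected, and producing a basis of such ``w-contractible'' objects would be an additional nontrivial input not available in the references you cite.

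The paper's proof avoids splitting altogether and works directly with the definition of repleteness: given $s\in F_0(Y)$ with $Y$ affinoid perfectoid, one inductively chooses affinoid perfectoid $v$-covers (resp., after first passing to a strictly totally disconnected cover, pro-\'etale covers) $\dots\to Y_2\to Y_1\to Y$ together with compatible lifts of $s$ to $F_n(Y_n)$, and then uses that the tilde-limit $Y_\infty\sim\varprojlim_n Y_n$ exists as an affinoid perfectoid space and is still a $v$-cover (resp.\ pro-\'etale cover) of $Y$. The surjectivity of $\varprojlim_n F_n\to F_0$ is thus witnessed over the single cover $Y_\infty\to Y$; the mechanism is stability of covers under countable inverse limits, not contractibility of the objects in a basis. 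If you want to salvage your outline, you would have to replace the splitting claim by exactly this kind of limit argument, at which point you have reproduced the paper's proof.
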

This can be seen exactly like in \cite[Example 3.1.7]{bhatt-scholze-proetale}. Since there does not seem to be a proof in the literature, we present the argument here for the convenience of the reader.
\begin{proof}
	Let $(F_n)_{n\in\N}$ be a system of sheaves on $X_v$. Let $Y\to X$ be an object of $X_v$ and let $s\in F_0(Y)$.
	Since any diamond admits a quasi-pro-\'etale cover by a perfectoid space, we may without loss of generality assume that $Y$ is affinoid perfectoid. Then there exists a $v$-cover $Y_{1}\to Y$ by some perfectoid space such that $s$ lifts to $F_{1}(Y_1)$. We may assume that $Y_{1}$ is affinoid perfectoid: Covering $Y_{1}$ by affinoid perfectoid opens $U_i$, we see by the ``quasi-compactness'' condition in the definition of $v$-covers that finitely many of the $U_i$ cover $Y_{1}$, and their disjoint union is affinoid perfectoid. 
	
	Iterating this process, we get a system $\dots \to Y_{2}\to Y_{1}\to Y$ of affinoid perfectoid $v$-covers with a compatible system of lifts of $s$ to $F_m(Y_m)$. Then there is a unique affinoid perfectoid tilde-limit $Y_\infty\sim\varprojlim Y_m$ by \cite[Proposition 6.4]{etale-cohomology-of-diamonds}. By the tilde-limit property, $Y_\infty\to Y_1$ is still a $v$-cover, and by construction $s$ has a preimage under $\varprojlim  F_n(Y_\infty)\to F_1(Y_\infty)$.
	
	The quasi-pro-\'etale case is similar: Here we first use that by \cite[Lemma 7.18]{etale-cohomology-of-diamonds}, there is a quasi-pro-\'etale cover  $\wt Y\to Y$ by a strictly totally disconnected space, and replacing $Y$ by $\wt Y$ we may thus assume that $Y$ is strictly totally disconnected. 
	
	Any quasi-pro-\'etale cover $Y_1\to Y$ is then pro-\'etale, and $Y_1$ is itself affinoid perfectoid and  strictly totally disconnected by \cite[Lemma 7.19]{etale-cohomology-of-diamonds}. Again, we find an inverse system
	\[\dots \to Y_{n}\to \dots \to Y_{1}\to Y\]
	with compatible lifts of $s\in F(Y)$ to $F(Y_n)$.
	By  \cite[
	Proposition 7.10 and Lemma 7.11]{etale-cohomology-of-diamonds}, there is an affinoid perfectoid tilde-limit $Y_\infty\sim\varprojlim_n Y_{n}\to Y$ that is still a pro-\'etale cover.
	We thus see that $s$ is in the image of  $\varprojlim_nF_n(Y_\infty)\to F_1(Y_{\infty})$, as we wanted to see.
\end{proof}
\begin{Corollary}\label{c:replete-derived-complete}
	On either $X_v$ or $X_{\qproet}$,
	for an inverse system of abelian sheaves $(F_n)_{n\in\N}$ with surjective transition maps we have
	\[ \textstyle\Rlim_nF_n=\varprojlim_nF_n.\]
\end{Corollary}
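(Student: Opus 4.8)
The plan is to reduce the assertion to the vanishing of the higher derived limits $R^i\lim_n F_n$ for $i\geq 1$, and then to extract this vanishing from repleteness (Lemma~\ref{l:qproet-replete}). By Lemma~\ref{l:qproet-replete} the topoi $X_v$ and $X_{\qproet}$ are replete, and a basic consequence of repleteness is that countable products of abelian sheaves are exact. Granting this, $R\lim_n F_n$ is represented by the two-term Milnor complex
\[ \textstyle\prod_n F_n \xrightarrow{\,1-f\,} \prod_n F_n \]
in degrees $0$ and $1$, where $f$ is the shift twisted by the transition maps $f_{n+1}\colon F_{n+1}\to F_n$. Hence $R^i\lim_n F_n$ vanishes for $i\geq 2$ automatically, $\mathcal H^0=\lim_n F_n$, and the entire statement comes down to showing that $1-f$ is an epimorphism of sheaves, i.e.\ that $R^1\lim_n F_n=\coker(1-f)=0$, under the hypothesis that the $f_{n+1}$ are surjective.

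To prove that $1-f$ is a sheaf epimorphism---a covering-local statement---I would mimic the classical recursive construction of a $\lim^1$-preimage. Fix a section $(t_n)_n$ of $\prod_n F_n$ over an object $Y$; a preimage is a compatible family $(s_n)$ with $s_n-f_{n+1}(s_{n+1})=t_n$ for all $n$. For each $N$ let $P_N$ be the sheaf on $Y$ of partial solutions $(s_0,\dots,s_N)$ satisfying these relations for $n<N$, with the forgetful maps $P_{N+1}\to P_N$ as transition morphisms. The crucial observation is that each $P_{N+1}\to P_N$ is surjective: extending a partial solution by one term amounts to solving $f_{N+1}(s_{N+1})=s_N-t_N$, which is locally possible precisely because $f_{N+1}$ is a sheaf epimorphism. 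A full preimage over a cover of $Y$ is then an element of $\lim_N P_N$, and repleteness guarantees that $\lim_N P_N\to P_0=F_0$ is surjective; since $P_0$ has sections locally, so does $\lim_N P_N$, producing the desired local preimage.

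The step I expect to be the main obstacle is this middle one: setting up the auxiliary systems $(P_N)$ and checking that their transition maps are surjective \emph{as maps of sheaves}, since this is exactly the point at which the surjectivity hypothesis on the $f_{n+1}$ must be converted, via repleteness, into the existence of an honest compatible lift rather than merely a sectionwise one. The two supporting inputs---exactness of countable products and the resulting two-term presentation of $R\lim$---are standard features of replete topoi and can be taken from \cite[\S3]{bhatt-scholze-proetale}; the whole argument is the natural sheaf-theoretic upgrade of the fact that an $\N$-indexed inverse system of abelian groups with surjective transition maps has vanishing $\lim^1$.
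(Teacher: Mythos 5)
Your argument is correct and is essentially the paper's: the proof there is a one-line citation of \cite[Proposition~3.1.10]{bhatt-scholze-proetale} combined with Lemma~\ref{l:qproet-replete}, and the proof of that cited proposition is precisely what you spell out --- exactness of countable products in a replete topos, the resulting two-term Milnor complex computing $\Rlim$, and the recursive lifting of partial solutions (using repleteness to pass to the limit) to show the shifted-difference map is an epimorphism. So you have simply inlined the proof of the cited result rather than taken a different route.
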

\begin{proof}
	By \cite[Proposition 3.1.10]{bhatt-scholze-proetale}, this follows from Proposition~\ref{l:qproet-replete}.
\end{proof}

\subsection{Definition of $\bOx$ and basic properties}
In order to study line bundles in inverse limits, the sheaf $\bOx$ used in \cite[\S2.3]{heuer-v_lb_rigid} will be crucial for us.
We begin by recollecting its definition and some of its basic properties. For this we can more generally let $K$ be any non-archimedean field over $\Z_p$, not necessarily perfectoid.
\begin{Definition}\label{d:bOx}
	Let either $X$ be a rigid space over $K$ and $\tau=\an$, $\et$, or if $\Char K=0$ also $\proet$ in the sense of \cite[\S3]{Scholze_p-adicHodgeForRigid}. Or let $X$ be a perfectoid space over $K$ and $\tau$ one of $\an,\et,\proet,v$. Let \[\O^\times_{1\tau}:=1+\m\O^+_\tau\]
	be the sheaf of principal units in $\O^\times_\tau$. We then let $\bOx_{\tau}$ be the sheaf on $X_\tau$ defined by 
	\[ \bOx_{\tau}:=(\O^\times_{\tau}/\O^\times_{1\tau})\tf.\]
	Here the $\tf$ means that we formally invert $p$, i.e.\ form the colimit over the maps $x\mapsto x^p$.
\end{Definition}
We note that at first glance, this definition might appear to differ from the one used in \cite[\S2.3]{heuer-v_lb_rigid}\cite[\S2.3]{heuer-diamantine-Picard}, due to the additional $\tf$ in the definition. However, the definitions actually agree due to the following lemma: Only for the analytic topology (which we did not consider in the aforementioned settings) do we need to invert $p$ to get the correct statements.
\begin{Lemma}\label{l:bOx-p-divisible}
	Let $\tau$ be the \'etale topology, or any finer topology. Then
	\[ \O^\times_{\tau}/\O^\times_{1\tau}=\O^\times_{\tau}/\O^\times_{1\tau}\tf.\]
\end{Lemma}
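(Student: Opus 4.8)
The plan is to rephrase the claim as the assertion that the $p$-power map $[p]\colon x\mapsto x^p$ is an \emph{automorphism} of the $\tau$-sheaf $\mathcal F:=\O^\times_{\tau}/\O^\times_{1\tau}$. Indeed, by Definition~\ref{d:bOx} the right-hand side $\mathcal F\tf$ is by construction the colimit of $\mathcal F\xrightarrow{[p]}\mathcal F\xrightarrow{[p]}\cdots$, so the two agree precisely when $[p]$ is an isomorphism. The map $[p]$ descends to $\mathcal F$ because it preserves $\O^\times_{1\tau}=1+\m\O^+$, as $(1+m)^p=1+pm+\dots+m^p\in 1+\m\O^+$ in either characteristic. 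It then suffices to treat $\tau=\et$: the covers used below remain covers in the finer topologies $\proet$ and $v$, so the same root extractions apply (and for a perfectoid $X$ with $\Char K=p$ and $\tau=v$ the structure sheaf is even perfect, making $[p]$ manifestly bijective). I would then check injectivity and surjectivity of $[p]$ separately; injectivity is a statement about \emph{sections} (triviality of the kernel sheaf), whereas surjectivity is where the topology is genuinely used.

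\textbf{Injectivity.} If $f\in\O^\times$ has $f^p\in 1+\m\O^+$, then $|f(x)|=1$ at every point $x$, and the image $z$ of $f$ in the residue field of the valuation at $x$, which has characteristic $p$, satisfies $z^p=1$ and hence $z=1$. Thus $|f(x)-1|<1$ for all $x$, that is, $f\in 1+\m\O^+$. (In characteristic $p$ one sees this at once from $f^p-1=(f-1)^p$, so that $\|f-1\|<1$ by multiplicativity of the valuation.)

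\textbf{Surjectivity.} If $\Char K=0$ then $p$ is invertible in $\O$, so $x\mapsto x^p$ realises $\G_m$ as a finite étale cover of itself; hence $[p]$ is already surjective on $\O^\times_{\et}$ by Kummer theory, and a fortiori on the quotient $\mathcal F$. The case $\Char K=p$ is the crux: now $[p]$ is the Frobenius and taking honest $p$-th roots is purely inseparable, hence never étale, so Kummer theory is unavailable. The key observation is that Artin--Schreier covers furnish $p$-th roots \emph{modulo principal units}: if $u\in\O^\times$ satisfies $u^{-1}\in\m\O^+$ (i.e.\ $|u|$ is uniformly $>1$), then $y^p-y=u$ is finite étale, and on it $y^p=u+y\equiv u \bmod 1+\m\O^+$, since $|y|=|u|^{1/p}$ forces $y/u\in\m\O^+$; thus $u$ becomes a $p$-th power in $\mathcal F$ étale-locally. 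To handle an arbitrary unit $h$, I would localise so that $|h|$ is close to a single value, multiply by a suitable scalar in $K^\times$ to arrange $h^{-1}\in\m\O^+$, apply the Artin--Schreier step, and untwist using that the chosen scalar has a genuine $p$-th root in $K$ (the residue field being perfect and the value group divisible, as for perfectoid $K$). This exhibits $[p]$ as a surjection of étale sheaves.

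The delicate point, and the expected main obstacle, is exactly this last step: reconciling the inseparability of $p$-th roots in characteristic $p$ with the need to stay in the étale topology. The resolution — that one only needs roots \emph{modulo} $1+\m\O^+$, which Artin--Schreier covers do provide — is the crucial idea, and the care lies in reducing a general unit to the regime $u^{-1}\in\m\O^+$ and in verifying that the resulting error terms genuinely land in $\m\O^+$.
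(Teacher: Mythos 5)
Your reformulation (that the claim says $[p]$ is an automorphism of $\O^\times_{\tau}/\O^\times_{1\tau}$), your injectivity check, and the use of the Kummer sequence in characteristic $0$ all agree with the paper, and your central idea in characteristic $p$ — that \'etale covers can produce $p$-th roots \emph{modulo} $1+\m\O^+$ even though honest $p$-th roots are inseparable — is precisely the idea the paper's proof uses. The genuine gap is in your untwisting step. This lemma sits in a subsection whose standing hypothesis is that $K$ is an arbitrary non-archimedean field over $\Z_p$, explicitly \emph{not} assumed perfectoid (and the theory of $\bOx$ is indeed invoked over such fields later, in the lemmas on formal models in the good-reduction section). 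Take $K=\F_p(\!(t)\!)$ and $\varpi=t$: the value group is $\Z$, so $\varpi^{-N}$ has a $p$-th root in $K$ — or even modulo $1+\m$, which is all you would actually need — only when $p\mid N$. As written, ``untwist using that the chosen scalar has a genuine $p$-th root in $K$'' is unavailable, so in characteristic $p$ your argument proves the lemma only over perfectoid (or suitably perfect) base fields, not in the stated generality.

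The repair is small, and worth spelling out because the paper's proof is exactly the repaired argument packaged into a single cover. Either rescale by a scalar that is a $p$-th power on the nose, i.e.\ by $\varpi^{-pM}$ with $M\gg 0$, so that once Artin--Schreier gives $\varpi^{-pM}h\equiv y^p$ you get $h\equiv(\varpi^{M}y)^p$ modulo $1+\m\O^+$; or run your Artin--Schreier step a second time on $\varpi^{-N}$ itself and divide the two congruences. The paper instead takes, for a unit $a$ on an affinoid $U$, the one finite \'etale cover cut out by $X^p-\varpi^nX-a=0$ (\'etale because the derivative $-\varpi^n$ is a unit); the Newton polygon shows that for $n\gg 0$ every root is a unit satisfying $X^p(1-\varpi^nX^{1-p})=a$ with $1-\varpi^nX^{1-p}\in 1+\m\O^+$, so $a$ becomes a $p$-th power in the quotient with no rescaling, no untwisting, and no hypothesis on $K$ beyond non-archimedeanness. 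Two minor further points: your preliminary localisation is unnecessary, since a unit on an affinoid already has $|h|$ bounded above and below; and in the injectivity step the passage from the pointwise bound $|f(x)-1|<1$ to $f-1\in\m\O^+$ should be carried out on affinoid (quasi-compact) objects, where it follows from compactness of the Berkovich spectrum — or, in characteristic $p$, simply from $(f-1)^p=f^p-1$ together with integral closedness of $\O^+$.
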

\begin{proof}
	 If $\Char K=0$, this holds by the Kummer sequence \cite[Lemma~2.16]{heuer-v_lb_rigid}. If $\Char K=p$, a variant of this argument still works: Let $U\in X_{\tau}$ be affinoid and let $a\in \O^\times(U)$. For any $n\in \N$, consider the finite \'etale cover $U'\to U$ defined by the equation
	 \[ f:=X^p+\varpi^nX+a=0.\]
	 Then $X\in \O^\times(U')$, and we can write $X^p(1-\varpi^nX^{1-p})=a$.
	 Using the Newton polygon of $f$ at any point of $U'$, we see that that for $n\gg0$, we have $(1-\varpi^nX^{1-p})\in 1+\m\O^+_{\tau}(U')$. Consequently, the image of $X$ is a $p$-th root of $a$ inside $\O^\times_{\tau}/\O^\times_{1\tau}(U')$. 
\end{proof}

The following Proposition is the main reason why $\bOx$ is useful for us, and we used this sheaf in \cite[\S2]{heuer-v_lb_rigid} and \cite[\S2.3]{heuer-diamantine-Picard} for essentially the same reason: In many situations, its cohomology commutes with forming the kinds of limits we are interested in: 
\begin{Proposition}[{\cite[Corollary~3.20]{heuer-diamantine-Picard}}]\label{p:cohom-comparison}
	Let $X_\infty\sim\varprojlim_{i\in I} X_i$ be a perfectoid tilde-limit of sheafy qcqs adic spaces over $K$. Assume that each $X_i$ admits a cover by affinoid spaces $U_i$ such that $U_i\times_{X_i}X_j$ is affinoid for all $j\geq i$.  
	Then for all $j\geq 0$,
	\[\textstyle H^j_{\et}(X_\infty,\overbar{\O}^{\times})=\varinjlim_{i\in I} H^j_{\et}(X_i,\bOx).\]
	The same holds for $\bOx$ replaced by $\O^+/\varpi$  for any pseudo-uniformiser $\varpi\in K$.
\end{Proposition}
\begin{proof}
	The cited \cite[Corollary~3.20]{heuer-diamantine-Picard} shows this under the additional assumption that there is $i\in I$ and a cover of $X_i$ by affinoid open subspaces $U\subseteq X_i$ such that $U\times_{X_i}X_\infty\approx \varprojlim_{j\geq i} U\times_{X_i}X_j$ is an affinoid perfectoid tilde-limit of affinoid adic spaces. By \Cref{l:tilde-limit-locally-approx}, our assumption on affinoid transition maps implies that this is the case.
	
	While this is not stated explicitly, we emphasize that \cite[Corollary~3.20]{heuer-diamantine-Picard} does apply in characteristic $p$: In fact, the short proof only uses \cite[Lemma~3.10,~3.13]{heuer-diamantine-Picard} which explicitly allow characteristic $p$, as well as  \cite[Lemma 3.8]{heuer-diamantine-Picard}, which clearly holds in characteristic $p$.
\end{proof}

\begin{Corollary}\label{c:bOx-for-rigid-proet}
	Let $X$ be a perfectoid space over $K$, or assume $\Char K=0$ and let $X$ be a rigid space. Then for  $u:X_{\proet}\to X_{\et}$ we have $u^{\ast}\bOx_{\et}=\bOx_{\proet}$ and $Ru_{\ast}\bOx_{\proet}=\bOx_{\et}$.
\end{Corollary}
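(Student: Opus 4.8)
The plan is to treat the two assertions separately: the statement about $u^*$ is formal, while the statement about $Ru_*$ rests entirely on the limit comparison of Proposition~\ref{p:cohom-comparison}. First I would dispatch $u^{\ast}\bOx_{\et}=\bOx_{\proet}$. The inverse image functor $u^{\ast}$ is exact and commutes with arbitrary colimits, so it commutes with the formation of the quotient $\O^\times/\O^\times_1$ and with the Frobenius-type colimit $(-)\tf$ defining $\bOx$. It therefore suffices to know $u^{\ast}\O^\times_{\et}=\O^\times_{\proet}$ and $u^{\ast}\O^\times_{1\et}=\O^\times_{1\proet}$, which hold because the pro-\'etale integral structure sheaf $\O^+_{\proet}$ (hence $\O_{\proet}$, $\O^\times_{\proet}$, and $\O^\times_{1\proet}=1+\m\O^+_{\proet}$) is by definition the $u$-pullback of its \'etale counterpart.

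For the second assertion I would check that the adjunction unit $\bOx_{\et}\to Ru_{\ast}u^{\ast}\bOx_{\et}=Ru_{\ast}\bOx_{\proet}$ is an isomorphism by computing $R^iu_{\ast}\bOx_{\proet}$, i.e.\ the sheafification on $X_{\et}$ of the presheaf $V\mapsto H^i_{\proet}(V,\bOx_{\proet})$, on the basis of affinoids. Given an affinoid $V\in X_{\et}$, I would choose an affinoid perfectoid pro-finite-\'etale Galois cover $V_\infty\sim\varprojlim_j V_j\to V$ with profinite group $G=\varprojlim_j G_j$ (such covers form a basis of $V_{\proet}$; in the perfectoid case one may take $V_\infty=V$), and run the Cartan--Leray spectral sequence $H^s_{\cts}(G,H^t_{\proet}(V_\infty,\bOx))\Rightarrow H^{s+t}_{\proet}(V,\bOx)$.

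The heart of the matter is to identify $H^i_{\proet}(V,\bOx_{\proet})$ with $\varinjlim_j H^i_{\et}(V_j,\bOx_{\et})$. This is exactly where Proposition~\ref{p:cohom-comparison} is indispensable: applied to the tilde-limit $V_\infty\sim\varprojlim_i V_i$ it computes the cohomology of $\bOx$ on the perfectoid level in terms of the finite levels, and in degree $t=0$ it exhibits $\bOx(V_\infty)=\varinjlim_j\bOx(V_j)$ as a \emph{discrete} $G$-module. Using repleteness (Corollary~\ref{c:replete-derived-complete}) to commute cohomology with the inverse limit, one can then match the pro-\'etale Cartan--Leray spectral sequence for $V_\infty\to V$ with the colimit over $j$ of the \'etale Cartan--Leray spectral sequences for the finite Galois covers $V_j\to V$, the discreteness of $\bOx(V_\infty)$ ensuring that continuous $G$-cohomology agrees with $\varinjlim_j$ of finite-group cohomology. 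This yields the desired identity $H^i_{\proet}(V,\bOx)=\varinjlim_j H^i_{\et}(V_j,\bOx)$.

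Finally I would sheafify on $X_{\et}$. Since each $V_j\to V$ is finite \'etale and the $V_j$ are cofinal among such covers, the presheaf $V\mapsto\varinjlim_j H^i_{\et}(V_j,\bOx)$ becomes, after sheafification, the higher \'etale-cohomology sheaf of $\bOx$: this vanishes for $i\ge 1$ and equals $\bOx_{\et}$ for $i=0$. Hence $R^iu_{\ast}\bOx_{\proet}=0$ for $i\ge 1$ and $u_{\ast}\bOx_{\proet}=\bOx_{\et}$, proving $Ru_{\ast}\bOx_{\proet}=\bOx_{\et}$. I expect the main obstacle to be precisely the identification in the previous paragraph: controlling the pro-\'etale cohomology of the perfectoid object $V_\infty$ and verifying that the continuous $G$-action really produces the colimit of the finite-level \'etale spectral sequences. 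This is genuinely where one needs the limit comparison rather than any naive exactness of $u_{\ast}$, since for $\O^\times$ itself the analogous higher pushforwards are nonzero, and it is only the passage to $\bOx=(\O^\times/\O^\times_1)\tf$ that removes the differential contributions from the principal units.
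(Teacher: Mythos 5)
There is a genuine gap in both halves of your argument, and in both cases it is the same missing ingredient: the completed versus uncompleted structure sheaf. Your first step claims that $\O^+_{\proet}$ is \emph{by definition} $u^{\ast}\O^+_{\et}$. That is false in this paper's conventions. For perfectoid $X$, the site $X_{\proet}$ is that of \cite[Definition 8.1]{etale-cohomology-of-diamonds}, whose objects are perfectoid spaces pro-\'etale over $X$ and whose structure sheaf assigns honest (completed) sections: on $U\sim\varprojlim_i U_i$ one has $u^{\ast}\O^+_{\et}(U)=\varinjlim_i\O^+(U_i)$, while $\O^+_{\proet}(U)=\O^+(U)$ is the $\varpi$-adic completion of this colimit. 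Hence $u^{\ast}\O^\times_{\et}\neq\O^\times_{\proet}$ and $u^{\ast}\O^\times_{1\et}\neq\O^\times_{1\proet}$. The same applies to rigid $X$ here: the exponential sequence of Proposition~\ref{p:bOx} and the identity $\O^+=\varprojlim\O^+/\varpi^n$ used in Lemma~\ref{l:bOx-in-arbitrary-char} only make sense for the completed sheaf. So the identity $u^{\ast}\bOx_{\et}=\bOx_{\proet}$ is not formal; its entire content is that passing to $\O^\times/\O^\times_1$ and inverting $p$ erases the difference between the colimit and its completion, i.e.\ $\bOx(U)=\varinjlim_i\bOx(U_i)$. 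This is precisely the $j=0$ case of Proposition~\ref{p:cohom-comparison} (for perfectoid towers, \cite[Lemma~3.20]{heuer-diamantine-Picard}), combined with the formula for sections of pullback sheaves on towers from \cite[Lemma 3.16]{Scholze_p-adicHodgeForRigid}, resp.\ \cite[Proposition 8.5]{etale-cohomology-of-diamonds}. In other words, the limit comparison you defer to the second half is already needed for the first assertion.

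Your second step rests on an existence claim that is not available in the stated generality: you assume that every affinoid $V\in X_{\et}$ admits an affinoid perfectoid pro-finite-\'etale \emph{Galois} cover $V_\infty\sim\varprojlim_j V_j\to V$, and that such covers form a basis of $V_{\proet}$. The Corollary makes no smoothness assumption on the rigid space $X$; toric charts produce such $\Z_p^d$-covers only for smooth $X$ after \'etale localisation, and for a general affinoid no pro-finite-\'etale perfectoid cover is known to exist. The affinoid perfectoid objects that actually form a basis of $X_{\proet}$ are limits of towers of \'etale maps $U_i\to X$ that are neither finite \'etale covers of a fixed $V$ nor Galois, so there is no profinite group against which to run Cartan--Leray. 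Moreover, in the perfectoid case your choice $V_\infty=V$ makes Cartan--Leray vacuous and leaves unproved exactly what is needed, namely the identification of $H^i_{\proet}(V,\bOx_{\proet})$ with $H^i_{\et}(V,\bOx_{\et})$; and even where the Galois covers exist, identifying the $E_2$-term $H^t_{\proet}(V_\infty,\bOx_{\proet})$ with the \'etale cohomology of the perfectoid space $V_\infty$ (so that Proposition~\ref{p:cohom-comparison} applies) is itself a site comparison of the same nature as the statement being proved. The paper's route avoids all of this: once $u^{\ast}\bOx_{\et}=\bOx_{\proet}$ is established, $Ru_{\ast}\bOx_{\proet}=Ru_{\ast}u^{\ast}\bOx_{\et}=\bOx_{\et}$ follows from Scholze's general results on cohomology of pullback sheaves along arbitrary qcqs towers, namely \cite[Lemma 3.16, Corollary 3.17.(i)]{Scholze_p-adicHodgeForRigid} in the rigid case and \cite[Proposition 8.5.(i)-(ii)]{etale-cohomology-of-diamonds} in the perfectoid case, with no Galois structure required.
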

\begin{proof}
	If $X$ is rigid, this follows from Proposition~\ref{p:cohom-comparison} via \cite[Lemma 3.16, Corollary~3.17.(i)]{Scholze_p-adicHodgeForRigid}. If $X$ is perfectoid, it also follows from Proposition~\ref{p:cohom-comparison} via \cite[Proposition 8.5.(i)-(ii)]{etale-cohomology-of-diamonds}.
\end{proof}

On the other hand, we can describe $\bOx$ explicitly in terms of $\O^\times$, at least on affinoid objects. This is easiest if $\Char K=0$, due to the $p$-adic exponential: (see \cite[\S2]{heuer-v_lb_rigid} for details)
\begin{Proposition}[{\cite[Lemmas 2.18 and 2.22]{heuer-v_lb_rigid}}]\label{p:bOx}
	Assume that $\Char K=0$. Let $X$ be a rigid or perfectoid space over $K$. Let $\tau$ be as in Definition~\ref{d:bOx}. Then the $p$-adic exponential $\exp\colon 2p\O^+\to 1+2p\O^+$ defines a short exact sequence on $X_{\tau}$
	\[ 0\to \O_{\tau}\xrightarrow{\exp} \O_{\tau}^\times\tf\to \bOx_{\tau}\to 1.\]
	In particular, we have $\nu_{\ast}\bOx_v=\bOx_{\et}$ and $R^1\nu_{\ast}\bOx_v=0$.
\end{Proposition}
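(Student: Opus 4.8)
The plan is to reduce everything to the classical fact that the $p$-adic exponential and logarithm are mutually inverse isomorphisms $\exp\colon 2p\O^+\isomarrow 1+2p\O^+$ of sheaves of abelian groups on $X_\tau$, additive on the left and multiplicative on the right. This convergence range is chosen so as to work uniformly for all $p$, including $p=2$; the power series converge termwise on bounded sections, so the construction is functorial in the test object and descends to a morphism of sheaves. The content of the Proposition is then to extend $\exp$ from the small neighbourhood $2p\O^+$ to all of $\O$ at the cost of inverting $p$ on the multiplicative side, and to identify the resulting cokernel.

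First I would define the extended map $\exp\colon \O_\tau\to \O^\times_\tau\tf$. Given a local section $a$ over an affinoid $U$, where $\O(U)=\O^+(U)\tf$, one has $p^Na\in 2p\O^+$ for $N\gg 0$; I then set $\exp(a)$ to be the class of $\exp(p^Na)\in 1+2p\O^+\subseteq\O^\times$ placed at the $N$-th stage of the colimit $\O^\times_\tau\tf=\varinjlim(\O^\times\xrightarrow{x\mapsto x^p}\O^\times\to\cdots)$. One checks this is independent of $N$ and a group homomorphism, the key point being that taking $p$-th roots is unobstructed in this colimit, so dividing $a$ by $p$ corresponds to passing to a deeper stage. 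Functoriality in $U$ makes $\exp$ a map of sheaves.

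Two things must then be verified. For injectivity, if $\exp(a)=1$ then $\exp(p^{N+k}a)=\exp(p^Na)^{p^k}=1$ in $\O^\times$ for some $k$, so $p^{N+k}a=0$ by injectivity of the classical $\exp$ on its convergence domain, whence $a=0$ as $\O$ is $p$-torsion free in characteristic $0$. For the image, the inclusion $\exp(\O)\subseteq \O^\times_{1\tau}\tf$ is immediate since $\exp(p^Na)\in 1+2p\O^+\subseteq 1+\m\O^+$. The reverse inclusion is the crux: given $b\in 1+\m\O^+$, the valuation of $b^{p^k}-1$ tends to infinity, so $b^{p^k}\in 1+2p\O^+$ for $k\gg 0$, and then $a_0:=\log(b^{p^k})\in 2p\O^+$ satisfies $\exp(p^{-k}a_0)=b$ in $\O^\times_\tau\tf$; as the image is closed under $p$-th roots this gives $\O^\times_{1\tau}\tf\subseteq\exp(\O)$. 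Hence $\exp(\O)=\O^\times_{1\tau}\tf$, and since the filtered colimit $\tf$ is exact we get $\coker(\exp)=\O^\times_\tau\tf/\O^\times_{1\tau}\tf=(\O^\times_\tau/\O^\times_{1\tau})\tf=\bOx_\tau$, which is the asserted short exact sequence.

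For the final assertions I would apply $R\nu_*$ for $\nu\colon X_v\to X_{\et}$ to the sequence over the $v$-site and compare with the same sequence over $X_{\et}$. Using the structure-sheaf comparison $R\nu_*\O_v=\O_{\et}$ for perfectoid spaces, so that $R^{\geq1}\nu_*\O_v=0$, together with the multiplicative comparison $R\nu_*\O^\times_v\tf=\O^\times_{\et}\tf$ in degrees $\leq 1$ (obtained by inverting $p$ in $R\nu_*\O^\times_v=\O^\times_{\et}$, which one may take from Theorem~\ref{t:v-Brauer-intro}, $R\nu_*$ commuting with the filtered colimit $\tf$), the long exact sequence collapses to $0\to\O_{\et}\to\O^\times_{\et}\tf\to\nu_*\bOx_v\to 0$ and to $R^1\nu_*\bOx_v\cong R^1\nu_*\O^\times_v\tf=0$. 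Matching the first line against the étale exponential sequence yields $\nu_*\bOx_v=\bOx_{\et}$.

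The main obstacle is the image identification: one must track the colimit defining $\tf$ carefully, checking independence of stage, the $p$-th-root computation, and that $\exp$ is genuinely a morphism of sheaves, and the surjectivity onto the principal units rests entirely on the estimate $b^{p^k}\in 1+2p\O^+$. This is precisely where the characteristic-$0$ hypothesis and the boundedness of sections of $\m\O^+$ enter; in the perfectoid case $\m$ contains elements of arbitrarily small positive valuation, so the estimate must be driven by the multiplicative growth of valuations under $p$-th powers rather than by a single uniform bound.
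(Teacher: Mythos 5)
Your proof of the short exact sequence itself is correct and is essentially the intended argument (the paper does not reprove this statement; it recalls it from \cite[Lemmas 2.18, 2.22]{heuer-v_lb_rigid}): one extends the classical isomorphism $\exp\colon 2p\O^+\isomarrow 1+2p\O^+$ to $\O\to\O^\times\tf$ by placing $\exp(p^Na)$ at the $N$-th stage of the colimit, gets injectivity from $p$-torsion-freeness of $\O$ in characteristic $0$, and identifies the image with $\O^\times_{1\tau}\tf$ via the estimate $b^{p^k}\in 1+2p\O^+$ for $k\gg0$. That estimate does close: locally a section of $1+\m\O^+$ satisfies $|b-1|\leq|\varepsilon|<1$ for some $\varepsilon\in\m$, and the recursion $\delta_{k+1}\leq\max(|p|\delta_k,\delta_k^p)\leq\delta_k\cdot\max(|p|,\delta_0^{p-1})$ gives geometric decay, which is exactly the ``multiplicative growth of valuations'' you invoke in your closing paragraph.

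The weak point is the ``in particular'' clause. You derive it from $R\nu_{\ast}\O^\times_v=\O^\times_{\et}$, citing Theorem~\ref{t:v-Brauer-intro}. But that is Theorem~\ref{t:v-Brauer} of \emph{this} paper — one of its advertised new results — and its proof in \S\ref{s:mult-HT} depends on the exponential sequence you are in the middle of establishing (via Lemma~\ref{l:bOx-in-arbitrary-char}.3 and Lemma~\ref{l:1+mO-acyclic-on-aff-perf}). The citation happens not to be circular, because only the short exact sequence, and never the pushforward clause of Proposition~\ref{p:bOx}, feeds into the chain of results leading to Theorem~\ref{t:v-Brauer}; but you neither notice this danger nor verify the dependency order, and a proof of a prerequisite that leans on a downstream theorem is fragile at best. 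The repair is cheap and is presumably what the cited source does: you only need $\nu_{\ast}\O^\times_v=\O^\times_{\et}$ (clear) and $R^1\nu_{\ast}\O^\times_v=0$, and the latter follows directly from Kedlaya--Liu (Theorem~\ref{t:KL-line-bundles}), since $R^1\nu_{\ast}\O^\times_v$ is the \'etale sheafification of $U\mapsto \Pic_v(U)=\Pic_{\an}(U)$, which dies locally. With that substitution, your collapse of the long exact sequence (using $R^{\geq1}\nu_{\ast}\O_v=0$ and exactness of $\tf$, which requires the standard fact that $R\nu_{\ast}$ commutes with this filtered colimit on qcqs objects) is exactly right. Note finally that your argument is specific to perfectoid $X$: for a rigid space one has $R^1\nu_{\ast}\O_v=\Omega^1\{-1\}\neq0$, so the long exact sequence does not collapse, and the pro-\'etale analogue for rigid $X$ (Corollary~\ref{c:bOx-for-rigid-proet}) requires the approximation argument instead; since the clause is stated for $\bOx_v$, the perfectoid reading is the intended one, but you should say so explicitly.
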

In characteristic $p$, we do not have an exponential sequence, but at least in the perfectoid case we instead have a logarithm sequence, well-known from $p$-adic Hodge theory:
\begin{Lemma}
Let $X$ be a perfectoid space over a perfectoid field of characteristic $p$. Then there is a short exact sequence on $X_{\proet}$:
\[ 0\to \Q_p(1)\to \O^\times_{1}\xrightarrow{\log\sharp} \O^\sharp\to 0.\]
\end{Lemma}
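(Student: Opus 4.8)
The plan is to construct $\log\sharp$ by hand and then read off its kernel and cokernel from the tilting description of the sheaf of principal units. First I would fix an untilt $K^\sharp$ of $K$, a perfectoid field of characteristic $0$ with $K^{\sharp\flat}=K$, giving an untilt $X^\sharp$ of $X$ and its structure sheaf $\O^\sharp$; recall that the tilting equivalence identifies the pro-\'etale sites $X_{\proet}\cong X^\sharp_{\proet}$, so that both $\O^\sharp$ and the Tate twist $\Q_p(1)$ (pulled back from $X^\sharp$) make sense as sheaves on $X_{\proet}$. Working locally on an affinoid perfectoid $\Spa(R,R^+)$, the multiplicative tilting correspondence for units restricts to a natural isomorphism of pro-\'etale sheaves
\[ \O^\times_1=1+\m\O^+ \isomarrow \varprojlim_{x\mapsto x^p}(1+\m^\sharp\O^{\sharp+}),\]
under which the sharp map $\sharp$ is the projection to the $0$-th component. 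I would then \emph{define} $\log\sharp$ as this projection followed by the $p$-adic logarithm $\log\colon 1+\m^\sharp\O^{\sharp+}\to\O^\sharp$, which converges since principal units of the untilt are $1$ plus a topologically nilpotent element; the result is visibly a homomorphism of abelian sheaves landing in $\O^\sharp=\O^{\sharp+}\tf$, which justifies the name $\log\sharp$.

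For the kernel I would use that the kernel of $\log$ on principal units of the untilt is exactly $\mu_{p^\infty}$. Pulling this back through $\varprojlim_{x\mapsto x^p}$, and noting that $p$-power roots of roots of unity are again roots of unity, the kernel of $\log\sharp$ is identified with $\varprojlim_{x\mapsto x^p}\mu_{p^\infty}(\O^\sharp)$, where the transition maps are $p$-th powers, i.e.\ multiplication by $p$ on $\mu_{p^\infty}$. This inverse limit is precisely the universal cover, or rational Tate module, $V_p\mu=T_p\mu\tf=\Q_p(1)$. Conceptually, this is the characteristic-$p$ phenomenon replacing the torsion kernel $\mu_{p^\infty}=\Q_p/\Z_p(1)$ of the characteristic-$0$ logarithm sequence: since $\O^\times_1$ is uniquely $p$-divisible in characteristic $p$ (the $p$-th power map $1+f\mapsto (1+f)^p=1+f^p$ is the Frobenius, which is bijective as $R$ is perfect), the torsion roots of unity get upgraded to their full rational Tate module.

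For surjectivity it suffices, as both maps are morphisms of sheaves, to check it on sufficiently fine pro-\'etale covers, and each constituent of $\log\sharp$ is then an epimorphism. The projection $\mathrm{pr}_0$ is surjective because a compatible system of $p$-power roots of any principal unit can be extracted on a pro-\'etale (Kummer) cover. The logarithm $\log$ is surjective because $\exp$ provides a local inverse onto a neighbourhood of $0$, and this spreads to all of $\O^\sharp=\bigcup_n p^{-n}\O^{\sharp+}$ using that dividing a value by $p$ corresponds to extracting a $p$-th root, again pro-\'etale-locally, via $\log(u^{1/p})=\tfrac1p\log u$. The main obstacle is the kernel computation: one must track the transition maps in the tilting limit carefully so as to obtain $\Q_p(1)=V_p\mu$ rather than $\Z_p(1)$ or $\mu_{p^\infty}$, whereas everything else reduces to the standard convergence and invertibility of $\log$ and $\exp$ on principal units of perfectoid spaces together with the tilting equivalence of pro-\'etale sites. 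Alternatively, one could package the whole statement as the transport across this equivalence of the classical fundamental exact sequence of $p$-adic Hodge theory on the untilt $X^\sharp$.
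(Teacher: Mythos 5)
The paper does not actually prove this lemma: it is stated without proof as ``well-known from $p$-adic Hodge theory'' (the only related ingredient that appears elsewhere is the characteristic-$0$ counterpart $0\to \Z_p(1)\to \O^{\times\flat}_1\xrightarrow{\sharp}\O^\times_1\to 1$, used in the proof of the acyclicity lemma for $\O^\times_1$). Your argument is a correct and complete rendition of exactly the standard proof the author is invoking: identify $\O^\times_1=1+\m\O^+$ on the tilt with $\varprojlim_{x\mapsto x^p}(1+\m^\sharp\O^{\sharp+})$ on the untilt, compose the zeroth projection with the convergent $p$-adic logarithm, read off the kernel as $\varprojlim_{x\mapsto x^p}\mu_{p^\infty}=V_p\mu=\Q_p(1)$, and get surjectivity pro-\'etale locally from $\exp$ together with Kummer towers. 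Two points that you use implicitly deserve a line each if this were written out in full. First, both inclusions in the identification $1+\m\O^+\cong\varprojlim_{x\mapsto x^p}(1+\m^\sharp\O^{\sharp+})$ need the standard estimate comparing $|u-1|$ on the tilt with $|u^\sharp-1|$ (via the ring isomorphism $\O^{+}/\varpi^\flat\cong\O^{\sharp+}/\varpi$), to see that the bound defining ``principal unit'' is preserved in both directions. Second, in the surjectivity step you extract $p$-power roots on a pro-finite-\'etale Kummer tower and silently use that any $p^n$-th root of a principal unit is again a principal unit; this is true, but it is a small valuation computation (if $(1+b)^{p^n}=1+c$ with $|c|\leq|\varpi|^{\epsilon}$, the binomial expansion forces $|b|\leq\max(|\varpi|^{\epsilon/p^n},|p|^{1/(p^n-1)})<1$ uniformly) rather than a formality, and without it the extracted roots would only be known to be units. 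Neither point affects the correctness of your approach, which matches the intended (unwritten) proof.
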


As $\O$ is acyclic on affinoid rigid and affinoid perfectoid spaces (in fact in much greater generality by \cite[Theorem 8.2.22]{KedlayaLiu-rel-p-p-adic-Hodge-I}), Proposition~\ref{p:bOx} allows us to describe the sections of $\bOx$ on affinoid objects in characteristic $0$. With some more work, this is possible in general:
\begin{Lemma}\label{l:bOx-in-arbitrary-char}
	Let $\tau$ be any one of the topologies used in Definition~\ref{d:bOx}.
	\begin{enumerate}
		\item 
		Let $U$ be an affinoid perfectoid space. Then the following map is surjective:
		\[ \O^\times_{\tau}(U)\to \bOx_{\tau}(U).\]
		In fact, this holds for any quasi-compact adic space $U$ with $H^1_\tau(U,\O^+/\varpi)\aeq 0$.
		\item Let $U$ be an affinoid rigid space. Then the following map is surjective:
		\[ \O^\times_{\tau}\tf(U)\to \bOx_{\tau}(U).\]
		\item In either case, $r_{\ast}\bOx_{\et}=\bOx_{\an}$ where $r:U_{\et}\to U_{\an}$ is the natural morphism of sites, and we have natural isomorphisms
		\[ \Pic_{\an}(U)\tf\isomarrow H^1_{\an}(U,\bOx_{\an})\isomarrow H^1_{\et}(U,\bOx_{\et}).\]
	\end{enumerate}
\end{Lemma}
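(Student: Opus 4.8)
The common strategy is to trade the multiplicative sheaf $\bOx$ for the additive sheaf $\O$ (when $\Char K=0$) or for $\O^\sharp$ (when $\Char K=p$), both of which are acyclic on affinoid rigid and affinoid perfectoid spaces, and then to compare the three topologies via the morphism $r$ together with a computation of $R^ir_\ast$.

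For the surjectivity statements (1) and (2) I would first treat $\Char K=0$, where the exponential sequence of Proposition~\ref{p:bOx}
\[ 0\to \O_\tau\xrightarrow{\exp}\O^\times_\tau\tf\to \bOx_\tau\to 1\]
gives a long exact sequence whose relevant segment is $\O^\times_\tau\tf(U)\to \bOx_\tau(U)\to H^1_\tau(U,\O)$. Since $\O$ is acyclic on affinoids for each $\tau\in\{\an,\et,\proet,v\}$, the obstruction $H^1_\tau(U,\O)$ vanishes and $\O^\times_\tau\tf(U)\to \bOx_\tau(U)$ is surjective, which is exactly (2). For (1) I must drop the $\tf$ on the source. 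When $U$ is perfectoid and $\Char K=p$ this is automatic, since Frobenius is bijective on $\O^+$ and hence $\O^\times_\tau(U)=\O^\times_\tau\tf(U)$; surjectivity then follows from the logarithm sequence of the preceding Lemma on the pro-\'etale site, the acyclicity of $\O^\sharp$, and pushing forward to $\tau=\an,\et$. When $U$ is perfectoid and $\Char K=0$, I would argue that $\bOx$, and already $\O^\times/\O^\times_1$, depends essentially on the reduction $\O^+/\m\O^+$, so that the $p$-power roots of units modulo principal units required to fill in (1) are supplied by the perfect structure of the tilt, reducing the statement to the characteristic $p$ case.

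For (3), the equality $r_\ast\bOx_\et=\bOx_\an$ may be checked on the basis of affinoid opens $V\subseteq U$, where $(r_\ast\bOx_\et)(V)=\bOx_\et(V)$. Using that $\O^\times$ and $\O^\times_1$ satisfy \'etale descent, so their sections over $V$ agree analytically and \'etale-locally, together with the surjectivity from (1)/(2) and the acyclicity of the additive kernel, both $\bOx_\an(V)$ and $\bOx_\et(V)$ are identified with the same cokernel, giving the claimed equality of sheaves. The isomorphism $\Pic_\an(U)\tf\isomarrow H^1_\an(U,\bOx_\an)$ then follows from the long exact sequence of the exponential (resp.\ logarithm) sequence: the additive terms $H^i_\an(U,\O)$ vanish for $i\geq 1$ on the affinoid $U$, so $H^1_\an(U,\O^\times_\an\tf)\isomarrow H^1_\an(U,\bOx_\an)$, and the left-hand side is $H^1_\an(U,\O^\times_\an)\tf=\Pic_\an(U)\tf$ since inverting $p$ commutes with cohomology. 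For $H^1_\an(U,\bOx_\an)\isomarrow H^1_\et(U,\bOx_\et)$ I would run the Leray spectral sequence for $r$; given $r_\ast\bOx_\et=\bOx_\an$, it remains to show $R^1r_\ast\bOx_\et=0$. This sheaf is the analytic sheafification of $V\mapsto H^1_\et(V,\bOx_\et)=\Pic_\et(V)\tf$, which vanishes because line bundles on a rigid or perfectoid space are locally trivial in the analytic topology by Theorem~\ref{t:KL-line-bundles} and \'etale--analytic descent.

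The main obstacle is the characteristic $p$ input: with no global exponential available, the acyclicity that drives every step must instead be extracted from the logarithm sequence and the acyclicity of $\O^\sharp$ in the perfectoid case, and from a separate analysis of $1+\m\O^+$ in the rigid case. Concretely, the crux is the vanishing of $H^1_\tau(U,\O^\times_{1\tau}\tf)$ for affinoid $U$ in characteristic $p$ (equivalently, that the connecting map $\bOx_\tau(U)\to H^1_\tau(U,\O^\times_{1\tau}\tf)$ is zero), together with the removal of the $\tf$ in part (1) via the tilting and Frobenius argument sketched above.
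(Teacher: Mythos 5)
Your characteristic-zero steps (exponential sequence plus acyclicity of $\O$ on affinoids) match the paper, and you correctly isolate the crux: surjectivity of $\O^\times_\tau(U)\to\bOx_\tau(U)$ \emph{without} inverting $p$ on the source, i.e.\ killing the connecting map into $H^1_\tau(U,\O^\times_{1})$. But your proposed resolution of that crux does not work: the logarithm sequence $0\to\Q_p(1)\to\O^\times_1\to\O^\sharp\to 0$ together with acyclicity of $\O^\sharp$ only exhibits $H^1_\tau(U,\O^\times_1)$ as a quotient of $H^1_\tau(U,\Q_p(1))$, and this group does \emph{not} vanish on affinoid perfectoid spaces (in characteristic $p$, Artin--Schreier gives $H^1(U,\F_p)=\coker(F-1\mid\O(U))$, which is generally nonzero). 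Indeed, in the paper the log sequence is used only in degrees $j\geq 2$ (Lemma~\ref{l:1+mO-acyclic-on-aff-perf}), and the degree-$1$ vanishing there is \emph{deduced from} the present lemma, so your route is circular. The missing idea is the paper's characteristic-free successive-approximation argument: write $\O^\times=\varprojlim_n\O^\times/(1+\varpi^n\O^+)$ and lift a section of $\O^\times/(1+\m\O^+)$ step by step, using the almost vanishing $H^1_\tau(U,\O^+/\varpi)\aeq 0$ together with a twist by $\varpi^\delta$ in the comparison of the two short exact sequences to absorb the ``almost''. This argument needs no exponential or logarithm and also makes your tilting detour for part (1) in characteristic $0$ unnecessary (it is vague as stated, and in any case inherits the characteristic-$p$ gap).

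Two further points. First, the rigid case in characteristic $p$ (part (2), and the Picard statements of part (3)) is left as ``a separate analysis of $1+\m\O^+$'' in your proposal; the paper settles it by comparing $U$ with its perfection $U^\perf$ via Proposition~\ref{p:cohom-comparison} (using that Frobenius on $\O$ induces multiplication by $p$ on $\bOx$) together with Gabber--Ramero in the form of Corollary~\ref{c:Gabber-Ramero}, reducing to the perfectoid case already established. Second, your proof that $R^1r_\ast\bOx_{\et}=0$ quotes $H^1_{\et}(V,\bOx)=\Pic_{\et}(V)\tf$ for affinoid $V$; in characteristic $p$ this is exactly the second isomorphism of part (3) that you are trying to prove, so it is again circular. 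The paper avoids this by first proving, for perfectoid $U$ and via the same lifting argument run in degree $1$, that $H^1_{\proet}(U,\O^\times)\isomarrow H^1_{\proet}(U,\bOx)$, and then invoking Theorem~\ref{t:KL-line-bundles} to identify the left-hand side with $\Pic_{\an}(U)$.
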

\begin{Remark}
	The affine torus $U=\Spa(K\langle X^{\pm1}\rangle)$ shows that $\O^\times_{\an}/\O^\times_{1\an}\to r_{\ast}(\O^\times_{\et}/\O^\times_{1\et})$ need not be surjective, so we need to invert $p$ in the definition of $\bOx_{\an}$ for part 3 to hold.
\end{Remark}
\begin{proof}
	We begin with the first statement:
	Since $\O^+=\varprojlim_{n\in\N} \O^+/\varpi^n$, it follows that  $\O^{+\times}=\varprojlim_{n\in\N} \O^{+\times}/(1+\varpi^n\O^+)$, from which we see by a 5-Lemma argument that
	\[\O^\times=\varprojlim_{n\in\N} \O^{\times}/(1+\varpi^n\O^+)\]
	(here and in the following, we omit $\tau$ from notation).
	Consequently, it suffices to prove that for $x\in \O^\times/\O^\times_{1}(U)$, we can find a system of compatible lifts of $x$ along
	\[\O^{\times}/(1+\varpi^n\O^+)(U)\to \O^\times/(1+\mathfrak m\O^+)(U).\]
	Since it suffices to prove the statement for $\tau$ finer or equal to $\et$, and since $U$ is perfectoid, the sheaf $\O^\times/\O^\times_{1}$ is already $p$-divisible, so this implies the result for $\bOx$.
	
	To prove the lifting statement, we argue inductively:  Using that 
	\[\O^\times/(1+\mathfrak m\O^+)=\varinjlim_{\epsilon\to 0} \O^\times/(1+\varpi^{\epsilon}\O^+),\]
	we see that $x$ lifts to $x'\in\O^\times/(1+\varpi^{\epsilon}\O^+)(U)$ for some $\epsilon>0$. After replacing the pseudo-uniformiser $\varpi$ by $\varpi^\epsilon$, let us for simplicity of notation assume $\epsilon=1$. We now use a standard lifting argument: For any $1>\delta>0$, consider the morphism of short exact sequences
	\[
	\begin{tikzcd}[column sep = 0.8cm]
		0 \arrow[r] & \O^+/\varpi \arrow[d, "\cdot \varpi^{\delta}"] \arrow[rr,"x\mapsto 1+\varpi x"] && \O^{\times}/(1+\varpi^{2}\O^+) \arrow[d] \arrow[r] & \O^\times/(1+\varpi \O^+) \arrow[d] \arrow[r]&0 \\
		0 \arrow[r] & \O^+/\varpi  \arrow[rr,"x\mapsto 1+x \varpi^{1-\delta}"]                                              && \O^\times/(1+\varpi^{2-\delta}\O^+) \arrow[r]    & \O^\times/(1+\varpi^{1-\delta}\O^+) \arrow[r]  &0
	\end{tikzcd} \]
	Since $H^1_{\tau}(U,\O^+/\varpi)\aeq 0$, this shows that the image of $x'$ in $\O^\times/(1+\varpi^{1-\delta}\O^+)(U)$ can be lifted to $x''\in \O^\times/(1+\varpi^{2-\delta}\O^+)(U)$. The image of $x''$ in  $\O^\times/(1+\varpi^{2-2\delta}\O^+)$ can now be lifted further. Continuing inductively, this gives the desired system of compatible lifts. 
	
	Parts 2 and 3 are clear if $\Char K=0$ by the exponential sequence and acyclicity of $\O$. 
	
	In characteristic $p$, let $U_\infty:=U^\perf$ and consider the commutative diagram
	\[
	\begin{tikzcd}
		\bOx(U) \arrow[r]                   & \bOx(U_\infty)                                 \\
		\O^\times(U)/\U(U) \arrow[u] \arrow[r]& \O^\times(U_\infty)/\U(U_\infty). \arrow[u]
	\end{tikzcd}\]
	We claim that  all its arrows become isomorphisms after inverting $p$: For the morphism on the right, this holds by the first part. For the bottom map, this holds by \cite[Lemma 3.10]{heuer-diamantine-Picard}. %
	 The top map is an isomorphism by Proposition~\ref{p:cohom-comparison} for $j=0$: Here we use that the absolute Frobenius on $\O_X$ can on $\O^\times$ be identified with multiplication by $p$, and thus the same holds for its quotient $\bOx$. 
	Together, this implies that the left map becomes an isomorphism.
	
	Consequently, we have $\bOx_\tau(U)=\O^\times_\tau(U)/\O^\times_{1\tau}(U)\tf$ for any one of the various $\tau$. It follows from this that $\bOx_\an=r_{\ast}\bOx_{\et}$.

	For the statement about $\Pic(U)$, we first consider affinoid perfectoid $U$: Here we note that the above lifting argument, but applied in degree 1 and applied to $U_{\proet}$, also shows that
	\[ H^1_{\proet}(U,\O^\times)=\varprojlim_nH^1_{\proet}(U,\O^\times/(1+\varpi^n\O^+))\isomarrow H^1_{\proet}(U,\bOx)\]
	is an isomorphism. The statement for perfectoid $U$ thus
	follows from the fact that we have $H^1_{\an}(U,\O^\times)=H^1_{\proet}(U,\O^\times)$ by Theorem~\ref{t:KL-line-bundles}. In particular, this implies directly that we have $R^{1}(r\circ\nu)_{\ast}\bOx=0$ where $r\circ\nu:U_{\proet}\to U_{\et}\to U_{\an}$. 
	
	The statement for rigid $U$ in characteristic $p$ now follows from this and the commutative diagram:
	\[
	\begin{tikzcd}
		{H^1_{\tau}(U,\bOx)} \arrow[r, "\sim"] & {H^1_{\tau}(U_\infty,\bOx).}\\
			\Pic_{\an}(U)\tf \arrow[r, "\sim"] \arrow[u] & \Pic_{\an}(U_\infty)\tf \arrow[u, "\sim"labelrotatep]
	\end{tikzcd}\]
	The bottom arrow is an isomorphism by Proposition~\ref{p:Gabber-Ramero-5.4.42}. The top map is an isomorphism by Proposition~\ref{p:cohom-comparison}.
	The right map is an isomorphism by the above, thus so is the left one.
\end{proof}
For perfectoid $X$, a variant of part 3 also holds in higher degree, even before inverting $p$.
\begin{Lemma}\label{l:1+mO-acyclic-on-aff-perf}
	Let $X$ be affinoid perfectoid over $K$, then for $j\geq 1$ and $\tau=v,\proet$ we have $H^j_{\tau}(X,\O^\times_1)=1$.
	In particular, we have $H^j_{\tau}(X,\O^\times)=H^j_{\tau}(X,\bOx)$.
\end{Lemma}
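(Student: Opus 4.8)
The plan is to treat the ``in particular'' as a formal consequence and to concentrate on the vanishing $H^j_\tau(X,\O^\times_1)=1$ for $j\geq 1$. For the formal part: since $\tau$ is finer than $\et$, Lemma~\ref{l:bOx-p-divisible} identifies $\O^\times/\O^\times_1$ with $\bOx_\tau$, so the short exact sequence $1\to \O^\times_1\to \O^\times\to \bOx\to 1$ yields a long exact cohomology sequence in which the two terms $H^j(X,\O^\times_1)$ and $H^{j+1}(X,\O^\times_1)$ flanking $H^j(X,\O^\times)\to H^j(X,\bOx)$ are trivial for $j\geq 1$; hence this map is an isomorphism, giving the ``in particular''.

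For the main vanishing I would argue by the multiplicative $\varpi$-adic filtration together with repleteness. First I record the additive input: on an affinoid perfectoid $X$ one has $H^j_\tau(X,\O^+)\aeq 0$ and $H^j_\tau(X,\O^+/\varpi)\aeq 0$ for $j\geq 1$ and $\tau=v,\proet$, and by dévissage the same holds for every additive quotient $\varpi^a\O^+/\varpi^b\O^+$ with $0\leq a<b$. Next, writing $W_c:=1+\varpi^c\O^+$, the estimate $(1+\varpi^c x)(1+\varpi^c y)=1+\varpi^c(x+y)+\varpi^{2c}xy$ shows that for $c'\leq 2c$ the quotient $W_c/W_{c'}$ is the additive sheaf $\varpi^c\O^+/\varpi^{c'}\O^+$. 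Filtering $W_c$ by the doubling system $W_c\supseteq W_{2c}\supseteq W_{4c}\supseteq\cdots$, whose intersection is trivial and whose successive quotients are additive of the above type, repleteness (Lemma~\ref{l:qproet-replete}) and Corollary~\ref{c:replete-derived-complete} give $R\Gamma(X,W_c)=\Rlim_k R\Gamma(X,W_c/W_{2^k c})$; each $R\Gamma(X,W_c/W_{2^k c})$ carries a finite filtration with additive graded pieces, so that $H^j_\tau(X,W_c)$ is \emph{almost} zero for $j\geq 1$.

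The sheaf of interest is $\O^\times_1=1+\m\O^+=\varinjlim_{c\to 0}W_c$, the increasing union over $c=2^{-k}\to 0$. Since $\tau$-cohomology of the qcqs space $X$ commutes with filtered colimits of sheaves, $H^j_\tau(X,\O^\times_1)=\varinjlim_{c\to 0}H^j_\tau(X,W_c)$. Here I would upgrade the almost-vanishing to genuine vanishing exactly as in the additive computation of $H^j_\tau(X,\m\O^+)$, where $\m\O^+=\varinjlim_c\varpi^c\O^+$ and the transition maps are multiplication by positive powers of $\varpi$: under the trivialisations identifying the additive graded pieces of $W_c$ with those of $W_{c/2}$, the transition inclusion $W_c\hookrightarrow W_{c/2}$ acts on associated graded as multiplication by a positive power of $\varpi$, and such a map annihilates any almost-zero group. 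Hence every class in $H^j_\tau(X,W_c)$ dies in $H^j_\tau(X,W_{c/2})$, the colimit vanishes, and $H^j_\tau(X,\O^\times_1)=1$ for $j\geq 1$.

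The main obstacle is precisely this last upgrade from almost-vanishing to genuine vanishing: the groups $H^j_\tau(X,W_c)$ are cohomology of \emph{multiplicative} sheaves and carry no $\O_K$-module structure, so the almost-mathematics must be run on the additive graded pieces and then transported compatibly through the filtration and the colimit. Making the compatibility of the doubling filtrations of $W_c$ and $W_{c/2}$ precise --- and checking that the $R^1\!\lim$ contributions coming from repleteness at $j=1$ interact correctly with the colimit --- is the delicate bookkeeping; everything else is a formal consequence of the almost acyclicity of $\O^+/\varpi$ and of repleteness, and the argument is uniform in the characteristic of $K$.
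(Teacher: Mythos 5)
Your reduction of the ``in particular'' part to the vanishing statement is correct, and it is how the paper argues as well. The vanishing argument itself, however, has a genuine gap, located exactly at the step you set aside as ``delicate bookkeeping'': that step is not bookkeeping but the mathematical content of the lemma, and the mechanism you propose for it fails. Writing $W_c=1+\varpi^c\O^+$, note that with respect to the doubling filtrations $W_c$ \emph{is} the first filtration step of $W_{c/2}$, since $W_c=W_{2\cdot(c/2)}$; hence the inclusion $W_c\hookrightarrow W_{c/2}$ shifts filtration degree by one, and on associated graded pieces it induces the \emph{zero} map (your ``multiplication by a positive power of $\varpi$'' is multiplication by $\varpi^{2^{k-1}c}$ landing in a group killed by $\varpi^{2^{k-1}c}$, i.e.\ the zero map). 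The fatal point is that vanishing on associated graded pieces does not imply vanishing of the induced map on cohomology: $\cdot\,\varpi\colon\O^+\to\O^+$ induces zero on every graded piece of the $\varpi$-adic filtration but is injective, and in the case at hand $H^0_\tau(X,W_c)\to H^0_\tau(X,W_{c/2})$ is injective even though the sheaf-level graded maps vanish. For $j\geq1$, what the long exact sequence of $1\to W_c\to W_{c/2}\to\O^+/\varpi^{c/2}\O^+\to1$ actually shows is that the transition map $H^j_\tau(X,W_c)\to H^j_\tau(X,W_{c/2})$ has cokernel embedding into an almost zero module and kernel equal to the image of a boundary map; nothing forces it to be zero, so your claim that every class dies at the very next step of the colimit is unsupported. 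Note also that ``$H^j_\tau(X,W_c)$ is almost zero'' is not even a well-formed assertion: these are cohomology groups of multiplicative sheaves carrying no $\O_K$-module structure, and the property of being an iterated extension of subquotients of almost zero modules is much weaker and does not propagate through $\varprojlim$, $R^1\varprojlim$ and the colimit in the way your argument needs.

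What actually makes the lemma true is a multiplicative-to-additive comparison at the level of \emph{sheaves}, and this is the input your proof is missing. The paper's $j=1$ case is, in effect, the correct implementation of your philosophy, but run in the opposite direction: one works with the \emph{inverse} system of quotients $\O^\times/(1+\varpi^n\O^+)$ and lifts classes up the tower, where the loss-of-$\varpi^\delta$ trick in the proof of Lemma~\ref{l:bOx-in-arbitrary-char} makes the almost-mathematics errors summable and repleteness (Corollary~\ref{c:replete-derived-complete}) handles the limit. For $j\geq2$ the paper does not use the $\varpi$-adic filtration at all: it kills $H^j_\tau(X,\Z_p)$ via Artin--Schreier plus repleteness in characteristic $p$ (reducing characteristic $0$ to characteristic $p$ by tilting), and then invokes the logarithm sequence $0\to\Q_p(1)\to\O^\times_1\to\O^\sharp\to0$, which is precisely a sheaf-level identification of $\O^\times_1$ with an additive sheaf up to $\Q_p(1)$. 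This also shows that your closing claim of uniformity in the characteristic is where the approach is weakest: in characteristic $p$ there is no exponential or logarithm on $W_c$ at all (the paper's logarithm goes through the untilt), so the filtration argument has no fallback there. A smaller, secondary issue: the commutation of $v$- and pro-\'etale cohomology of an affinoid perfectoid with filtered colimits of sheaves is not automatic for these sites (as it is for the \'etale site of a spatial diamond) and would itself require a coherence argument.
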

\begin{proof}
	The case of $j=1$ follows from Lemma~\ref{l:bOx-in-arbitrary-char}.1 and 3, here we use that in the proof of part 3 we have seen by a lifting argument that in fact $H^1_{\et}(X,\O^\times)=H^1_{\et}(X,\bOx)$.

	For $j\geq 2$, recall that we have $H^i_{\tau}(X,\O)=0$ for $i\geq 1$ by \cite[Propositions~8.5, 8.8]{etale-cohomology-of-diamonds}.
	If $\Char K=p$, it follows from the Artin--Schreier sequence that $H^j_{\tau}(X,\F_p)=0$ for $j\geq 2$, thus $H^j_{\tau}(X,\Z_p)=0$ by Corollary~\ref{c:replete-derived-complete}. The logarithm sequence implies that  $H^j_{\tau}(X,\O^\times_1)=0$ for $j\geq 2$. The case of $\Char K=0$ follows from the sequence $0\to \Z_p(1)\to   \O^{\times\flat}_1\xrightarrow{\sharp} \O^\times_1\to 1$.
\end{proof}
\subsection{A multiplicative Hodge--Tate spectral sequence}\label{s:mult-HT}
As pointed out to us by Peter Scholze, the above results imply the following improvement of the last part of Proposition~\ref{p:bOx}: This will not be needed in the following, but we think it is interesting in its own right and therefore worth recording. We begin with some notation:
\begin{Definition}
	Let $\theta:W(\O_{K^\flat})\to \O_K$ be Fontaine's map.
	For $i\in \Z$, we denote by $\O_K\{i\}:=(\ker \theta)^i/(\ker\theta)^{i+1}$ the $i$-th Breuil--Kisin--Fargues twist of $\O_K$. This is a finite free $\O_K$-module of rank $1$, and any choice of a generator of the principal ideal $\ker \theta$ induces an isomorphism $\O_K\{i\}\cong\O_K$. By \cite[Example~4.24]{BMS}, if $K$ contains all $p$-power unit roots, then $\O_K\{i\}=\O_K(i)$ can be identified with the $i$-th Tate twist.
	For any $\O_K$-module $M$ we set
	\[ M\{i\}:=M\otimes_{\O_K}\O_K\{i\}.\]
	Let us reiterate that this is isomorphic to $M$, but only non-canonically so.
\end{Definition}
\begin{Theorem}\label{t:v-Brauer}
	Let $K$ be any perfectoid field. 
	\begin{enumerate}
	\item Let $X$ be a perfectoid space over $K$, or assume that $\Char K=0$ and let $X$ be a smooth rigid space over $K$. Then
	\[ R\nu_{\ast}\bOx_v=\bOx_{\et}.\]
	\item If $X$ is perfectoid, then also
	\[R\nu_{\ast}\O^\times_v=\O^\times_{\et}.\]
	\item If $X$ is rigid, we instead have for  $i\geq 1$ a natural ``Hodge--Tate logarithm'' isomorphism
	\[\HT\log :R^i\nu_{\ast}\O^\times_v\isomarrow \Omega^i_{X_{\et}}\{-i\}.\]
	\end{enumerate}
\end{Theorem}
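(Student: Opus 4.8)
The plan is to establish part (1) first and to deduce parts (2) and (3) from it, the common engine being the short exact sequence
\[ 1\to \O^\times_1 \to \O^\times \to \bOx \to 1 \quad\text{on } X_v,\]
which is exact because inverting $p$ is harmless on any topology finer than the \'etale one (Lemma~\ref{l:bOx-p-divisible}). The behaviour of $R\nu_\ast$ on the two outer terms is what drives everything. For $\O^\times_1$ I would use Lemma~\ref{l:1+mO-acyclic-on-aff-perf}: on an affinoid perfectoid $U$ one has $H^j_v(U,\O^\times_1)=1$ for $j\geq 1$, and since such $U$ form a basis of $X_{\et}$ for perfectoid $X$, this gives $R\nu_\ast\O^\times_{1,v}=\O^\times_{1,\et}$. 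Granting part (1), namely $R\nu_\ast\bOx_v=\bOx_{\et}$, the long exact sequence of $R\nu_\ast$ compares termwise with the \'etale sequence $1\to\O^\times_{1,\et}\to\O^\times_{\et}\to\bOx_{\et}\to1$ and forces $R\nu_\ast\O^\times_v=\O^\times_{\et}$, which is part (2).

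For part (3), with $X$ smooth rigid of characteristic $0$, the same sequence is used but $R\nu_\ast\O^\times_{1,v}$ is no longer concentrated in degree $0$: via the $p$-adic logarithm and exponential (as in Proposition~\ref{p:bOx}) I would identify $R^i\nu_\ast\O^\times_{1,v}$ with $R^i\nu_\ast\O_v$ for $i\geq 0$, and then invoke the classical Hodge--Tate comparison $R^i\nu_\ast\O_v=\Omega^i_{X_{\et}}\{-i\}$. Since part (1) forces $R^i\nu_\ast\bOx_v=0$ for $i\geq 1$ and $\nu_\ast\bOx_v=\bOx_{\et}$, the long exact sequence yields $\nu_\ast\O^\times_v=\O^\times_{\et}$ together with isomorphisms $R^i\nu_\ast\O^\times_v\isomarrow R^i\nu_\ast\O^\times_{1,v}=\Omega^i_{X_{\et}}\{-i\}$ for $i\geq 1$; tracing the connecting maps identifies this composite with the asserted map $\HT\log$.

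It remains to prove part (1). For perfectoid $X$ the assertion is local on $X_{\et}$, so I would evaluate $R\Gamma_v(U,\bOx)$ on affinoid perfectoid $U$. In degree $0$ this equals $\bOx(U)$ by Lemma~\ref{l:bOx-in-arbitrary-char}, so $\nu_\ast\bOx_v=\bOx_{\et}$; in degree $1$ we get $H^1_v(U,\bOx)=H^1_v(U,\O^\times)=\Pic_v(U)=\Pic_{\an}(U)$ using Lemma~\ref{l:1+mO-acyclic-on-aff-perf} and Theorem~\ref{t:KL-line-bundles}, which sheafifies to $0$ and hence $R^1\nu_\ast\bOx_v=0$. (In characteristic $0$, for rigid as well as perfectoid $X$, both facts are already part of Proposition~\ref{p:bOx}.) The genuine content is therefore the higher vanishing $R^j\nu_\ast\bOx_v=0$ for $j\geq 2$, equivalently $R^j\nu_\ast\O^\times_v=0$ for $j\geq 2$ by Lemma~\ref{l:1+mO-acyclic-on-aff-perf}; this is the step I expect to be the main obstacle.

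To attack it I would factor $\nu$ as $X_v\xrightarrow{\lambda}X_{\proet}\xrightarrow{u}X_{\et}$, invoke $Ru_\ast\bOx_{\proet}=\bOx_{\et}$ from Corollary~\ref{c:bOx-for-rigid-proet} (valid for perfectoid $X$ and for rigid $X$ of characteristic $0$), and reduce to $R\lambda_\ast\bOx_v=\bOx_{\proet}$. This may be tested on a basis of $X_{\proet}$ by strictly totally disconnected $U$, which are affinoid perfectoid and pro-\'etale-acyclic in positive degrees, so the claim becomes $H^j_v(U,\bOx)=0$ for $j\geq 1$; note this simultaneously settles the rigid case, since such $U$ are perfectoid. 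The decisive structural input is that $\bOx$ is \emph{finitary}: by Proposition~\ref{p:cohom-comparison} its cohomology commutes with the affinoid perfectoid tilde-limits arising when one refines $v$-covers, in sharp contrast to $\O$, whose failure to be finitary is exactly what produces the Hodge--Tate terms $\Omega^i_{X_{\et}}\{-i\}$ above. The obstacle is precisely the passage from pro-\'etale to $v$: a general $v$-cover of $U$ is not pro-\'etale, so one must show that the extra cohomology such covers could contribute is annihilated by the finitary property. I would do this by a $v$-descent argument, feeding Proposition~\ref{p:cohom-comparison} into the descent spectral sequence of a $v$-hypercover of $U$ by affinoid perfectoid tilde-limits and checking that its terms collapse to the degree-$0$ sections over the strictly totally disconnected base; carrying this out so as to obtain vanishing in every degree $j\geq 2$ is the technical heart of the theorem.
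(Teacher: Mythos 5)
Your overall architecture is the same as the paper's: factor $\nu$ as $X_v\to X_{\proet}\xrightarrow{u} X_{\et}$, dispose of $u$ by Corollary~\ref{c:bOx-for-rigid-proet}, reduce to a strictly totally disconnected affinoid perfectoid base, and then deduce parts 2 and 3 from part 1 using the unit sequences, Lemma~\ref{l:1+mO-acyclic-on-aff-perf} and $R^i\nu_\ast\O=\Omega^i_{X_{\et}}\{-i\}$. The problem is the step you yourself flag as the ``technical heart'': the vanishing $H^j_v(X,\bOx)=1$ for $j\geq 1$ when $X$ is strictly totally disconnected. Your proposal to feed Proposition~\ref{p:cohom-comparison} into the descent spectral sequence of a $v$-hypercover ``and check that its terms collapse'' is a genuine gap, not a routine verification: finitariness of $\bOx$ has no mechanism to collapse anything. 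What Proposition~\ref{p:cohom-comparison} actually buys is this: writing a trivialising affinoid perfectoid $v$-cover $Y\to X$ as a tilde-limit $Y\sim\varprojlim_i Y_i$ of open subspaces $Y_i\subseteq \B^n_X$ of relative balls (rigid approximation as in \cite[Proposition~3.11]{heuer-diamantine-Picard}), the relevant \cH{} class descends to some finite level $\cH^q(Y_i\to X,\bOx)$ --- but nothing so far kills it. The missing idea is geometric and specific to strictly totally disconnected bases: any such surjective open $f\colon Y_i\subseteq\B^n_X\to X$ admits a section (Lemma~\ref{l:local-splitting-over-stds}, i.e.\ \cite[Lemma~9.5]{etale-cohomology-of-diamonds}), whence $H^j_v(X,\bOx)\to H^j_v(Y_i,\bOx)$ is injective and the class dies. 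Without this splitting (or a substitute for it) your hypercover argument does not close; the paper also first reduces to characteristic $p$ via $\bOx_v(Y)=\bOx_v(Y^\flat)$ and uses an induction with \cH-to-sheaf sequences rather than a hypercover, but those differences are cosmetic.

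Two smaller points. In part 3 you ``identify $R^i\nu_\ast\O^\times_{1,v}$ with $R^i\nu_\ast\O_v$ for $i\geq 0$'': this is false for $i=0$, and for $i\geq 1$ the exponential of Proposition~\ref{p:bOx} only identifies $\O$ with $\O^\times_{1}\tf$ (equivalently $\O^\times\tf/\bOx$-kernels), so to pass from $\O^\times_1\tf$ (or $\O^\times\tf$) back to $\O^\times$ you still need $R^i\nu_\ast\mu_{p^\infty}=0$ for $i\geq 1$, which is the paper's Kummer-sequence step; this is standard but must be said, since it is exactly where the $p$-power torsion you are quotienting by gets controlled. Also, injectivity of $\exp$ on $R^1\nu_\ast$ requires the degree-one input from Lemma~\ref{l:bOx-in-arbitrary-char}, which your sketch uses only implicitly.
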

The last part generalises \cite[Proposition~2.21, Corollary~2.28]{heuer-v_lb_rigid}. In particular, we obtain the following generalisation of Theorem~2.29 in \textit{loc.\ cit.}:
\begin{Corollary}\label{c:mult-HT}
	 There is a ``multiplicative Hodge--Tate'' spectral sequence
\[ E_2^{ij}:=\begin{dcases}\begin{rcases}
H^i_{\et}(X,\O^\times)& \text{ if }j=0\\
H^i_{\et}(X,\Omega_X^j\{-j\})& \text{ if }j>0	
\end{rcases}\end{dcases}\Rightarrow H^{i+j}_{v}(X,\O^\times).
\]
\end{Corollary}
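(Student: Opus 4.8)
The plan is to obtain this as the Grothendieck (Leray) spectral sequence for the morphism of sites $\nu\colon X_v\to X_{\et}$ applied to the abelian sheaf $\O^\times_v$, where $X$ is a smooth rigid space over a perfectoid field of characteristic $0$ (the setting in which the rigid case of Theorem~\ref{t:v-Brauer} applies), and then to substitute the computation of the higher direct images from that theorem. First I would recall the general mechanism: since the pullback $\nu^{\ast}$ is exact, its right adjoint $\nu_{\ast}$ preserves injectives and hence sends injective $v$-sheaves to $\Gamma(X_{\et},-)$-acyclic \'etale sheaves. This is precisely the hypothesis needed to factor the $v$-site global sections functor as $\Gamma(X_v,-)=\Gamma(X_{\et},-)\circ\nu_{\ast}$ through a Grothendieck spectral sequence. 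As both indices are non-negative, it is first-quadrant and converges, yielding
\[ E_2^{ij}=H^i_{\et}(X,R^j\nu_{\ast}\O^\times_v)\Rightarrow H^{i+j}_v(X,\O^\times). \]

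The remaining step is to identify the coefficient sheaves $R^j\nu_{\ast}\O^\times_v$ on $X_{\et}$, and here I would simply invoke Theorem~\ref{t:v-Brauer}. For $j=0$, the theorem gives $\nu_{\ast}\O^\times_v=\O^\times_{\et}$, so the bottom row becomes $E_2^{i0}=H^i_{\et}(X,\O^\times)$. For $j\geq 1$, the Hodge--Tate logarithm isomorphism $\HT\log\colon R^j\nu_{\ast}\O^\times_v\isomarrow\Omega^j_{X_{\et}}\{-j\}$ from the same theorem identifies the higher rows as $E_2^{ij}=H^i_{\et}(X,\Omega_X^j\{-j\})$. Substituting these into the display above gives exactly the asserted $E_2$-page with the same abutment $H^{i+j}_v(X,\O^\times)$.

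I expect that there is no genuine obstacle at this stage: all the geometric content has already been absorbed into Theorem~\ref{t:v-Brauer}, and what is left is the formal machinery of a composite-functor spectral sequence. The one point I would be careful to record is the acyclicity of $\nu_{\ast}$ on injectives, which I noted above and which is standard. I would also remark, to avoid confusion, that $\O^\times$ is treated throughout as a sheaf of abelian groups written multiplicatively, so that the homological algebra of abelian sheaves applies without change and the Breuil--Kisin--Fargues twists $\{-j\}$ in the higher rows are exactly those produced by Theorem~\ref{t:v-Brauer}.
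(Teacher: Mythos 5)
Your proposal is correct and is exactly the argument the paper intends: the corollary is stated as an immediate consequence of Theorem~\ref{t:v-Brauer}, obtained by running the Leray (composite-functor) spectral sequence for $\nu\colon X_v\to X_{\et}$ with coefficients $\O^\times_v$ and substituting $\nu_{\ast}\O^\times_v=\O^\times_{\et}$ and $R^j\nu_{\ast}\O^\times_v\cong\Omega^j_{X_{\et}}\{-j\}$ for $j\geq 1$. Your extra remarks (exactness of $\nu^{\ast}$, hence $\nu_{\ast}$ preserving injectives, and the first-quadrant convergence) are the standard points and match what the paper leaves implicit.
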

\begin{Remark}
	This is in analogy with the Hodge--Tate spectral sequence  \cite[\S3.3]{Scholze2012Survey}
		\[ H^i_{\et}(X,\Omega_X^j\{-j\})\Rightarrow H^{i+j}_{v}(X,\O).
		\]
	If $X$ is proper and $K$ is algebraically closed, then $H^{i+j}_{v}(X,\O)=H^{i+j}_{\et}(X,\Q_p)\otimes_{\Q_p}K$  and the latter sequence degenerates \cite[Theorem~13.3.(ii)]{BMS}. It is natural to ask whether a similar result holds for the multiplicative Hodge--Tate sequence, in accordance with   \cite[Theorem~1.3.2a]{heuer-v_lb_rigid}. It will be shown in \cite{Gerth2024HT-rigid-coeff} that this is indeed the case.

	We note that these two cases of $\G_a$ and $\G_m$ combine to show that Corollary~\ref{c:mult-HT} holds more generally for $\O^\times$ replaced with $G(\O)$ where $G$ is any commutative linear algebraic group over an algebraically closed $K$.
\end{Remark}
\begin{Corollary}
	Let $X$ be an affinoid perfectoid space over $K$, then for any $j\geq 1$ we have
	\[H^j_{\et}(X,\O^\times)=H^j_{\et}(X,\bOx).\]
	In particular, $H^j_{\et}(X,\O^\times)$ is uniquely $p$-divisible.
\end{Corollary}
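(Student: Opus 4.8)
The plan is to reduce the computation to the $v$-site, where the sheaf of principal units $\O^\times_1=1+\m\O^+$ is acyclic in positive degrees, and then transport the conclusion back to $X_{\et}$ using the comparison isomorphisms of Theorem~\ref{t:v-Brauer}. The naive strategy of applying the long exact cohomology sequence of $1\to \O^\times_1\to \O^\times\to \bOx\to 1$ directly on $X_{\et}$ fails: the vanishing $H^j(X,\O^\times_1)=1$ for $j\geq 1$ is only available on the replete sites $X_v$ and $X_{\proet}$, since its proof in Lemma~\ref{l:1+mO-acyclic-on-aff-perf} runs through the logarithm sequence and the identification $\Rlim=\lim$, both of which are genuinely pro-\'etale/$v$-phenomena. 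Recognising that one must route through the $v$-topology is the only real subtlety here.

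Concretely, I would first invoke Lemma~\ref{l:1+mO-acyclic-on-aff-perf}, which already records that for affinoid perfectoid $X$ and $j\geq 1$ one has $H^j_v(X,\O^\times)=H^j_v(X,\bOx)$; this is exactly the long exact sequence argument carried out on $X_v$, where $\O^\times_1$ is acyclic. Next I would transport each side to $X_{\et}$. Since $X$ is perfectoid, Theorem~\ref{t:v-Brauer}.2 gives $R\nu_{\ast}\O^\times_v=\O^\times_{\et}$ for $\nu:X_v\to X_{\et}$, so the Leray spectral sequence collapses to $H^j_{\et}(X,\O^\times)=H^j_v(X,\O^\times)$; likewise Theorem~\ref{t:v-Brauer}.1 gives $R\nu_{\ast}\bOx_v=\bOx_{\et}$ and hence $H^j_{\et}(X,\bOx)=H^j_v(X,\bOx)$. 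Chaining these three isomorphisms yields $H^j_{\et}(X,\O^\times)=H^j_{\et}(X,\bOx)$ for all $j\geq 1$.

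Finally, for the unique $p$-divisibility, by Lemma~\ref{l:bOx-p-divisible} the sheaf $\bOx_{\et}=\O^\times_{\et}/\O^\times_{1\et}$ is already uniquely $p$-divisible, i.e.\ the $p$-power map is an automorphism of it as an abelian sheaf. Functoriality of cohomology then makes the $p$-power map an isomorphism on each $H^j_{\et}(X,\bOx)$, so this group is uniquely $p$-divisible, and by the identification just established so is $H^j_{\et}(X,\O^\times)$. I do not expect any computational obstacle in this last step; the entire content of the argument is the bookkeeping of which topology supports which vanishing statement.
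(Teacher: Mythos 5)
Your proof is correct and takes essentially the same route as the paper: the paper's own proof likewise combines Theorem~\ref{t:v-Brauer}.1--2 with Lemma~\ref{l:1+mO-acyclic-on-aff-perf} and gets unique $p$-divisibility from Lemma~\ref{l:bOx-p-divisible}, the only (immaterial) difference being that the paper first deduces $R\nu_{\ast}\O^\times_{1}=\O^\times_{1\et}$ and runs the long exact sequence on $X_{\et}$, whereas you run it on $X_v$ and transport the two outer terms separately.
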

\begin{Remark}
\begin{enumerate}
	\item That $\Pic(X)=H^1_{\et}(X,\O^\times)$ is uniquely $p$-divisible was already observed by Bhatt--Scholze in \cite[Corollary 9.7]{MR4502597}. The case of $j\geq 2$ is related to \cite[Theorem~4.10]{Cesnavicius_Brauergroup}, which proves that $H^j_{\et}(\Spec(R),\O^\times)$ is uniquely $p$-divisible for $j\geq 2$ where $R=\O(X)$. 
\item We will see in \S6 that the assumption on $X$ to be affinoid is necessary.
\end{enumerate}
\end{Remark}
\begin{proof}
	It follows from Theorem~\ref{t:v-Brauer}.1-2 that $R\nu_{\ast}\O^\times_{1}=\O^\times_1$. The first statement then follows from Lemma~\ref{l:1+mO-acyclic-on-aff-perf}. The last part follows from Lemma~\ref{l:bOx-p-divisible}.  
\end{proof}
\begin{proof}[Proof of Theorem~\ref{t:v-Brauer}]
	We begin with the first part: For this we divide $\nu$ into 
	\[
	\nu:X_v\xrightarrow{w} X_{\proet}\xrightarrow{u} X_{\et}.\]
	We have already seen in Corollary~\ref{c:bOx-for-rigid-proet} that the statement holds for $u$, it thus suffices to prove that $Rw_{\ast}\bOx_v=\bOx_{\proet}$. As for rigid $X$ the site $X_{\proet}$ is locally perfectoid, it suffices to consider the case of affinoid perfectoid $X$. By \cite[Lemma 7.18]{etale-cohomology-of-diamonds}, there is then a pro-\'etale cover $X'\to X$ by a strictly totally disconnected space, so we can further reduce to this case. Here we crucially use that for perfectoid spaces, we are working with the pro-\'etale site introduced in \cite[Definition~8.1]{etale-cohomology-of-diamonds}.

	As $\bOx_v(Y)=\bOx_v(Y^\flat)$ for any perfectoid space $Y$ over $K$, we can reduce to characteristic $p$.
	
	By Lemma~\ref{l:bOx-in-arbitrary-char}, we have $w_{\ast}\bOx_v=\bOx_{\proet}$. It therefore suffices to prove that for $j\geq 1$,
	\[H^j_v(X,\bOx)=1.\]
	For this we can now argue as in \cite[Proposition 14.7]{etale-cohomology-of-diamonds}:
	Let $\alpha\in H^j_v(X,\bOx)$.
	 By locality of cohomology, there is an affinoid perfectoid $v$-cover $f:Y\to X$ that trivialises $\alpha$. Considering the \cH-to-sheaf sequence of $f$, we see that after refining $f$ we can assume that $\alpha$ is in the image of the map
	\[ \cH^q(Y\to X,\bOx)\to H^j_v(X,\bOx).\]
	Arguing as in \cite[Proposition~3.17]{heuer-diamantine-Picard}, we can write $Y\to X$ as the tilde-limit $Y\approx \varprojlim_{i\in I} Y_i$ of a cofiltered inverse system of open subspaces $Y_i\subseteq \B^{n}_X\to X$ of closed perfectoid balls relative over $X$.  Then also $Y_{/X}^{\times n}:=Y\times_X\dots\times_XY$ ($n$ copies) is affinoid perfectoid, and by Proposition~\ref{p:cohom-comparison}, we have $\bOx(Y_{/X}^{\times n})=\varinjlim_{i\in I} \bOx(Y_{i/X}^{\times n})$. It follows that
	\[\textstyle \cH^q(Y\to X,\bOx)=\varinjlim_{i\in I} \cH^q(Y_i\to X,\bOx),\]
	so $\alpha$ is already trivialised by one of the $Y_i\to X$. But by Lemma~\ref{l:local-splitting-over-stds} below, this map has a section, thus $H^j_v(X,\bOx)\to H^j_v(Y_i,\bOx)$ is injective and $\alpha=1$. This proves the first part.
	
	The second part follows from the first using that $R\nu_{\ast}\O^\times_{1v}=\O^\times_{1\et}$ by Lemma~\ref{l:1+mO-acyclic-on-aff-perf}.
	
	For the third part,	it follows from the exponential sequence that for $i\geq 1$ the map
	\[ \exp:R^i\nu_{\ast} \O\isomarrow  R^i\nu_{\ast} \O^\times\tf\]
	is an isomorphism. Here we again use Lemma~\ref{l:bOx-in-arbitrary-char} for injectivity in the case of $i=1$. Since $R\nu_{\ast}\mu_{p^\infty}=\mu_{p^\infty}$, the Kummer sequence shows that $R^i\nu_{\ast} \O^\times=R^i\nu_{\ast} \O^\times\tf$ for $i\geq 1$.  Finally, by \cite[Proposition~3.23]{Scholze2012Survey} and \cite[Proposition~2.25]{heuer-v_lb_rigid}, we have $R^i\nu_{\ast} \O=\Omega_X^i\{-i\}$.
\end{proof}
The following lemma was used in the proof:
\begin{Lemma}[{\cite[Lemma~9.5]{etale-cohomology-of-diamonds}}]\label{l:local-splitting-over-stds}
	Let $X$ be a strictly totally disconnected space and let $f:V\subseteq \B^n_X\to X$ be an open subspace of a closed perfectoid ball relatively over $X$ such that $f$ is surjective. Then $f$ admits a splitting $X\to V$.
\end{Lemma}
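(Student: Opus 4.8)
The plan is to induct on $n$ so as to reduce to the one-dimensional case, and then to build the section fibrewise over $\pi_0(X)$, using the algebraically closed residue fields to produce a point in each fibre and using that $X$ is strictly totally disconnected to spread out and glue these pointwise choices into a global section. The case $n=0$ is trivial, so the inductive step is the main structural device.

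For the induction, I would consider the projection $p:\B^n_X\to \B^{n-1}_X$ forgetting the last coordinate. Since the structure map of a relative closed ball is open, $p$ is open, so $p(V)\subseteq \B^{n-1}_X$ is again an open subspace, and it surjects onto $X$ because $V$ does. By the inductive hypothesis there is a section $\sigma:X\to p(V)$, and then $V':=V\times_{\B^{n-1}_X,\sigma}X$ is an open subspace of $\B^1_X$ which still surjects onto $X$ (its fibre over $x$ is the fibre of $V$ over $\sigma(x)$, which is nonempty as $\sigma(x)\in p(V)$). A section of $V'\to X$ combined with $\sigma$ then produces the desired section of $V\to X$, so it suffices to treat $n=1$.

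For $n=1$ I would first produce a section over each connected component. Each such component is of the form $\Spa(C,C^+)$ with $C$ algebraically closed, and the fibre $V_C\subseteq \B^1_{(C,C^+)}$ is a nonempty open. The key geometric input is that over an algebraically closed field every nonempty open subset of the closed ball contains a classical point, i.e.\ a $(C,C^+)$-point given by some $t\in C^+$: writing the open as a finite union of rational subsets $\{|f|\le |g|\neq 0\}$ and factoring $f,g\in C\langle T\rangle$ into linear factors, one solves the inequalities by an explicit $t\in C^+$. Such a $t$ defines a section $\Spa(C,C^+)\to V$ over the component.

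It then remains to spread out and glue. The section over a component is encoded by the single function $t\in \O^+$ of the component, and since $\O$ of a connected component of a strictly totally disconnected space is a filtered colimit $\varinjlim_{Z}\O(\pi^{-1}Z)$ over its clopen neighbourhoods $Z\subseteq \pi_0(X)$, the value $t$ lifts to a function on some clopen tube $\pi^{-1}(Z)$; the resulting map $\pi^{-1}(Z)\to \B^1_X$ sends the component into $V$, and since $V$ is open it lands in $V$ after shrinking $Z$. Quasi-compactness of $X$ now gives finitely many such tubes covering $X$, and refining them to a \emph{disjoint} clopen cover (possible as $\pi_0(X)$ is profinite) lets the local sections glue to a global section, since disjoint pieces impose no compatibility. \textbf{The main obstacle} is the interface between the two halves of the argument: guaranteeing that the pointwise section over a component actually extends over a clopen tube and that these choices can be made disjointly, which is exactly where strict total disconnectedness is indispensable; the density of classical points over the algebraically closed fibres is the essential geometric input feeding into it.
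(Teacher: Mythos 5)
Your reduction to $n=1$ and your spreading-out/gluing step over the strictly totally disconnected base are in the same spirit as the paper (the paper lifts the coordinates of the section along the surjection $\O^+(X)\to \O^+(Z)$ onto a component $Z$, intersects with $V$ to get an open neighbourhood of $Z$ over which $f$ splits, and then refines the resulting open cover using strict total disconnectedness). But the paper does \emph{not} reprove the connected case: it cites Scholze's Lemma~9.5 for $X=\Spa(C,C^+)$, and this is exactly where your argument has a genuine gap. A connected component of a strictly totally disconnected space is $\Spa(C,C^+)$ with $C^+$ an arbitrary open and bounded valuation subring, typically $C^+\neq \O_C$, so $\Spa(C,C^+)$ has higher-rank points specialising the classical point. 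A $(C,C^+)$-point of $V$ is a \emph{section} $\Spa(C,C^+)\to V$, i.e.\ some $t\in C^+$ such that the entire image of the associated map lands in $V$ --- in particular the images of the higher-rank valuations $v$ with ring between $C^+$ and $\O_C$, for which one needs $v(f(t))\le v(g(t))\neq 0$ for the defining rational inequalities, not just the rank-one inequalities $|f(t)|\le|g(t)|\neq 0$. An open subset containing the rank-one point attached to $t$ need not contain these specialisations, so ``$V_C$ is a nonempty open, hence contains a classical point, hence admits a section'' does not follow. Symptomatically, your argument for the connected case never uses the surjectivity of $V\to X$ (only nonemptiness of the fibre), whereas the statement genuinely requires it: one must in particular hit the closed point of $\Spa(C,C^+)$, and choose $t$ so that the whole chain of specialisations stays inside $V$. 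Supplying this is precisely the content of the cited Lemma~9.5, which is not a one-line density-of-classical-points argument.

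Two smaller points: your inductive step uses that the projection $\B^n_X\to\B^{n-1}_X$ is open so that $p(V)$ is again an open subspace; this is true but needs a justification (flatness/openness of relative balls, or a reduction to the unperfected ball in characteristic $p$ as in the paper), and the paper's route avoids it entirely by quoting the connected case for arbitrary $n$. Your gluing step is essentially sound, though the lifting of $t$ from a component to a clopen tube is cleaner via the surjectivity $\O^+(X)\to \O^+(Z)$ used in the paper (plus openness of $V$) than via a colimit description of $\O$ on the component, which would need a small completion argument.
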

\begin{proof}
	We can reduce to characteristic $p$, where the statement is equivalent when we replace $\B^n_X$ by the unperfected ball $X\times_K\Spa(K\langle X_1,\dots,X_n\rangle)$.
	The cited lemma therefore proves the desired statement in the case that $X=\Spa(C,C^+)$ is connected. It is already used in \cite{etale-cohomology-of-diamonds} that this implies the general statement, which can be seen as follows: Write $X=\Spa(R,R^+)$ and let $Z=\Spa(C,C^+)\hookrightarrow X$ be any connected component, then the map $R^+\to C^+$ is surjective: This can be seen by writing $Z$ as the intersection of all split open immersions $U\subseteq X$ containing $Z$, as for these $\O^+(X)\to \O^+(U)$ is surjective.
	
	We know that $f$ has a section over $Z$, corresponding to a map $R^+\langle X_1,\dots,X_n\rangle\to C^+$. Any choice of lifts of the images of $X_1,\dots,X_n$ to $R^+$ extends $Z\to \B_X^n$ to a section $X\to \B_{X}^n$. Let $V_Z\subseteq X$ be the intersection of $X$ and $V$ inside $\B_{X}^n$, then $f$ is split over $V_Z$. The $V_Z$ for all components $Z$ now form an open cover of $X$. Since $X$ is strictly totally disconnected, this cover has a splitting. Composing the splittings gives the desired map.
\end{proof}

\section{Picard groups of perfectoid covers}
As before, let $K$ be a perfectoid field over $\Z_p$.  Let $C$ be the completion of an algebraic closure of $K$. If $K$ is of characteristic $p$, we fix an untilt $K^\sharp$ over $\Q_p$. Then tilting induces an equivalence $\Perf_K=\Perf_{K^\sharp}$ and we denote by $\O^\sharp$ the structure sheaf on $\Perf_{K^\sharp}$.

Our first main result on Picard groups of perfectoid covers is the following general theorem. It may look slightly technical in its formulation, but all of our following more concrete results will be deduced from this:

\begin{Theorem}\label{t:Pic(wt B)-vs-Pic-special-fibre}
	Let $(\mathfrak X_i)_{i\in I}$ be a cofiltered inverse system of smooth qcqs formal $\O_K$-schemes with affine transition maps. Assume that the adic generic fibre $X_\infty$ of $\varprojlim_{i\in I} \mathfrak X_i$ is perfectoid. Let $X_i$ be the generic fibre and $\overbar{X}_i$ the special fibre of $\mathfrak X_i$.  Moreover:
	\begin{enumerate}[label=(\roman*)]
		\item If $\Char K=0$, assume that $H^j(X_\infty,\O)=0$ for $j=1,2$.
		\item If $\Char K=p$, assume  $H^j_{\proet}(X_{\infty},\U)=0$ for $j=1,2$. For example, this holds if $H^j(X_{\infty,C},\O^\sharp)=0=H^j(X_{\infty,C},\Q_p)$ and either $K=C$  or $\O^+(X_{\infty,C})\aeq \O_C$.
	\end{enumerate}
		Then the pullback map $\Pic(X_i)\to \Pic(X_\infty)$ induces a natural  isomorphism
	\[\textstyle \varinjlim_{i\in I} \Pic(\overbar{X}_i)[\tfrac{1}{p}]\isomarrow\Pic(X_\infty)[\tfrac{1}{p}].\]
	If (i) or (ii) only hold for $j=1$, it is still true that the kernel of $\varinjlim\Pic(X_i)\tf\to \Pic(X_\infty)\tf$ equals that of the reduction map $\varinjlim\Pic(X_i)\tf\to \varinjlim\Pic(\overbar X_i)\tf$.
\end{Theorem}
One setup in which the two conditions are satisfied is if the $\mathfrak X_i$ are all affine and $X_\infty$ is affinoid perfectoid, due to \Cref{l:1+mO-acyclic-on-aff-perf}. We then essentially recover Proposition~\ref{p:Gabber-Ramero-5.4.42}.

More interestingly, as a second example of this setup, we can answer our first new instances of Question~\ref{q:Pic(X)->Pic(X^perf)-kernel}: If $\Char K=p$, let us denote by $F$ the Frobenius morphism on $X$.

\begin{Corollary}\label{c:Pic(X^perf)-for-X-with-trivial-cohom}
	Let $ X$ be a geometrically connected smooth proper rigid space over a perfectoid field $K$ of characteristic $p$ with good reduction $\overbar{X}$. If $F$ acts nilpotently on $H^j(X,\O)$ for $j=1,2$, then
	\[\Pic( X^\perf)=\Pic(\overbar X)\tf.\]
	Equivalently, this happens if $H^j(X,\Z_p)=0$ for $j=1,2$. 
\end{Corollary}
\begin{Example}
	\begin{enumerate}
\item For $ X=\P^n$, this recovers Dorfsman-Hopkins' results \cite{dorfsman2019projective} that
\[\Pic(\P^{n,\perf})= \textstyle\varinjlim_F \Pic(\P^{n})=\Z\tf.\]
\item More generally, the Corollary applies to smooth proper toric varieties, here $\O$ is acyclic by Demazure vanishing \cite[Corollary~7.4]{DanilovToric}. So we recover \cite[Theorem~1.1]{perfectoid_covers_toric}.
\item To give a new example, let $X$ be a supersingular abelian variety $K$ with good reduction $\overbar{X}$. Then Frobenius acts nilpotently on $H^{j}(X,\O)$ for $j\geq 1$ and the corollary applies.
\item The same works for other kinds of ``supersingular'' varieties, e.g.\ for supersingular K3 surfaces with good reduction, etc.
\end{enumerate}
\end{Example}

\begin{proof}
	Let $\mathfrak X$ be the smooth formal model and set  $\mathfrak X_i:=\mathfrak X$ with transition maps $\mathfrak X_{i+1}\to \mathfrak X_i$ the absolute Frobenius. We claim that the conditions of Theorem~\ref{t:Pic(wt B)-vs-Pic-special-fibre} are satisfied.

	By \cite[Corollary~3.8]{PCT-char-p}, it follows from the supersingularity assumption that for $j=1,2$,
	\[H^j(X^\perf_C,\O^\sharp)=0 \quad \text{and} \quad H^j(X^\perf_C,\Q_p)=0.\]
	We also have $\O^+(X^\perf_C)=\O_C$.
	Thus condition (ii) is satisfied, and the theorem applies.
\end{proof}

\begin{Remark}
	Regarding Question~\ref{q:Pic(X)->Pic(X_infty)-surj},
	the last part of  Theorem~\ref{t:Pic(wt B)-vs-Pic-special-fibre} means that line bundles which are ``$p$-adically close'' on $X$ (in the sense that they agree mod $\mathfrak m$) become isomorphic on $X_\infty$. 
	If $K$ is an algebraically closed field of characteristic $0$, this is closely related to  \cite[Theorem~3.6.1]{heuer-geometric-Simpson-Pic}, which says that a similar phenomenon occurs for the universal pro-finite-\'etale cover $\wt X\to X$.
	In this regard, Theorem~\ref{t:Pic(wt B)-vs-Pic-special-fibre} gives an explanation from a different perspective that is more restrictive in its good reduction assumption on $X$, but more elementary (e.g.\ it doesn't use diamonds in any essential way), and applies in greater generality, including cases of characteristic $p$ and over non-algebraically closed base fields. 
	
	More importantly, Theorem~\ref{t:Pic(wt B)-vs-Pic-special-fibre} describes $\Pic(X_\infty)$ itself, and thus can be used to answer the second part of Question~\ref{q:Pic(X)->Pic(X_infty)-surj}, which \cite[Theorem~3.6]{heuer-geometric-Simpson-Pic} does not tell us anything about.
\end{Remark}
\subsection{$\bOx$ in the case of good reduction}
We begin the proof of Theorem~\ref{t:Pic(wt B)-vs-Pic-special-fibre} with the following series of lemmas. For these we can again work over any non-archimedean field $K$ of residue characteristic $p$, without assuming that this is perfectoid. 
We start by recalling two useful lemmas from the literature:

\begin{Lemma}[{\cite[Proposition~3.4.1]{Lutkebohmert_RigidCurves}}]\label{l:reduced-fibre-implies-integrally-closed-in-generic-fibre}
	Let $\mathfrak X$ be a reduced formal scheme of topologically finite presentation over $\O_K$ with reduced special fibre. Then for the generic fibre $\eta:X\to \mathfrak X$,
	\[\eta_{\ast}\O^+_{ X}=\O_{\mathfrak X}.\]
\end{Lemma}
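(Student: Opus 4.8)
The statement is local on $\mathfrak{X}$, so first I would reduce to the affine case $\mathfrak{X}=\Spf A$, where $A$ is a flat, topologically finitely presented $\O_K$-algebra which is reduced and whose special fibre $\overbar{A}:=A/\m A$ is reduced. Writing $\mathcal{A}:=A[\tfrac{1}{\varpi}]$ for the affinoid generic fibre and $X=\Spf(A)^{\mathrm{rig}}$, one computes $\eta_{\ast}\O^+_X(\mathfrak{X})=\O^+(X)=\{f\in\mathcal{A}:|f|_{\sup}\le 1\}=\mathcal{A}^{\circ}$, the ring of power-bounded elements. Every element of $A$ is power-bounded and $A\hookrightarrow\mathcal{A}$ (flatness), so there is a canonical injection $A\hookrightarrow\mathcal{A}^{\circ}$, and the natural map $\O_{\mathfrak{X}}\to\eta_{\ast}\O^+_X$ is exactly this inclusion locally. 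Thus the entire content of the lemma is the reverse inclusion $\mathcal{A}^{\circ}\subseteq A$.

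The conceptual heart is the interplay between the supremum norm and the reduction, and this is where reducedness of $\overbar{A}$ enters. Let $f\in\mathcal{A}^{\circ}$ and choose $n\ge 0$ minimal with $a:=\varpi^{n}f\in A$; I want to show $n=0$. Suppose $n\ge 1$ and set $g:=\varpi^{n-1}f\in\mathcal{A}^{\circ}$, so $\varpi g=a\in A$ while $g\notin A$ by minimality. Since $g$ is power-bounded, $|a|_{\sup}=|\varpi|\cdot|g|_{\sup}\le|\varpi|<1$, i.e.\ $|a(x)|<1$ for every point $x\in X$. The specialisation map $\mathrm{sp}\colon X\to\overbar{X}=\Spec\overbar{A}$ is surjective onto closed points, and for $\overbar{x}=\mathrm{sp}(x)$ the value $\overbar{a}(\overbar{x})$ is the reduction of $a(x)$; as $|a(x)|<1$ forces $a(x)\in\m$, this reduction vanishes. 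Hence $\overbar{a}$ vanishes at every closed point of $\Spec\overbar{A}$. Because $\overbar{A}=A\otimes_{\O_K}k$ is of finite type over the field $k$ (the reduction of a restricted power series is a polynomial), it is Jacobson, so $\overbar{a}$ lies in the nilradical, which is zero by reducedness. Therefore $\overbar{a}=0$, i.e.\ $a\in\m A$. When $\O_K$ is discretely valued with $\m=(\varpi)$, this closes the argument at once: $a\in\varpi A$ gives $a=\varpi a'$ with $a'\in A$, whence $g=a'\in A$ by $\varpi$-torsion-freeness of $\mathcal{A}$, contradicting $g\notin A$; so $n=0$ and $f\in A$.

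The step $a\in\m A\Rightarrow g=a/\varpi\in A$ is clean only in the discretely valued case, and I expect the passage to a general rank-one valued $K$ to be the main obstacle: here $\m$ need not be generated by $\varpi$, and the ideal $\mathcal{A}^{\circ\circ}=\{|f|_{\sup}<1\}$ of topologically nilpotent elements can be strictly larger than $\m\mathcal{A}^{\circ}$, so the naive division by $\varpi$ and the comparison with the canonical reduction both break down. To handle this I would instead invoke that $\mathcal{A}^{\circ}$ is module-finite over $A$ (finiteness of the integral closure for reduced affinoids), so that $M:=\mathcal{A}^{\circ}/A$ is a finitely generated $\varpi$-power-torsion $A$-module; the sup-norm computation above, applied along the full $\m$-adic rather than $\varpi$-adic filtration, shows $M=\m M$, and $\varpi$-adic completeness together with finiteness then forces $M=0$ by a Nakayama-type argument. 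This finiteness-and-filtration bookkeeping over non-discretely-valued bases, rather than the reducedness input itself, is the delicate part of the proof.
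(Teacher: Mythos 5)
The paper itself gives no argument for this lemma---it is stated as a citation to L\"utkebohmert's book---so your proposal can only be judged on correctness, not against an internal proof. Over a \emph{discretely valued} base your argument is complete and correct, and it is the standard mechanism: with $\mathfrak X=\Spf A$ and $\mathcal A=A[\tfrac{1}{\varpi}]$, the point that $|a|_{\sup}<1$ forces the image of $a$ in $\overbar{A}=A/\m A$ to vanish at all closed points (surjectivity of specialisation), hence to vanish identically because $\overbar{A}$ is Jacobson and reduced, is exactly the right use of the reduced-fibre hypothesis, and when $\m=(\varpi)$ the division $a=\varpi a'$ closes the induction.

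The gap is the general case, which is the only case the paper needs: the lemma is applied over perfectoid fields, in particular in characteristic $p$, and these are never discretely valued. Your repair rests on the black box that $\mathcal A^\circ$ is module-finite over $A$, i.e.\ ``finiteness of the integral closure for reduced affinoids''. That is the Grauert--Remmert/Gruson finiteness theorem, which is a theorem only over \emph{stable} fields (discretely valued, algebraically closed, \dots); it is false over general non-archimedean fields, and perfectoid fields are typically not stable---the paper says exactly this in the Remark after Lemma~\ref{l:BL8-11}. Concretely, let $L|K$ be a finite extension with nontrivial defect, say of degree $p$ with $e=f=1$ (such extensions exist over the relevant fields, e.g.\ completed perfections in characteristic $p$). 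Then $\mathcal A=L$ is a reduced affinoid; every topologically finitely generated model $A\subseteq \O_L$ is a finite free $\O_K$-module (the topological generators are integral over $\O_K$, so they generate a finite, hence complete, subalgebra); and $\mathcal A^\circ=\O_L$ is \emph{not} finite over $A$: otherwise $\O_L$ would be finite torsion-free, hence free of rank $[L:K]=p$ over $\O_K$, which is impossible because $e=1$ gives $\m\O_L=\m_L$, so $\O_L/\m\O_L=k_L=k$ is $1$-dimensional rather than $p$-dimensional. This example has non-reduced special fibre, so it does not contradict the lemma, but it shows the finiteness you want to quote is not an available input---under the reduced-fibre hypothesis it is \emph{equivalent} to the conclusion, so invoking it is circular. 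Two further soft spots: $M=\m M$ does not actually follow from your computation (it yields $\varpi^n f\in\m A$, and rewriting this as $f\in A+\m\mathcal A^\circ$ requires precisely the division by $\varpi^n$ that is unavailable), and finiteness is genuinely indispensable for the Nakayama step, since $\m^2=\m$ and, for instance, $M=K/\O_K$ is $\varpi$-power-torsion with $M=\m M\neq 0$. A repair consistent with how the paper handles the same issue for Lemma~\ref{l:BL8-11}: prove the statement over the completed algebraic closure $C$ (a stable field, where Grauert--Remmert, or your argument, goes through) and descend along the faithfully flat map $\O_K\to\O_C$; this requires the special fibre to remain reduced after base change, i.e.\ geometric reducedness, which is automatic in the smooth situation in which the paper uses the lemma.
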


\begin{Lemma}[{\cite[Lemma~6.2.4]{Lutkebohmert_RigidCurves}}]\label{l:BL8-11}
	Let $\mathfrak X$ be an affine smooth formal scheme over $\O_K$ with rigid generic fibre $X$ and special fibre $\overbar{X}$. Then the following natural maps are isomorphisms:
	\[\Pic_{\an}(X)\isomfrom\Pic_{\Zar}(\mathfrak X)\isomarrow\Pic_{\Zar}(\overbar{X}).\]
\end{Lemma}
\begin{proof}
	That the second map is an isomorphism follows from the same lifting argument as in  Lemma~\ref{l:bOx-in-arbitrary-char}, using that $\O_{\mathfrak X}/p$ is acyclic on $\mathfrak X$.

	 The cited lemma proves that the first map is surjective.
	We note that  \cite[\S6]{Lutkebohmert_RigidCurves} has a running assumption that $K$ is algebraically closed, but this is not used for the proof of the relevant Lemmas~6.2.3-4. 
	
	It follows that any line bundle on $X$ is trivial locally on $\mathfrak X$. Let $\lambda:X_\an\to \mathfrak X_\Zar$ be the natural morphism of sites, then this implies that $\Pic_{\an}(X)=H^1_{\Zar}(\mathfrak X,\lambda_\ast \O_X^\times)$. To see that the map $\Pic_{\Zar}(\mathfrak X)\to \Pic_\an(X)$ is injective, it thus remains to prove that  the following map is injective:
	\[  H^1_{\Zar}(\mathfrak X, \O_{\mathfrak X}^\times)\to H^1_{\an}(\mathfrak X,\lambda_\ast \O_X^\times).\]
	This follows from the long exact sequence of $H^0(\mathfrak X,-)$ applied to \Cref{l:barO-computes-line-bundles-on-special-fibre}.1 below.
\end{proof}
\begin{Remark}
	Many variants of this lemma with different proofs can be found in the literature for different assumptions on $K$: The earliest is by Gerritzen \cite[\S4]{Gerritzen-ZerlegungPicard} for discrete $K$, one can then even relax conditions on $\mathfrak X$ as shown by Hartl--L\"utkebohmert \cite[Lemma 2.1]{HartlLutk}. For algebraically closed $K$, it is shown by Bosch--L\"utkebohmert \cite[Lemma~8.11]{BL-stable-reduction-II}. Heinrich--van der Put consider stable $K$ \cite[\S1]{H-vdP_Picard}.
	The case of perfectoid $K$, which are rarely stable, can also be deduced from the algebraically closed case by Galois descent.
\end{Remark}

For the rest of this subsection, let $\mathfrak X$ be any smooth formal scheme with generic fibre $X$ and special fibre $\overbar{X}$. We consider the morphism of sites
\[\lambda:X_{\an}\to {\mathfrak X}_{\Zar}=\overbar{X}_{\Zar}.\]
We remind the reader that we denote by $\Gamma$ the value group of $K$.
\begin{Lemma}\label{l:barO-computes-line-bundles-on-special-fibre} 
	\begin{enumerate}
		\item The supremum norm defines a homomorphism $\lambda_{\ast}\O^\times_{X}\xrightarrow{\|- \|_{\sup}}\underline \Gamma$ of sheaves of abelian groups on $\mathfrak X_{\Zar}$ that fits into an exact sequence of sheaves
		\[ 1\to \O_{\mathfrak X}^{\times}\to \lambda_{\ast}\O^\times_{X}\xrightarrow{\|- \|_{\sup}}\underline{\Gamma}\to 0.\]
		This stays exact after applying $H^0(U,-)$ for any affine open subscheme $U\subseteq \mathfrak X$.
		\item\label{i:ses-of-lambda_astbOx} It induces a natural exact sequence on $\mathfrak X_{\Zar}$ 
		\[ 	\begin{tikzcd}
			1 \arrow[r] & \O_{\overbar{X}}^\times\tf \arrow[r] & \lambda_{\ast}\bOx \arrow[r,"\|-\|_{\sup}"] & \underline{\Gamma}\tf \arrow[r] & 0.
		\end{tikzcd}\]
	\end{enumerate}
\end{Lemma}
\begin{proof}	
	For the first part, consider any affine open $U=\Spf(A)\subseteq \mathfrak X$. Then by Lemma~\ref{l:reduced-fibre-implies-integrally-closed-in-generic-fibre},
	\[\lambda_{\ast}\O_X^{+}(U)=A,\quad \lambda_{\ast}\O_X(U)=A[\tfrac{1}{\varpi}].\]
	Since $A$ is smooth, we may replace $U$ by a  connected component to assume that the special fibre $A/\m$ is integral, in particular reduced. It follows by \cite[\S6.4.3]{BGR}\cite[Proposition~3.4.1]{Lutkebohmert_RigidCurves} that there is a surjection
	$\alpha:\O_K\langle X_1,\dots,X_n\rangle\to A$
	such that $\|-\|_{\sup}=|-|_{\alpha}$ is the residue norm of $\alpha$. In particular,
	\[\{f\in A[\tfrac{1}{\varpi}]\mid \|f\|<1 \}=\m A,\]
	as well as $\|A\|=|A|_{\alpha}=|K|=\Gamma$.
	
	Let now $f,g\in A[\tfrac{1}{\varpi}]$. To prove $\|fg\|=\|f\|\|g\|$, we may without loss of generality assume that $f,g \neq 0$. After rescaling, we may assume that we have $\|f\|=1=\|g\|$, thus $f,g\in A\backslash \m A$. We need to see that $\|fg\|\geq \|f\|\|g\|=1$, since $\leq$ is clear. 
	
	Suppose that $\|fg\|<1$, then by the above $fg \in \m A$ and thus the image of $fg$ in $A/\m$ vanishes. Since $A/\m$ is integral, this implies $f\in \m A$ or $g \in \m A$, a contradiction.
	
	As a consequence, we see that we have a short exact sequence of abelian groups
	\[1\to A^{\times}\to A[\tfrac{1}{\varpi}]^{\times} \xrightarrow{\|-\|} \Gamma\to 1,\]
	which implies $A[\tfrac{1}{\varpi}]^{\times}=K^\times\cdot A^{\times}$.
	In particular, while this is clearly not true for any $f\in A[\tfrac{1}{\varpi}]$, if $f\in A[\tfrac{1}{\varpi}]^\times$ then the value of  $\|f\|_{\sup}$ is stable under any localisation on $U$. This shows that $\|-\|_{\sup}$ glues to a multiplicative map
	$\lambda_{\ast}\O^\times_{X}\to\underline{\Gamma}$ as desired.
	
	To deduce part 2, consider the commutative diagram of abelian sheaves on $\overbar{X}_{\Zar}$
	\begin{center}
		\begin{tikzcd}
			0 \arrow[r] & {\O^{\times}_{\mathfrak X}/(1+\m \O_{\mathfrak X}}) \arrow[r] \arrow[d] & \lambda_{\ast}\O_X^\times/(1+\m \O_{\mathfrak X}) \arrow[d] \arrow[r,"\|-\|_{\sup}"] & \underline{\Gamma} \arrow[d] \arrow[r] & 0 \\
			0 \arrow[r] & \O_{\overbar{X}}^\times \arrow[r] & \lambda_{\ast}\bOx \arrow[r,"\|-\|_{\sup}",dotted] & \underline{\Gamma}\tf \arrow[r] & 0.
		\end{tikzcd}
	\end{center}
	The first column is an isomorphism. The second column becomes an isomorphism after inverting $p$ by Lemmas~\ref{l:bOx-in-arbitrary-char} and \ref{l:reduced-fibre-implies-integrally-closed-in-generic-fibre}.  The top row is exact by the first part. Hence the bottom row is exact after inverting $p$.
\end{proof}

\begin{Lemma}\label{l:R1lambda_bOx}
	We have $R^1\lambda_{\ast}\bOx=0$.
\end{Lemma}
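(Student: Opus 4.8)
The plan is to compute $R^1\lambda_{\ast}\bOx$ through its standard description as a sheafification. Recall that $R^1\lambda_{\ast}\bOx$ is the sheaf on $\overbar{X}_{\Zar}=\mathfrak X_{\Zar}$ associated to the presheaf
\[
P:\; U\longmapsto H^1_{\an}(X_U,\bOx),
\]
where $U=\Spf(A)$ ranges over the affine opens of $\mathfrak X$, $X_U=\Spa(A\tf,A)$ is its affinoid generic fibre (using $\eta_{\ast}\O^+_X=\O_{\mathfrak X}$, Lemma~\ref{l:reduced-fibre-implies-integrally-closed-in-generic-fibre}, so that $A^+=A$), and $\overbar{U}=\Spec(A/\mathfrak m)$ is its special fibre. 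Since such $U$ form a basis of $\mathfrak X_{\Zar}$, it suffices to work with them. Thus proving $R^1\lambda_{\ast}\bOx=0$ amounts to showing that every class in $P(U)$ restricts to zero along some Zariski-open cover of $U$.

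First I would identify $P$ explicitly. As $X_U$ is affinoid, Lemma~\ref{l:bOx-in-arbitrary-char}.3 gives a natural isomorphism $H^1_{\an}(X_U,\bOx)\cong \Pic_{\an}(X_U)\tf$, and as $\mathfrak U=U$ is affinoid smooth, Lemma~\ref{l:BL8-11} gives a natural isomorphism $\Pic_{\an}(X_U)\cong \Pic_{\Zar}(\overbar{U})$. Both are induced by pullback and hence commute with restriction to affine opens $U'\subseteq U$, so at the level of presheaves on $\overbar{X}_{\Zar}$ we obtain an identification
\[
P \;\cong\; \big(U\mapsto \Pic_{\Zar}(\overbar{U})\tf\big)\;=\;\big(U\mapsto H^1_{\Zar}(\overbar{U},\O^\times_{\overbar{X}})\big)\tf.
\]

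The last step is then immediate. A class in $\Pic_{\Zar}(\overbar{U})\tf$ is represented by a line bundle $L$ on the scheme $\overbar{U}$, which by definition is Zariski-locally trivial; choosing a Zariski-open cover $\{U_i\}$ of $\overbar{U}$ (equivalently of $U$) with each $L|_{\overbar{U}_i}$ trivial, and noting that inverting $p$ preserves triviality, the class restricts to zero on every $U_i$. Hence $P$ has vanishing sheafification, i.e.\ $R^1\lambda_{\ast}\bOx=0$.

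The only point requiring genuine care, and thus the main (modest) obstacle, is the compatibility of the two isomorphisms with Zariski restriction, so that the identification of $P$ holds as presheaves rather than merely sectionwise; this is exactly where it matters that both Lemma~\ref{l:bOx-in-arbitrary-char}.3 and Lemma~\ref{l:BL8-11} are realised by pullback maps. It is worth flagging that analytic-local triviality of line bundles on the rigid space $X_U$ does \emph{not} by itself suffice, since admissible opens of $X_U$ need not arise from formal (Zariski) opens of $\mathfrak X$; it is precisely Lemma~\ref{l:BL8-11} that transports the problem to the special fibre, where triviality is genuinely Zariski-local.
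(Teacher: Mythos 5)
Your proof is correct and follows essentially the same route as the paper: identify the presheaf $U\mapsto H^1_{\an}(U,\bOx)$ with $\Pic_{\an}(U)\tf$ via Lemma~\ref{l:bOx-in-arbitrary-char}.3, then with $\Pic_{\Zar}(\overbar U)\tf$ via Lemma~\ref{l:BL8-11}, and observe that Zariski-local triviality of line bundles on the special fibre makes this sheafify to zero. Your extra remarks on compatibility with restriction and on why analytic-local triviality alone would not suffice are sensible elaborations of points the paper leaves implicit.
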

\begin{proof}
	Let $\mathfrak U\subseteq \mathfrak X$ be an affine open with reduction $\overbar U\subseteq \overbar{X}$ and generic fibre $U$.
	Then by using first
	Lemma~\ref{l:bOx-in-arbitrary-char}.3 and then Lemma~\ref{l:BL8-11} we see that $R^1\lambda_{\ast}\bOx$ is the sheafification of \[\overbar U\mapsto  H^1_{\an}(U,\bOx)=\Pic(U)\tf=\Pic(\overbar U)\tf,\]
	which clearly sheafifies to zero.
\end{proof}
We can now prove that $\bOx$ on $X_{\et}$ computes line bundles on the special fibre $\overbar{X}$:
\begin{Proposition}\label{p:bOx-computes-line-bundles-on-special-fibre}
	Let $\mathfrak X$ be a smooth formal scheme with generic fibre $X$ and special fibre $\overbar{X}$ with morphism of sites $\lambda:X_{\an}\to \overbar{X}_{\Zar}$.
	The map $\O^\times_{\overbar{X}}\to \lambda_{\ast}\bOx$ induces an isomorphism
	\[\Pic(\overbar{X})[\tfrac{1}{p}]\to  H^1_{\et}(X,\bOx).\]
\end{Proposition}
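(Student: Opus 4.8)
The plan is to compute $H^1_{\et}(X,\bOx)$ by pushing the sheaf all the way down to the Zariski site of the special fibre and then reading off the answer from the exact sequence of Lemma~\ref{l:barO-computes-line-bundles-on-special-fibre}.2. Write $\mu\colon X_{\et}\xrightarrow{r} X_{\an}\xrightarrow{\lambda}\overbar{X}_{\Zar}$ for the composite. The first step is to check that $R^1\mu_{\ast}\bOx_{\et}=0$ and $\mu_{\ast}\bOx_{\et}=\lambda_{\ast}\bOx$. By Lemma~\ref{l:bOx-in-arbitrary-char}.3 we have $r_{\ast}\bOx_{\et}=\bOx_{\an}$, and since that lemma also gives $H^1_{\et}(U,\bOx)=\Pic_{\an}(U)\tf$ on affinoids $U$, which sheafifies to $0$ in the analytic topology, we get $R^1r_{\ast}\bOx_{\et}=0$. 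Combined with $R^1\lambda_{\ast}\bOx=0$ from Lemma~\ref{l:R1lambda_bOx} and the Grothendieck spectral sequence for $\mu=\lambda\circ r$, this yields the two claims. The Leray sequence for $\mu$ then collapses in low degrees to an isomorphism
\[ H^1_{\et}(X,\bOx)\isomarrow H^1_{\Zar}(\overbar{X},\lambda_{\ast}\bOx). \]

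It then remains to identify the right-hand side. Here I would feed the short exact sequence
\[ 1\to \O_{\overbar{X}}^\times\tf\to \lambda_{\ast}\bOx\xrightarrow{\|-\|_{\sup}}\underline{\Gamma}\tf\to 0 \]
of Lemma~\ref{l:barO-computes-line-bundles-on-special-fibre}.2 into its long exact cohomology sequence on $\overbar{X}_{\Zar}$. Two observations make the outer terms collapse. First, the sup-norm map is already surjective on global sections after inverting $p$: the constant units $K^\times\subseteq \O^\times(X)$ realise all of $\Gamma$ under $\|-\|_{\sup}$, and they may be chosen independently on each connected component, so $H^0(\overbar{X},\lambda_{\ast}\bOx)\to H^0(\overbar{X},\underline{\Gamma}\tf)$ is onto and the connecting map into $H^1(\overbar{X},\O_{\overbar{X}}^\times\tf)$ vanishes. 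Second, $H^1_{\Zar}(\overbar{X},\underline{\Gamma}\tf)=0$. Combining these, the long exact sequence gives $H^1_{\Zar}(\overbar{X},\O_{\overbar{X}}^\times\tf)\isomarrow H^1_{\Zar}(\overbar{X},\lambda_{\ast}\bOx)$, and since $\overbar{X}$ is qcqs (being of finite type over $k$ in the cases of interest) the filtered colimit defining $(-)\tf$ commutes with cohomology, so the left-hand side is $\Pic(\overbar{X})\tf$. Tracing through the maps shows the resulting isomorphism $\Pic(\overbar{X})\tf\isomarrow H^1_{\et}(X,\bOx)$ is the one induced by $\O^\times_{\overbar{X}}\to\lambda_{\ast}\bOx$, as claimed.

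For the vanishing $H^1_{\Zar}(\overbar{X},\underline{\Gamma}\tf)=0$ I would use that $\overbar{X}$ is smooth, hence regular and in particular normal, so its connected components are irreducible; a constant sheaf (and thus the filtered colimit $\underline{\Gamma}\tf$ of constant sheaves) is flasque, hence acyclic, on an irreducible space, and one concludes componentwise. I expect the main obstacle to be the careful bookkeeping around the constant sheaf $\underline{\Gamma}\tf$, namely simultaneously establishing that its $H^1$ vanishes and that the sup-norm is componentwise surjective on $H^0$ so that the connecting homomorphism dies; by contrast the spectral-sequence formalism reducing the \'etale computation to the Zariski one is routine given Lemmas~\ref{l:bOx-in-arbitrary-char}, \ref{l:R1lambda_bOx} and \ref{l:barO-computes-line-bundles-on-special-fibre}.
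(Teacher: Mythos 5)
Your proof is correct and follows essentially the same route as the paper's: reduce $H^1_{\et}(X,\bOx)$ to $H^1_{\Zar}(\overbar{X},\lambda_{\ast}\bOx)$ via the vanishing of higher direct images (Lemmas~\ref{l:bOx-in-arbitrary-char}.3 and \ref{l:R1lambda_bOx}), then apply the long exact sequence of Lemma~\ref{l:barO-computes-line-bundles-on-special-fibre}.2, killing the outer terms by surjectivity of the sup-norm on $H^0$ (constants plus $p$-divisibility) and flasqueness of constant sheaves on irreducible spaces. The only cosmetic differences are that the paper reduces to connected $\overbar{X}$ at the outset rather than arguing componentwise, and leaves implicit the spectral-sequence bookkeeping for $r\colon X_{\et}\to X_{\an}$ that you spell out.
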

\begin{proof}
	We can assume that $\overbar{X}$ is connected.
	By Lemma~\ref{l:bOx-in-arbitrary-char}.3, we have $H^1_{\an}(X,\bOx)=H^1_{\et}(X,\bOx)$, so we can work in the analytic topology.
	By Lemma~\ref{l:R1lambda_bOx}, we have 
	\[H^1_{\Zar}(\overbar X,\lambda_{\ast}\bOx)= H^1_{\an}({X},\bOx).\]
	 The exact sequence of Lemma~\ref{l:barO-computes-line-bundles-on-special-fibre}.\ref{i:ses-of-lambda_astbOx} now induces an exact sequence
	\[\bOx(X)\to \Gamma\tf\to  H^1_{\Zar}(\overbar{X},\O_{\overbar{X}}^\times)[\tfrac{1}{p}]\to H^1_{\Zar}(\overbar{X},\lambda_{\ast}\bOx)\to H^1_{\Zar}(\overbar{X},\underline{\Gamma})\to 0.\]
	The first map is surjective since $K^\times/(1+\m)\to \Gamma$ is surjective and $\bOx$ is $p$-divisible.
	The last term vanishes since $\overbar{X}$ is smooth and connected, hence irreducible: This implies that any constant sheaf is Zariski-flabby (\cite[``Un example amusant'', before \S3.4]{Grothendieck_Tohoku}, see also \cite[{Tag 02UW}]{StacksProject}). Thus the second arrow is an isomorphism.
\end{proof}
\subsection{Proof of Main Theorem}
\begin{proof}[Proof of Theorem~\ref{t:Pic(wt B)-vs-Pic-special-fibre}]
	In the case of (i), we have by Proposition~\ref{p:bOx} an exact sequence
	\[H^1_{\et}(X_\infty,\O)\xrightarrow{\exp} H^1_{\et}(X_\infty,\O^{\times}[\tfrac{1}{p}])\to H^1_{\et}(X_\infty,\overbar\O^\times)\to H^2_{\et}(X_\infty,\O).\]
	Since the outer terms vanish by assumption, the middle map is an isomorphism. The second term is $\Pic(X_\infty)\tf$ by quasi-compactness of $X_\infty$. By Proposition~\ref{p:cohom-comparison}, we thus have
	\[\textstyle \Pic(X_\infty)\tf=  H^1_{\et}(X_\infty,\overbar\O^\times)= \varinjlim_{i\in I}H^1_{\et}(X_i,\overbar\O^\times).\]
	By Proposition~\ref{p:bOx-computes-line-bundles-on-special-fibre},
	\[ H^1_{\et}(X_i,\overbar\O^\times)=\Pic(\overbar{X}_i)[\tfrac{1}{p}].\]
	In the colimit over $i\in I$, this gives the desired isomorphism.
	
	The weaker statement when (i) only holds for $j=1$ follows from the same identifications.
	
	For condition (ii), i.e.\ in characteristic $p$, we instead use the sequence
	\[H^1_{\proet}(X_\infty,\O^\times_1)\rightarrow H^1_{\proet}(X_\infty,\O^{\times}[\tfrac{1}{p}])\to H^1_{\proet}(X_\infty,\overbar\O^\times)\to H^2_{\proet}(X_\infty,\O^\times_1)\]
	and then argue as for (i). To see that the second set of conditions in the theorem implies that the outer terms vanish, we first note that the pro-\'etale logarithm sequence
	\[ 0\to \Q_p\to \U\to \O^\sharp\to 0\]
	shows that the assumptions imply for $j=1,2$ that
	$H^j_{\proet}(X_{\infty,C},\U)=0$.
	Moreover, the last assumption implies $H^0(X_{\infty,C},\U)=1+\m\O_C$.
	Let $G=\Gal(C|K)$, then by the Cartan--Leray spectral sequence of $X_{\infty,C}\to X_{\infty,K}$ this implies that for $j=1,2$,
	\[ H^j_{\cts}(G,1+\m\O_C)=H^j(X_{\infty},\U).\]
	But the left-hand-side also computes $H^j_{\proet}(\Spa(K),\U)$, which vanishes for $j\geq 1$ by \Cref{l:1+mO-acyclic-on-aff-perf}.
\end{proof}

\section{The case of abeloid varieties}
As our main application of Theorem~\ref{t:Pic(wt B)-vs-Pic-special-fibre}, we now compute an interesting example in which the morphism from Question~\ref{q:Pic(X)->Pic(X^perf)-kernel} is far from being an isomorphism, namely for universal covers of abeloid varieties. This is of particular relevance to Question~\ref{q:Pic(X)->Pic(X^perf)-kernel} as via the theory of Albanese varieties, it has implications for  proper varieties in general. Independently, we apply our description in \cite{heuer-isoclasses} to obtain a pro-\'etale uniformisation result for abeloid varieties.

Our setup throughout this section is as follows: Let $K$ be any perfectoid field over $\Z_p$, and let $A$ be an abeloid variety over $K$. An abeloid is by definition a connected smooth proper rigid group, we refer to \cite{Lutkebohmert_abeloids} for background.  We consider the ``$p$-adic universal cover''
\[ \wt A=\varprojlim_{[p]} A,\]
studied in \cite{perfectoid-covers-Arizona}.
If $A$ has good reduction, it is easy to see that this is a perfectoid space \cite[Lemme~A16]{pilloni2016cohomologie}. The first result of this section is the following description of its Picard group:
\begin{Theorem}\label{t:Pic(wt B)}
	Let $B$ be an abeloid variety  over $K$  with good reduction $\overbar{B}$ over $k$. Then 
	\[ \Pic(\wt B)=\Pic(\overbar{B})[\tfrac{1}{p}]\]
\end{Theorem}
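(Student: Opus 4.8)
The plan is to apply Theorem~\ref{t:Pic(wt B)-vs-Pic-special-fibre} to the constant inverse system $\mathfrak X_i=\mathfrak B$ indexed by $i\in\N$, where $\mathfrak B$ is the formal abelian scheme model of $B$ over $\O_K$ provided by good reduction, and where each transition map $\mathfrak X_{i+1}\to\mathfrak X_i$ is multiplication by $p$. Since $[p]$ is a finite flat isogeny, the transition maps are affine and surjective, the $\mathfrak X_i$ are smooth and quasi-compact, and the generic fibre of $\varprojlim_{[p]}\mathfrak B$ is exactly $\wt B$, which is perfectoid by good reduction. All special fibres $\overbar X_i$ equal $\overbar B$, and the transition maps induced on Picard groups are the pullbacks $[p]^*\colon\Pic(\overbar B)\to\Pic(\overbar B)$.

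First I would verify the cohomological hypothesis (i) or (ii). The key input is that $[p]^*$ acts on $H^j(\mathfrak B,\O_{\mathfrak B})$ as multiplication by $p^j$ for $j\geq 1$: this is the classical computation on a formal abelian scheme, where $[p]^*$ is multiplication by $p$ on $H^1=\mathrm{Lie}(\mathfrak B^\vee)$ and hence by $p^j$ on $H^j=\wedge^j H^1$; as $H^j(\mathfrak B,\O_{\mathfrak B})$ is $\O_K$-free in characteristic $0$, this identity even holds integrally. Passing to the cover, $R\Gamma(\wt B,\O^+)$ is (almost) the derived $p$-completion of $\varinjlim_{[p]^*}R\Gamma(\mathfrak B,\O_{\mathfrak B})$; in degrees $j\geq 1$ the transition maps $\times p^j$ turn this colimit into a finite $K$-vector space, whose derived $p$-completion vanishes. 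Hence $H^j(\wt B,\O)=0$ for $j\geq 1$, which gives (i) when $\Char K=0$. When $\Char K=p$ I would instead check the stated sufficient condition, using the analogous eigenvalue argument to force $H^j(\wt B_C,\O^\sharp)=0=H^j(\wt B_C,\Q_p)$ for $j=1,2$ together with $\O^+(\wt B_C)\aeq\O_C$.

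With the hypotheses in place, Theorem~\ref{t:Pic(wt B)-vs-Pic-special-fibre} yields $\Pic(\wt B)\tf\cong\varinjlim_{[p]^*}\Pic(\overbar B)\tf$. To evaluate the colimit I would invoke the theorem of the cube on the abelian variety $\overbar B$: here $[p]^*$ is multiplication by $p$ on $\Pic^0(\overbar B)$ and by $p^2$ on $\NS(\overbar B)$. After inverting $p$ both are bijective, so the colimit of this constant system with isomorphic transition maps is simply $\Pic(\overbar B)\tf$, giving $\Pic(\wt B)\tf=\Pic(\overbar B)\tf$.

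It then remains to remove the inversion of $p$ on the left, i.e.\ to show $\Pic(\wt B)$ is already uniquely $p$-divisible. For this I would use that the shift exhibits $[p]\colon\wt B\to\wt B$ as an isomorphism, so that $[p]^*$ is bijective on $\Pic(\wt B)$; combined with the theorem of the cube on $\wt B$ (expressing $[p]^*$ as multiplication by $p$ on $\Pic^0$ and by $p^2$ on the Néron--Severi quotient), this forces multiplication by $p$ to be bijective on $\Pic(\wt B)$, whence $\Pic(\wt B)=\Pic(\wt B)\tf=\Pic(\overbar B)\tf$. I expect the two genuinely delicate points to be, first, the vanishing of $H^j(\wt B,\O)$ (respectively of the $\O^\sharp$- and $\Q_p$-cohomology), where one must control how $p$-adic completion interacts with the colimit along the $[p]$-tower; and second, justifying the theorem of the cube for the \emph{non-proper} cover $\wt B$, which I would reduce to the abeloid levels $B$ by an approximation/descent argument rather than invoking properness directly.
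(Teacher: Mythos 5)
Your skeleton is the same as the paper's: run Theorem~\ref{t:Pic(wt B)-vs-Pic-special-fibre} on the constant tower $\cdots\xrightarrow{[p]}\mathfrak B\xrightarrow{[p]}\mathfrak B$, and evaluate $\varinjlim_{[p]^{\ast}}\Pic(\overbar B)\tf=\Pic(\overbar B)\tf$ by the theorem of the cube on the abelian variety $\overbar B$ (this is exactly Lemma~\ref{l:colim[p]=colim p* on Pics}, applied over $k$). Your verification of hypothesis (i) via the eigenvalue $p^j$ of $[p]^{\ast}$ on $H^j(\mathfrak B,\O)$ and the vanishing of the $p$-completion of the resulting colimit is a legitimate alternative to the paper's route, which instead computes $\varinjlim_{[p]^\ast}H^j_{\et}(B,\Z/p^n)$ and invokes the Primitive Comparison Theorem (Proposition~\ref{p:H^i(wtA,O^+)}, Lemma~\ref{l:et-cohom-of-abeloids}); your coherent argument is fine in the good-reduction case, though your characteristic-$p$ sketch of the $\Q_p$- and $\O^\sharp$-cohomology vanishing is left vague where the paper makes it precise.

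The genuine gap is the last step, removing $\tf$ from $\Pic(\wt B)$. You propose ``the theorem of the cube on $\wt B$'', with $\Pic^0$ and a N\'eron--Severi quotient of $\wt B$, justified by reduction to the finite levels $B$. But $\wt B$ is not proper, the classical cube/seesaw arguments use properness essentially, and --- decisively --- line bundles on $\wt B$ do not in general descend to a finite level: by the very theorem you are proving together with Corollary~\ref{c:Q-in-case-of-B-of-good-reduction}, the cokernel of $\varinjlim\Pic(B)\tf\to\Pic(\wt B)$ is $\NS(\overbar B)\tf/\NS(B)$, which is nonzero whenever the reduction has larger Picard rank (e.g.\ supersingular reduction), so an approximation/descent reduction of the cube identity to the levels $B$ cannot reach a general $L\in\Pic(\wt B)$ without circularity. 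The observation you do justify --- that $[p]^{\ast}$ is bijective on $\Pic(\wt B)$ because $[p]$ is the shift automorphism of $\wt B$ --- is not by itself enough. The paper closes this point differently (Corollary~\ref{c:Pic(wt A)-is-p-divisible}): in characteristic $p$ unique $p$-divisibility is immediate since $\O^\times$ is uniquely $p$-divisible on a perfectoid space, and in characteristic $0$ one compares the Kummer sequences of $\wt B$ and of $\Spa(K)$, using $H^i(\wt B,\mu_p)\cong H^i(K,\mu_p)$ for $i=1,2$ together with $\Pic_v(\Spa(K))=1$ to conclude that multiplication by $p$ is bijective on $\Pic(\wt B)$. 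Note that this needs the torsion \'etale cohomology comparison for $\wt B$, which your purely coherent eigenvalue computation does not supply; so to repair your proof you should add that input (it follows from the same colimit argument applied to $H^i_{\et}(B,\mu_p)$) and replace the cube-on-$\wt B$ step by the Kummer argument.
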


For the proof we would like to apply Theorem~\ref{t:Pic(wt B)-vs-Pic-special-fibre}. The goal of this section is to verify that the conditions for this are satisfied.

Let more generally $A$ be any abeloid variety over $K$. Then $\wt A=\varprojlim_{[p]} A$ is perfectoid if $\Char K=p$, or if $K$ is algebraically closed \cite[Theorem~1]{perfectoid-covers-Arizona}, or if $A$ has good reduction \cite[Lemme~A16]{pilloni2016cohomologie}. While we expect it to be perfectoid in general, we can for the purpose of this section simply consider $\wt A$ as a spatial diamond outside of these cases.
The same applies to
\[ \wtt A:=\varprojlim_{[N],N\in \N} A.\]

The term ``universal cover'' might be slightly confusing in this setting: If $K$ is algebraically closed, then $\wtt A$ is the universal pro-finite-\'etale cover of $A$ in the sense of \cite[\S4.3]{heuer-v_lb_rigid}, i.e.\ it has a universal mapping property for pro-finite-\'etale maps. At least in the case of good reduction, the space $\wt A$ is the universal cover of $A$ in the sense of \cite[Definition 13.1]{ScholzePSandApplications}, where it originated from the universal covers of $p$-divisible groups of Scholze--Weinstein.

The reason why we call $\wt A$ the $p$-adic universal cover is the following topological universal property, which we think of as a $p$-adic analogue of saying that $\wt A$ is simply connected:
\begin{Proposition}\label{p:H^i(wtA,O^+)}
	Let $A$ be an abeloid variety over $K$. Then for any $i\in \Z_{\geq 0}$,
	\[H^i_{v}(\wt A,\O^+)\aeq \begin{cases}\O_K&\quad \text{ for }i=0,\\0& \quad \text{ for }i>0.\end{cases}\]
	Second, the natural map $H^i_v(K,\Zp)\to H^i_v(\wt A,\Zp)$ is an isomorphism. In particular, if $K$ is algebraically closed,
	\[H^i_v(\wt A,\Zp)=\begin{cases}\Z_p&\quad \text{ for }i=0,\\0& \quad \text{ for }i>0.\end{cases}\]
	The same applies to $\wtt A$, where $H^i_v(K,\Z_l)\isomarrow H^i_v(\wtt A,\Z_l)$ for any prime $l$.
\end{Proposition}
\begin{Remark}
This is mentioned in \cite[Bhatt's lecture, Proposition 2.2.1]{Lectures_Arizona} in the case of good reduction over an algebraically closed field of characteristic $0$.
\end{Remark}
\begin{proof}
	We first treat the case that $K$ is algebraically closed.
	
In the case that $\Char K=0$, we recall the following facts from the classical rigid theory: 
	\begin{Lemma}\label{l:et-cohom-of-abeloids}
	If $\Char(K)=0$, then for any $N\in \N$, we have
	\[H^{i}_{\et}(A,\Z/N\Z)=\wedge^i A[N](K)^\vee,\]
	and the map $[p]:A\to A$ induces on this the map defined by
	$\cdot p:A[N](K)^\vee\to A[N](K)^\vee$.
\end{Lemma}  
\begin{proof}
	Since we do not know a reference for this fact for abeloids, we briefly explain how to see this: By the Kummer sequence, we have $H^1(A,\mu_{N})=\Pic(A)[N]=A^\vee[N](K)$.  The Weil pairing thus induces an isomorphism $H^1(A,\Z/N\Z)=A[N](K)^\vee$. One then deduces like for abelian varieties that $H^{\ast}_{\et}(A,\Z/N\Z)=\wedge^\ast H^{1}_{\et}(A,{\Z/N\Z})$ using the rigid analytic K\"unneth formula (see e.g.\ \cite[Corollary 5.7]{heuer-relative-HT}).
	\end{proof}
	Since $\wt A$ is spatial, we have by \cite[Proposition 14.8]{etale-cohomology-of-diamonds} a comparison isomorphism
	\[H^i_{v}(\wt A,{\Z/p^n\Z})=H^i_{\et}(\wt A,{\Z/p^n\Z}).\]
	Using \cite[Proposition 14.9]{etale-cohomology-of-diamonds} and Lemma~\ref{l:et-cohom-of-abeloids}, we further have
	\[ H^i_{\et}(\wt A,{\Z/p^n\Z})=\varinjlim_{[p]^{\ast}}H^i_{\et}(A,{\Z/p^n\Z})=\varinjlim_{p^i}\wedge^i A[p^n](K)^\vee= \begin{cases}\Z/p^n\Z\quad &\text{ for }i=0,\\0 \quad &\text{ for }i>0.\end{cases}\]

	We now take the limit over $n$.  By Corollary~\ref{c:replete-derived-complete}, we have $\Rlim_n{\Z/p^n\Z}=\Zp$ and thus
	\[ \mathrm R\Gamma_{v}(\wt A,\Zp)=\mathrm \RGamma_{v}(\wt A,\Rlim \Z/p^n\Z)=\Rlim \mathrm \RGamma_{v}(\wt A, \Z/p^n\Z)=\Z_p. \]
	This finishes the proof of part 1.
	
	For the case of characteristic $p$, we could deduce the result from the fact that one can find an abeloid $A'$ over $K^\sharp$ such that $\wt A$ untilts to $\wt A'$ (see  \cite[\S5]{wear2020perfectoid}), which identifies \'etale sites. Alternatively, we now give a direct argument, which we feel is worth recording: It suffices to consider the case $n=1$ where $\Z/p^n\Z=\F_p$. We will show by induction on $k$ that $H^k_\et(\wt A,\F_p)=0$ for $k>1$. Suppose we know this already in degrees $<k$.
	In the colimit $\varinjlim_{[p]^\ast}$, the K\"unneth formula for $A$ then induces a decomposition
	\[ H^k(\wt A\times \wt A,\F_p)=\oplus_{i+j=k} H^i(\wt A,\F_p)\otimes H^j(\wt A,\F_p)=H^k(\wt A,\F_p)\oplus H^k(\wt A,\F_p)\]
	where we have used that all summands with $0<j<k$ vanish by induction assumption. With respect to this decomposition, we see that the multiplication $\wt A\times \wt A\to \wt A$ induces on cohomology the map
	\[ H^n(\wt A,\F_p)\to H^n(\wt A\times \wt A,\F_p),\quad x\mapsto (x,x)\]
	because we can detect the contribution to each factor by precomposing with the respective inclusions $\wt A\to \wt A\times \wt A$.
	Considering the $p$-fold multiplication $(\wt A)^p\to \wt A$ and composing with the diagonal embedding $\wt A\to (\wt A)^{p}$, we deduce that
	\[ [p]^\ast: H^k(\wt A,\F_p)\to  H^k(\wt A,\F_p)\]
	is given by the morphism $\cdot p$, which is clearly $=0$. But on the other hand, $[p]$ is an isomorphism on $\wt A$, so $[p]$ has to be an automorphism. This is only possible if $H^k(\wt A,\F_p)=0$.
	
	For part 2, it suffices to prove this for $\O^+$ replaced by $\O^+/\varpi$, as by Corollary~\ref{c:replete-derived-complete}
	\[\RGamma_{v}(\wt A,\O^+)\aeq \textstyle \Rlim_n \RGamma_{v}(\wt A,\O^+/\varpi^n).\]
	Using \cite[Propositions 8.5, 8.8 and 14.9]{etale-cohomology-of-diamonds}, we see that
	\[H^i_{v}(\wt A,\O^+/\varpi^n)=H^i_{\et}(\wt A,\O^+/\varpi^n)=\textstyle\varinjlim_{[p]^\ast} H^i_{\et}(A,\O^+/\varpi^n).\]
	If $\Char(K)=0$, the Primitive Comparison Theorem \cite[Theorem~1.3]{Scholze_p-adicHodgeForRigid} shows that for any $i\in \Z_{\geq 0}$ we have
	\[\varinjlim_{[p]^\ast} H^i_{\et}(A,\O^+/p^n)\aeq \varinjlim_{[p]^\ast} H^i_{\et}(A,\Z/p^n\Z)\otimes_{\Z_p}\O_K\aeq \begin{cases}\O_K/p^n&\quad \text{ for }i=0,\\0& \quad \text{ for }i>0.\end{cases}\]
	by the first part.
	This shows the statement for $H^i_{v}(\wt A,\O^+)$ when $\Char(K)=0$.
	
	 The case of $\Char(K)=p$ follows by the same argument from the characteristic $p$ version of  the Primitive Comparison Theorem \cite[Theorem~1.1]{PCT-char-p}.

	To deduce the general case, let $C$ be the completion of an algebraic closure of $K$ and consider the base change $A_C$ of $A$. Then we have a pro-finite-\'etale $G:=\Gal(C|K)$-torsor
	\[ \wt A_C\to \wt A_K.\]
	By the Cartan--Leray spectral sequence, we thus have
	\[ H^i_{\cts}(G,\O_K)\aeq H^i_v(\wt A_K,\O^+).\]
	We claim that the group on the left vanishes for $i\geq 1$ (compare \cite[Proposition~10]{tate1967p}). Indeed, by comparing to the $G$-torsor $\Spa(C)\to \Spa(K)$, we deduce that
	\[ H^i_{\cts}(G,\O_K)=H^i_v(\Spa(K),\O^+),\]
	and the right hand side is $\aeq 0$ as $\Spa(K)$ is affinoid perfectoid. 
\end{proof}
Before going on, we note as an application a more general form of \Cref{l:et-cohom-of-abeloids} which includes the case of characteristic $p$ and which seems useful to record in the literature:
\begin{Corollary}
	Let $A$ be any abeloid variety over any algebraically closed non-archimedean field. Let $N\in \N$ (we allow $\Char K$ to divide $N$). Then for any $i\in \N$,
	\[H^{i}_{\et}(A,\Z/N\Z)=\wedge^i H^{1}_{\et}(A,\Z/N\Z),\]
\end{Corollary}
\begin{proof}
	By \Cref{l:et-cohom-of-abeloids}, we may assume that $K$ has residue characteristic $p$.
	We consider the Cartan--Leray sequence of the cover $\wt A\to A$. By Proposition~\ref{p:H^i(wtA,O^+)}, this is of the form \[R\Gamma_\et(A,\Z/N\Z)=R\Gamma_\cts(T_pA,R\Gamma_\et(\wt A,\Z/N\Z))=R\Gamma_{\cts}(T_pA,\Z/N\Z)\]
	Since $T_pA\cong \Z_p^{2g}$ is a free $\Z_p$-module with trivial action on $\Z/N\Z$, this is computed by a Koszul complex, from which we obtain the desired result.
\end{proof}

Proposition~\ref{p:H^i(wtA,O^+)} shows that conditions (i) or (ii) of Theorem~\ref{t:Pic(wt B)-vs-Pic-special-fibre} are satisfied. To understand  the output of  Theorem~\ref{t:Pic(wt B)-vs-Pic-special-fibre} in this case, we also need the following statements:
\begin{Corollary}\label{c:Pic(wt A)-is-p-divisible}
	We have $\Pic(\wt A)=\Pic(\wt A)\tf$. Similarly, $\Pic(\wtt A)=\Pic(\wt A)\otimes \Q$.
\end{Corollary}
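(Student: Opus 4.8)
The statement to prove is that $\Pic(\wt A)=H^1_v(\wt A,\O^\times)$ is uniquely $p$-divisible (so that inverting $p$ changes nothing), and that passing to the full prime-to-$p$ tower $\wtt A$ upgrades this to a $\Q$-vector space. The plan is to read off these divisibility properties directly from the cohomological vanishing in Proposition~\ref{p:H^i(wtA,O^+)}, handling the two residue characteristics by different sheaf-theoretic inputs, and then to bootstrap the second assertion by running the same argument once for every prime $\ell$. Throughout I identify $\Pic(\wt A)=H^1_v(\wt A,\O^\times)$ (via Theorem~\ref{t:KL-line-bundles} when $\wt A$ is perfectoid, and as the definition otherwise), and I use that multiplication by $p$ on this group is the map induced by the $p$-power endomorphism $\O^\times\xrightarrow{(\cdot)^p}\O^\times$.

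In characteristic $p$ the $p$-divisibility is immediate: here $\wt A$ is perfectoid, so its structure sheaf is perfect and $(\cdot)^p$ is the absolute Frobenius, hence an isomorphism of $\O^\times_v$. It therefore induces an isomorphism on $H^1_v$, i.e.\ multiplication by $p$ is bijective on $\Pic(\wt A)$; this is exactly the observation already recorded in the proof of Lemma~\ref{l:bOx-in-arbitrary-char}. In characteristic $0$ I would instead use the Kummer sequence $1\to\mu_p\to\O^\times\xrightarrow{p}\O^\times\to1$ together with the unit section. Writing $\pi:\wt A\to\Spa(K)$ for the structure map and $e$ for the origin, $\pi\circ e=\id$ forces $\pi^*$ to be split injective on all $v$-cohomology, so setting $\wt H^i(\mathcal F):=\ker(e^*)$ gives a natural splitting $H^i_v(\wt A,\mathcal F)=H^i_v(K,\mathcal F)\oplus\wt H^i(\mathcal F)$ compatible with the Kummer sequence. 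The key input is $\wt H^i(\mu_p)=0$, i.e.\ that $\pi^*:H^i_v(K,\mu_p)\to H^i_v(\wt A,\mu_p)$ is an isomorphism: this I would deduce from the isomorphism $H^i_v(K,\Z_p)\isomarrow H^i_v(\wt A,\Z_p)$ of Proposition~\ref{p:H^i(wtA,O^+)}, twisted by $\Z_p(1)$ (the twist being harmless since it is trivial over $C$) and reduced modulo $p$. Feeding $\wt H^1(\mu_p)=\wt H^2(\mu_p)=0$ into the reduced Kummer sequence makes $p$ an isomorphism on $\wt H^1(\O^\times)$; since $H^1_v(K,\O^\times)=\Pic(K)=0$ we have $\Pic(\wt A)=\wt H^1(\O^\times)$, which is thus uniquely $p$-divisible.

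For $\wtt A$ the same two arguments apply verbatim for every prime $\ell$: for $\ell\neq p$ in either characteristic, and for $\ell=p$ in characteristic $0$, one uses the Kummer sequence for $\ell$ together with $H^i_v(K,\Z_\ell)\isomarrow H^i_v(\wtt A,\Z_\ell)$ from Proposition~\ref{p:H^i(wtA,O^+)}; for $\ell=p$ in characteristic $p$ one uses Frobenius as above. Hence $\Pic(\wtt A)$ is uniquely $\ell$-divisible for every $\ell$, i.e.\ a $\Q$-vector space. To identify it with $\Pic(\wt A)\otimes\Q$ I would use that $\wtt A\sim\varprojlim_{[m]}\wt A$ is a cofiltered limit over the prime-to-$p$ multiplication maps $[m]:\wt A\to\wt A$, each of which is finite étale; a limit-commutation for $\bOx$-cohomology in the spirit of Proposition~\ref{p:cohom-comparison} then yields $\Pic(\wtt A)=\varinjlim_{[m]^*}\Pic(\wt A)$, and $[m]^*$ acts invertibly after $\otimes\Q$ (as $m$ on the $\Pic^0$-part and $m^2$ on $\NS$, transported from the theorem of the cube on $A$), so the colimit inverts precisely the primes $\ell\neq p$ and recovers $\Pic(\wt A)\otimes\Q$.

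The step I expect to be the main obstacle is the characteristic-$0$ comparison $\wt H^i(\wt A,\mu_p)=0$: everything hinges on promoting the $\Z_p$-coefficient statement of Proposition~\ref{p:H^i(wtA,O^+)} to $\mu_p$-coefficients \emph{compatibly with} $\pi^*$, which is where the Tate twist and the Cartan--Leray bookkeeping must be done with care. For the $\wtt A$-identification the delicate point is instead the limit-commutation for $H^1(-,\O^\times)$ over the merely quasi-compact (non-affinoid) tower, and the $\Pic^0/\NS$ description of the $[m]^*$-action on $\wt A$; I note, however, that the purely abstract conclusion that $\Pic(\wtt A)$ is a $\Q$-vector space needs neither ingredient and follows at once from the all-primes divisibility.
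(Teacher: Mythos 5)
Your proof is correct and, for the statement as the paper actually proves it, follows essentially the same route: in characteristic $p$ the Frobenius/perfectoidness argument, in characteristic $0$ the Kummer sequence combined with the fact that $\mu_p$-cohomology pulls back isomorphically from $\Spa(K)$ (Proposition~\ref{p:H^i(wtA,O^+)}) together with $\Pic_v(\Spa(K))=1$, and for $\wtt A$ the same argument run for every prime $\ell$ using the $\Z_\ell$-statement of that proposition. The paper's own proof compares the two Kummer long exact sequences along $\pi\colon\wt A\to\Spa(K)$; your splitting $H^i_v(\wt A,\mathcal F)=H^i_v(K,\mathcal F)\oplus\ker(e^{\ast})$ is a cleaner packaging of the same diagram, with the added merit that it makes the vanishing of the boundary map $\Pic(\wt A)\to H^2_v(\wt A,\mu_p)$ (i.e.\ surjectivity of multiplication by $p$) completely explicit, a point the paper's diagram chase leaves to the reader. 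Likewise, the $\Z_p(1)$-twisting and mod-$p$ reduction you flag as delicate is exactly the bookkeeping the paper elides when it cites Proposition~\ref{p:H^i(wtA,O^+)} (stated for $\Z_p$-coefficients) to justify isomorphisms on $\mu_p$-cohomology; it goes through by the same Cartan--Leray descent used to prove that proposition.

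The only genuine divergence is your final step identifying $\Pic(\wtt A)$ with $\varinjlim_{[m]^{\ast}}\Pic(\wt A)=\Pic(\wt A)\otimes\Q$. The paper does not prove this: its proof of the second assertion is only the all-primes divisibility, i.e.\ it establishes that $\Pic(\wtt A)$ is a $\Q$-vector space (and this weaker reading is what is used later, e.g.\ in the remark following Theorem~\ref{t:Pic(wt B)}). Your sketch of the stronger identification rests on two ingredients that are not established: commuting $H^1(-,\bOx)$ with the non-affinoid tower $\wtt A\sim\varprojlim_{[m]}\wt A$ (Proposition~\ref{p:cohom-comparison} applies only to affinoid tilde-limits, so one would need the quasi-compactness/\v{C}ech-type argument from the proof of Theorem~\ref{t:Pic(wt B)-vs-Pic-special-fibre}), and a theorem-of-the-cube description of $[m]^{\ast}$ on $\Pic(\wt A)$ itself, which cannot simply be transported from $A$ since line bundles on $\wt A$ need not descend to $A$. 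Since you explicitly observe that the corollary's content follows from the all-primes divisibility alone, this incomplete excursion does not undermine your proof; it would only matter if one insisted on the literal reading of the displayed identity.
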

\begin{proof}
	For $\wt A$, this is clear in characteristic $p$ since $\O^\times$ is then $p$-divisible as $\wt A$ is perfectoid. 
	If $\Char K=0$, consider the comparison of Kummer exact sequences along $\wt B\to \Spa(K)$:
	\[\begin{tikzcd}[column sep =0.4cm]
\dots\arrow[r]&	H^1(\wt B,\mu_p)\arrow[r]& H^1(\wt B,\O^\times)\arrow[r,"{[p]}"]& H^1(\wt B,\O^\times)\arrow[r] &	H^2(\wt B,\mu_p)\arrow[r]&\dots\\
	\dots \arrow[r]& H^1(K,\mu_p)\arrow[r]\arrow[u,"\sim"labelrotatep]& 1\arrow[r]\arrow[u]& 1\arrow[r]\arrow[u]&  H^2(K,\mu_p)\arrow[u,"\sim"labelrotatep]\arrow[r]&\dots
	\end{tikzcd}\]
	The middle terms in the bottom row vanish since $K$ is perfectoid and thus $\Pic_v(\Spa(K))=1$. The first and last vertical arrows are isomorphisms by Proposition~\ref{p:H^i(wtA,O^+)}. It follows that the middle morphism in the top row is an isomorphism. The case of $\wtt A$ is analogous.
\end{proof}
\begin{Lemma}\label{l:colim[p]=colim p* on Pics}
	Let $A$ be an (algebraic) abelian variety over any field $L$. Then
	\[\textstyle\varinjlim_{[p]^{\ast}}\Pic(A)=\Pic(A)[\tfrac{1}{p}].\]
\end{Lemma}
\begin{proof}
	This is entirely classical:
	The statement is clear on $\Pic^0$, since $[p]^{\ast}L=L^{\otimes p}$ for any $L\in \Pic^0(X)$. It thus suffices to see that for the N\'eron--Severi group,
	\[\textstyle\varinjlim_{[p]^{\ast}}\NS(A)=\NS(A)\tf.\]
	This holds as a consequence of the Theorem of the Cube, see \cite[II.8.(iv)]{MumfordAV}
\end{proof}

\begin{proof}[Proof of Theorem~\ref{t:Pic(wt B)}]	
	Let $\mathfrak B$ be the abelian formal model of $B$.
	We apply Theorem~\ref{t:Pic(wt B)-vs-Pic-special-fibre} to
	\[\cdots\xrightarrow{[p]}\mathfrak B \xrightarrow{[p]}\mathfrak B.\]
	The condition (i) that $H^j(\wt B,\O)=0$ for $j=1,2$ holds by Proposition~\ref{p:H^i(wtA,O^+)}. Similarly, if $\Char K=p$, condition (ii) asks that $H^j(\wt B_C,\O^\sharp)=0$ and $ H^j(\wt B_C,\Q_p)=0$ for $j=1,2$ as well $\O^+(\wt B_C)\aeq \O_C$, which is satisfied by Proposition~\ref{p:H^i(wtA,O^+)}. The conclusion of the theorem is
	\[\Pic(\wt B)\tf= \textstyle\varinjlim_{[p]^{\ast}}\Pic(\overbar{B})[\tfrac{1}{p}].\]
	By Corollary~\ref{c:Pic(wt A)-is-p-divisible}, the left hand side equals $\Pic(\wt B)$.
	By Lemma~\ref{l:colim[p]=colim p* on Pics} applied with $L=k$, we see that the right hand side equals $\Pic(\overbar{B})[\tfrac{1}{p}]$. 
\end{proof}
\begin{Remark}
	The exact same arguments show that $\Pic(\wtt B)=\Pic(\overbar B)\otimes \Q$.
\end{Remark}
Using Theorem~\ref{t:Pic(wt B)}, we get a full answer to Question~\ref{q:Pic(X)->Pic(X_infty)-surj} for $\wt B=\varprojlim B$:
\begin{Definition}\label{d:tt}
	Let $G$ be a rigid group. Following \cite[Definition 2.6, Proposition 2.14]{heuer-geometric-Simpson-Pic}, we define the topological $p$-torsion subgroup $\wh{G}\subseteq G$ by $\wh{G}=\underline{\Hom}(\Z_p,G)$, this is a rigid open subgroup. By \cite[Proposition 2.14]{heuer-geometric-Simpson-Pic}, its $K$-points can be described by
	\[ \wh G(K)=\{x\in G(K)\mid [p^n](x)\xrightarrow{n\to \infty} 0\}.\]
	We refer to \cite[\S2]{heuer-geometric-Simpson-Pic} for a systematic treatment.
\end{Definition}
\begin{Corollary}\label{c:Q-in-case-of-B-of-good-reduction}
	Let $B^\vee$ be the dual abeloid variety. Then there is an exact sequence
	\[ 0\to \wh B^\vee(K) \to \Pic(B)\to \Pic(\wt B)\to \NS(\overbar{B})\tf/\NS(B)\to 0.\]
\end{Corollary}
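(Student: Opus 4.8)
The plan is to show that, under the identification of Theorem~\ref{t:Pic(wt B)}, the pullback $\Pic(B)\to\Pic(\wt B)$ is nothing but the reduction map to the special fibre, and then to read off kernel and cokernel from the $\Pic^0$--$\NS$ filtration by a snake-lemma argument. First I would make the identification precise. Applying Theorem~\ref{t:Pic(wt B)-vs-Pic-special-fibre} to the tower $\cdots\xrightarrow{[p]}\mathfrak B\xrightarrow{[p]}\mathfrak B$, the isomorphism $\Pic(\wt B)=\varinjlim_{[p]^\ast}\Pic(\overbar{B})\tf$ is induced by pullback along $\wt B\to B$ together with the reduction $\Pic(B)\tf\to\Pic(\overbar{B})\tf=H^1_{\et}(B,\bOx)$ of Proposition~\ref{p:bOx-computes-line-bundles-on-special-fibre}. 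Since $[p]^\ast$ acts as multiplication by $p$ on $\Pic^0(\overbar{B})\tf$ and by $p^2$ on $\NS(\overbar{B})\tf$ (theorem of the cube, exactly as in Lemma~\ref{l:colim[p]=colim p* on Pics}), it is an automorphism of $\Pic(\overbar{B})\tf$; hence the canonical map from the zeroth term of the colimit is an isomorphism, and $\Pic(B)\to\Pic(\wt B)$ is identified with the reduction map $\rho\colon\Pic(B)\to\Pic(\overbar{B})\tf$. It therefore suffices to compute $\ker\rho$ and $\coker\rho$.

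Next I would compare the short exact sequences $0\to\Pic^0\to\Pic\to\NS\to 0$ for $B$ and for $\overbar{B}$, the latter after inverting $p$, using that $\Pic^0(B)=B^\vee(K)$ and $\Pic^0(\overbar{B})=\overbar{B}^\vee(k)$ and that $\rho$ is a map of extensions. The snake lemma reduces the problem to the induced maps $\rho^0\colon B^\vee(K)\to\overbar{B}^\vee(k)\tf$ and $\rho^{\NS}\colon\NS(B)\to\NS(\overbar{B})\tf$. For the Néron--Severi part, the specialisation $\NS(B)\to\NS(\overbar{B})$ is injective, because in the good-reduction situation intersection numbers are preserved, so a class numerically trivial on $\overbar{B}$ is numerically, hence algebraically, trivial on $B$; as both groups are finitely generated and torsion-free, $\rho^{\NS}$ is injective with $\coker\rho^{\NS}=\NS(\overbar{B})\tf/\NS(B)$, which is the claimed cokernel.

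For the $\Pic^0$ part I would use that the reduction $B^\vee(K)\to\overbar{B}^\vee(k)$ is surjective --- the dual abelian model $\mathfrak B^\vee$ is smooth over the henselian ring $\O_K$ --- with kernel the points of the formal group, all of which are topologically $p$-torsion. The crucial observation is that after inverting $p$ the kernel becomes exactly $\wh B^\vee(K)$ in the sense of Definition~\ref{d:tt}: $\ker\rho^0$ is the preimage of $\overbar{B}^\vee(k)[p^\infty]$, and since the kernel of reduction is an open subgroup on which $[p^n]\to 0$, an element $x$ satisfies $[p^n](x)\to 0$ precisely when its reduction is $p$-power torsion. Thus $\ker\rho^0=\wh B^\vee(K)$ is the analytic $p$-divisible subgroup, consistent with $\Pic^0(\wt B)=(B^\vee(K)/\wh B^\vee(K))\tf$ from Theorem~\ref{t:ext wt A,Gm-intro}. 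Feeding $\ker\rho^{\NS}=0$ and the vanishing of $\coker\rho^0$ into the snake sequence then yields the asserted four-term exact sequence.

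The main obstacle is the $\Pic^0$ part, and specifically the surjectivity of $\rho^0$: this amounts to the $p$-divisibility of $\overbar{B}^\vee(k)$ (equivalently of $B^\vee(K)/\wh B^\vee(K)$), which is what makes $\coker\rho^0$ vanish so that $\NS(\overbar{B})\tf/\NS(B)$ is the entire cokernel. This $p$-divisibility is immediate when $k$ is algebraically closed, where $[p]$ is surjective on $\overbar{B}^\vee(k)$; in general I would try to reduce to that case by Galois descent along $\wt B_C\to\wt B_K$, as in the proof of Theorem~\ref{t:Pic(wt B)-vs-Pic-special-fibre}, although some care is needed because both $\NS$ and $\Pic^0$ change under extension of the residue field, so the descent must be controlled on the two graded pieces separately. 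By comparison, the identification of the pullback with reduction and the injectivity on $\NS$ are routine once Theorem~\ref{t:Pic(wt B)} is in hand.
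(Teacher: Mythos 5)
Your route is the one the paper intends: Corollary~\ref{c:Q-in-case-of-B-of-good-reduction} is meant to fall out of Theorem~\ref{t:Pic(wt B)} by exactly the bookkeeping you set up, and your identification of the pullback with the reduction map (using that $[p]^{\ast}$ is invertible on $\Pic(\overbar{B})\tf$), your computation of the kernel, and your treatment of the N\'eron--Severi part are correct. Two small points: injectivity of $\NS(B)\to\NS(\overbar{B})$ is better justified via $\NS(B)\hookrightarrow\Hom(B,B^\vee)$ and injectivity of reduction on homomorphisms, since $B$ is only a rigid-analytic group and ``intersection numbers'' are not directly available; and note that this injectivity is also what guarantees that the kernel of $\Pic(B)\to\Pic(\wt B)$ is contained in $\Pic^0(B)$, so it enters the kernel claim and not only the cokernel.

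The genuine gap is the one you flag but do not close: exactness at $\Pic(\wt B)$. Since $B^\vee(K)\to\overbar{B}^\vee(k)$ is surjective, what you need is surjectivity of $\overbar{B}^\vee(k)\to\overbar{B}^\vee(k)\tf$, i.e.\ the vanishing of $\overbar{B}^\vee(k)\otimes(\Z[\tfrac{1}{p}]/\Z)$; equivalently, every point of $\overbar{B}^\vee(k)$ must lie in $p\,\overbar{B}^\vee(k)$ up to $p$-power torsion. This is immediate when $k$ is algebraically closed or finite, or when $\overbar{B}$ is supersingular (where $[p]$ is bijective on points over a perfect field), but for a general perfect residue field it is a nontrivial arithmetic assertion, and the Galois descent you propose along $\wt B_C\to\wt B_K$ does not deliver it: taking $\Gal(C|K)$-invariants of the $p$-divisible group $\overbar{B}^\vee(\overbar{k})$ does not produce a $p$-divisible group, the failure being measured by $H^1$ of the Galois group with values in $p$-power torsion, which has no reason to vanish; the descent arguments in the paper (proof of Theorem~\ref{t:ext wt A,Gm}) yield injectivity statements, not surjectivity of this kind. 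Concretely, for ordinary $\overbar{B}^\vee$ over a perfect field such as the perfection of a function field, dividing a $k$-point by $p$ amounts to splitting an \'etale Verschiebung torsor, whose class is insensitive to purely inseparable extensions, so passing to the perfection or to the perfectoid setting gives no formal reason for it to die. As written, your argument therefore proves the corollary when $k$ is algebraically closed (e.g.\ $K=C$); for arbitrary perfectoid $K$ the step forcing $\coker\rho^0$ to vanish is missing, and since it is the only non-formal input to the cokernel statement, it needs either an argument specific to this situation or an additional hypothesis on $k$ or on the reduction.
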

\begin{proof}
	By \cite[Proposition 2.14.(4)]{heuer-geometric-Simpson-Pic}, the subset $\wh B^\vee(K)\subseteq B^\vee(K)=\Pic^0(B)$ consists precisely of those points that reduce to a $p$-torsion point in $B(k)$, therefore to $0$ in $B(k)\tf$.
\end{proof}

In future work, we will deduce from Theorem~\ref{t:Pic(wt B)} a more general version of this for abeloid varieties, via Raynaud uniformisation and $\Ext$-groups.

\section{Picard functors}

In this section, our goal is to show that in the proper case, Theorems~\ref{t:Pic(wt B)-vs-Pic-special-fibre} and \ref{t:Pic(wt B)} can be geometrised to a statement about Picard functors: 

To motivate the discussion, let us first consider  an abeloid variety $\pi:B\to \Spa(K)$ of good reduction over $K$ with its $p$-adic universal cover 
\[\wt \pi:\wt B\to \Spa(K).\]
For algebraically closed $K$ over $\Q_p$, we have in \cite{heuer-diamantine-Picard} studied the ``diamantine'' Picard functor of $\pi$ defined on $\Perf_{K,\et}$. One motivation was that we wanted to characterise all pro-finite-\'etale line bundles on $B$ by comparing to the  Picard functor of $\wt B$, defined as
\[ \uP_{\wt B}:=R^1\wt \pi_{\et\ast}\G_m:\Perf_{K,\et}\to \mathrm{Ab}.\]
It was left open whether $\uP_{\wt B}$ itself admits a nice description. Equipped with the preparations from the previous sections, we now show that this is indeed the case: Not only the Picard group but also the Picard functor can be described in terms of the special fibre. In fact, we can do this more generally in the setting of Theorem~\ref{t:Pic(wt B)-vs-Pic-special-fibre}. This gives some idea what to expect in general for ``Picard varieties'' of pro-proper objects e.g.\ in the pro-\'etale site.

\subsection{$v$-sheaves associated to schemes over the residue field}
We first need to introduce  $v$-sheaves associated to schemes over the residue field $k$:

\begin{Definition}
Let $X$ be any $k$-scheme. Then we denote by $X^\diamond$ the analytic sheafification of the presheaf $X^{\diamond\mathrm{pre}}$ on $\Perf_K$ defined by
\[ (S,S^+)\mapsto X(S^+/\mathfrak m),\]
where $S^+/\mathfrak m$ is considered as a $k$-algebra. Here we recall that $\mathfrak m\subseteq \O_K$ is the maximal ideal.
\end{Definition}
\begin{Lemma}\label{l:sheaf-assoc-to-special-fibre}
	$X^\diamond$ is already a $v$-sheaf. If $X$ is affine, then already $X^{\diamond\mathrm{pre}}$ is a $v$-sheaf.
\end{Lemma}
This is closely related to, but different from, the functor from perfect $k$-schemes to  $v$-sheaves defined in \cite[\S18]{ScholzeBerkeleyLectureNotes}. It is also closely related to Gleason's specialisation maps for diamonds, see \cite[\S4.5]{Gleason_Specialisation_for_Diamonds} for a detailed discussion of this relation.
\begin{proof}
	We first prove the statement about affine $X=\Spec(A)$. We choose any presentation
	\[  k[T_j|j\in J]\to k[T_i|i\in I]\xrightarrow{h} A\to 0\]
	where the images of the $T_j$ generate the kernel of $h$ as an ideal. This
	reduces us to the case that $X=\A^I$. Then $X^{\diamond\mathrm{pre}}$ is by definition the functor that sends $S^+$ to $(S^+/\mathfrak m)^{I}$, so it suffices to prove the result for $\A^1$. In this case, we have a short exact sequence on $\Perf_{K,v}$
	\[ 0\to \m\O^+\to \O^+\to \O^+/\mathfrak m\to 0 \]
	which evaluated on any $(S,S^+)$ induces a long exact sequence
	\[ 0\to \m S^+\to S^+\to \O^+/\mathfrak m(S,S^+)\to H^1_v(\Spa(S,S^+),\m\O^+)\]
	The last term vanishes: Indeed, $\O^+$ is almost acyclic on $\Perf_{K,v}$ by \cite[Proposition 8.8]{etale-cohomology-of-diamonds} and on the affinoid perfectoid space $Y=\Spa(S,S^+)$,  we have:
	\[ H^1_v(Y,\m\O^+)=\varinjlim_{n} H^1_v(Y,\varpi^{\epsilon_n}\O^+)=\varinjlim_{n} (H^1_v(Y,\O^+)\xrightarrow{\cdot \varpi^{\epsilon_n-\epsilon_{n+1}} }H^1_v(Y,\O^+))=0\]
	for any sequence $\epsilon_n\to 0$ in $\log|K|$. Thus $(\A^1)^{\diamond \mathrm{pre}}=\O^+/\mathfrak m$, which is a $v$-sheaf.
	
	To deduce the first part, we first note that for any affinoid perfectoid $T$ over $K$, any point $T\to X^\diamond$ induces a continuous map $|T|\to |X|$ sending a point $\Spa(C,C^+)\to T$ to the image of $\Spec(C^+/\m)\to X$ under the identification $|\Spa(C,C^+)|=|\Spec(C^+/\mathfrak m)|$.  
	
	Let now $S'\to S$ be a $v$-cover in $\Perf_{K}$ and suppose we are given a map $S'\to X^\diamond$ such that the maps $S'\times_SS'\rightrightarrows S'\to X^\diamond$ agree.  Arguing like in \cite[Corollary 8.6]{etale-cohomology-of-diamonds}, the fact that $S'\to S$ is a quotient map by \cite[Lemma~2.5]{etale-cohomology-of-diamonds} implies that the induced diagram of spectral spaces $|S'\times_SS'|\rightrightarrows |S'|\to |X|$ induces a continuous map $|S|\to |X|$. Thus the analytic sheafification in the definition of $X^\diamond$  reduces us to the affine case.
\end{proof}

We emphasize that $-^\diamond$ is just notation, and we think that $X^\diamond$ can only be a diamond if $X$ is $0$-dimensional. For example, one can show that $\A^{1\diamond}=\O^+/\mathfrak m$ is not a diamond.

\subsection{The Picard functor of $\wt B$}
We can now ``geometrise'' Theorem~\ref{t:Pic(wt B)-vs-Pic-special-fibre} as follows: Like in \cite{heuer-diamantine-Picard}, we consider for any perfectoid space $\pi:X\to \Spa(K)$ the Picard functor defined on perfectoid test objects:
\[\uP_X:=R^1\pi_{\et\ast}\G_m:\Perf_{K,\et}\to \mathrm{Ab}.\]
\begin{Theorem}\label{t:Pic-functor-for-wtB}
	Let $K$ be a complete algebraically closed field over $\Q_p$.
 As in Theorem~\ref{t:Pic(wt B)-vs-Pic-special-fibre}, let $(\mathfrak X_i)_{i\in I}$ be a cofiltered inverse system of smooth quasi-compact formal $\O_K$-schemes with affine transition maps and with perfectoid generic fibre $X_\infty$ of $\mathfrak X_\infty=\varprojlim \mathfrak X_i$. Assume moreover that the $\mathfrak X_i$ are all proper and that $H^j(X_\infty,\O^+)\aeq 0$ for $j=1,2$. Then
	\[ \uP_{X_\infty}\tf=\textstyle\varinjlim_{i\in I} (\uP_{\overbar X_i})^\diamond\tf,\]
where $\overbar{X}_i$ is the special fibre of $\mathfrak X_i$ and $\uP_{\overbar X_i}$ is its Picard variety over $k$. 
\end{Theorem}
We can thus give a geometric version of Theorem~\ref{t:Pic(wt B)} and Corollary~\ref{c:Q-in-case-of-B-of-good-reduction}:
\begin{Corollary}\label{c:Picard-functor-of-universal-cover}
	Let $B$ be an abeloid variety with good reduction over $K$. Let $\overbar{B}$ be the special fibre. Then there is a natural isomorphism of $v$-sheaves
	\[ \uP_{\wt B}=(\uP_{\overbar{B}})^\diamond\tf.\]
	In particular, there is a short exact sequence of abelian $v$-sheaves on $\Perf_K$:
	\[ 0\to \wh B^{\vee}\to \uP_B\to\uP_{\wt B}\to {\NS(\overbar B)\tf/\NS(B)}\to 0\]
	where $B^\vee$ is the dual abeloid, $\wh B^{\vee}$ is its topological $p$-torsion subgroup from \Cref{d:tt} and 
	 the last term is the locally constant $v$-sheaf of the abelian group $\NS(\overbar B)\tf/\NS(B)$.
\end{Corollary}
\begin{proof}
	We have already seen in the proof of Theorem~\ref{t:Pic(wt B)} that the conditions of Theorem~\ref{t:Pic-functor-for-wtB} are satisfied. This gives an isomorphism $\uP_{\wt B}\tf=(\uP_{\overbar{B}})^\diamond\tf$.
	Left-exactness of the sequence follows from \cite[Proposition 2.14]{heuer-geometric-Simpson-Pic}: This describe $\wh B^{\vee}$ as the subfunctor of $B$ of points that reduce to a torsion point in $\overline{B}$.  To see right-exactness, we first note that $B^\vee\to (\overbar{B}^\vee)^\diamond$ is surjective since $B^\vee$ is formally smooth. We now use:
	\begin{Lemma}\label{l:mupinfty-on-uni-cover}
	If $H^j(X_\infty,\mu_{p^\infty})=0$ for $j=1,2$, then $\uP_{X_\infty}=\uP_{X_\infty}\tf$.
\end{Lemma}
\begin{proof}
	Write $\pi:X_\infty\to \Spa(K)$. It suffices to prove  $R^j\pi_{\ast}\mu_{p^\infty}=0$ for $j=1,2$. For any affinoid perfectoid space $Y$, we have by \cite[Proposition~14.9]{etale-cohomology-of-diamonds}:
	\[H^j(X_\infty\times Y,\mu_{p^n})=\textstyle\varinjlim_{i\in I} H^j(X_i\times Y,\mu_{p^n}).\]
	The sheafification in $Y$ of the right hand side is by \cite[Corollary~4.8]{heuer-diamantine-Picard} given by the constant sheaf associated to
	\[ \varinjlim H^j(X_i,\mu_{p^n})=H^j(X_\infty,\mu_{p^n})=0.\qedhere\]
\end{proof}
Finally, $\varinjlim_{[p]^{\ast}} (\uP_{\overbar{B}})^\diamond\tf=(\uP_{\overbar{B}})^\diamond\tf$ by Lemma~\ref{l:colim[p]=colim p* on Pics}.
This shows Corollary~\ref{c:Picard-functor-of-universal-cover}.
\end{proof}

\subsection{Preparation: specialisation lemmas}
The proof of \Cref{t:Pic-functor-for-wtB} is technically demanding, and we need some technical lemmas as a preparations. We begin with specialisation lemmas that works in either characteristic. These will later be useful to study the boundary maps of the exponential sequence:

\begin{Lemma}\label{l:specialisation-for-bOx-in-products}
	Let $X$ and $Y$ be each a rigid or perfectoid space. Assume $Y$ is quasi-compact. 
	\begin{enumerate}
		\item The image of the following natural specialisation map 
		lies in the locally constant maps:
		\[ H^1_{\et}(X\times Y,\bOx)\to \Map(X(K),H^1_{\et}(Y,\bOx)).\]
		\item\label{i:commuting-bOx-with-profinite-sets} If $X$ is a locally profinite space, then this induces an isomorphism
		\[H^1_{\et}(X\times Y,\bOx)=\Map_{\lc}(X(K),H^1_{\et}(Y,\bOx)).\]
	\end{enumerate}
\end{Lemma}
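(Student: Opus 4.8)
\emph{Setting up the fibrewise picture and reducing part 1.}
The plan is to read the specialisation map fibrewise: for $x\in X(K)$ let $i_x\colon Y\cong\{x\}\times Y\hookrightarrow X\times Y$ be the inclusion of the fibre, so that the map in question sends a class $\alpha\in H^1_{\et}(X\times Y,\bOx)$ to the function $f_\alpha\colon x\mapsto i_x^{\ast}\alpha$. Since the residue disks (loci of points with a fixed reduction modulo $\m$) form a basis of open subsets of $X(K)$, part 1 is exactly the assertion that $i_{x_0}^{\ast}\alpha=i_{x_1}^{\ast}\alpha$ whenever $x_0,x_1\in X(K)$ have the same reduction. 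The mechanism I would use is that $\bOx$ is insensitive to $\m$-adic information, isolated as a key lemma: if two morphisms $\psi_0,\psi_1\colon Z'\to Z$ of rigid or perfectoid spaces agree modulo $\m$ (induce the same ring map $\O^+(Z)\to\O^+(Z')/\m\O^+(Z')$), then $\psi_0^{\ast}=\psi_1^{\ast}$ on $H^{\ast}(Z,\bOx)$. Granting this, I would simply observe that $i_{x_0}$ and $i_{x_1}$ agree modulo $\m$ precisely because $x_0$ and $x_1$ have the same reduction, and part 1 follows.

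\emph{Proving the key lemma and the main obstacle.}
By Lemma~\ref{l:bOx-in-arbitrary-char}, $\bOx=(\O^\times/\O^\times_1)\tf$ is built out of units modulo the principal units $\O^\times_1=1+\m\O^+$, so it only retains information modulo $\m$. Concretely, any section of $\bOx$ lifts étale-locally on an affinoid to a unit $u\in\O^\times$, which after rescaling by a constant (cancelling in what follows) we may take to be integral with unit reduction; then $\psi_0^{\ast}u-\psi_1^{\ast}u\in\m\O^+$ while $\psi_1^{\ast}u$ is a unit, so $\psi_0^{\ast}u/\psi_1^{\ast}u\in 1+\m\O^+=\O^\times_1$, i.e.\ $\psi_0^{\ast}$ and $\psi_1^{\ast}$ agree after passing to the quotient $\bOx$. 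The main obstacle is to upgrade this statement about sections to an equality of the induced maps on $H^1$: the two pullbacks a priori involve the two different étale (resp.\ analytic) covers $\psi_0^{\ast}\mathfrak U$ and $\psi_1^{\ast}\mathfrak U$ of $Z'$, and one must identify these canonically using $\psi_0\equiv\psi_1\bmod\m$ before one can compare cocycles. I expect the cleanest route is to show that $\bOx$ is pulled back along a reduction map, so that $\psi_0^{\ast}$ and $\psi_1^{\ast}$ coincide as functors on $\bOx$ outright; this is the technical heart, and is exactly where the slogan that $\bOx$ ``computes line bundles on the special fibre'' does the work.

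\emph{Part 2 as a limit computation.}
Write the profinite set as $S=\varprojlim_n S_n$ with $S_n$ finite, so that $\underline{S_n}\times Y=\bigsqcup_{S_n}Y$ and hence
\[ H^1_{\et}(\underline{S_n}\times Y,\bOx)=\prod_{S_n}H^1_{\et}(Y,\bOx)=\Map(S_n,H^1_{\et}(Y,\bOx)),\]
the $s$-component being restriction to the copy of $Y$ indexed by $s$. Since $X\times Y=\underline S\times Y\sim\varprojlim_n \underline{S_n}\times Y$, the continuity of $\bOx$-cohomology along this cofiltered limit — which is Proposition~\ref{p:cohom-comparison} in the affinoid perfectoid case, and reduces to it using that $Y$ is quasi-compact (cover $Y$ by finitely many affinoids and use that $\Map_{\lc}(S,-)$ is exact to run Mayer--Vietoris) — yields
\[ H^1_{\et}(\underline S\times Y,\bOx)=\varinjlim_n\Map(S_n,H^1_{\et}(Y,\bOx))=\Map_{\lc}(S,H^1_{\et}(Y,\bOx)),\]
the last equality because a locally constant function on $\varprojlim_n S_n$ factors through some $S_n$.

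\emph{Matching the two maps.} Finally I would check that the isomorphism just produced is the specialisation map: passing to the colimit turns the fibrewise restrictions above into restriction to the fibres over $S=X(K)$, and part 1 already guarantees that the specialisation map lands in $\Map_{\lc}$. Surjectivity is then also visible directly, by pulling back a prescribed value in $H^1_{\et}(Y,\bOx)$ along the projection $\underline T\times Y\to Y$ on each clopen piece $\underline T\subseteq\underline S$ of a decomposition refining the given locally constant function; and injectivity follows because a class already defined at a finite level $S_n$ that restricts to $0$ on every fibre must vanish in the finite product $\prod_{S_n}H^1_{\et}(Y,\bOx)$.
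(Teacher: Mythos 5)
There is a genuine gap in your part 1. You reduce it to the ``key lemma'' that two morphisms $\psi_0,\psi_1$ agreeing modulo $\m$ induce the same pullback on $H^{1}(-,\bOx)$, but you never prove this: you yourself flag the passage from sections to $H^1$-classes (identifying the two pulled-back covers and comparing cocycles) as ``the technical heart'' and leave it as an expectation that $\bOx$ is pulled back along a reduction map. That expectation is essentially the good-reduction machinery of Proposition~\ref{p:bOx-computes-line-bundles-on-special-fibre}, which needs a \emph{smooth} formal model and is not available for the arbitrary (possibly non-smooth, possibly perfectoid) $X$ and $Y$ of this lemma. Moreover, even your sections-level sketch is flawed: a unit on an affinoid cannot in general be rescaled by a constant to lie in $\O^{+\times}$ (take the coordinate on an annulus $\{r\le|x|\le 1\}$ with $r<1$), and for a unit $u$ of non-constant absolute value the quotient $\psi_0^{\ast}u/\psi_1^{\ast}u$ need not lie in $1+\m\O^+$ just because $\psi_0^{\ast}u-\psi_1^{\ast}u\in\m\O^+$, since one must divide by $\psi_1^{\ast}u$, which may be small. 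Note also that constancy on residue disks is strictly stronger than what part 1 asserts (local constancy), so you have replaced the statement by a harder one without gaining a proof of either.

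The paper's route avoids all of this: it reduces to $X,Y$ affinoid and, by rigid approximation, to rigid spaces; identifies $H^1_{\et}(X\times Y,\bOx)$ with $\Pic(X\times Y)\tf$ via Lemma~\ref{l:bOx-in-arbitrary-char}; and then proves \emph{local} constancy directly by a spreading-out lemma (Lemma~\ref{l:spreading-out-triviality}): a line bundle trivial on $\{x\}\times Y$ is trivial on $U\times Y$ for some affinoid neighbourhood $U\ni x$, because $\{x\}\times Y\sim\varprojlim_i U_i\times Y$ and Corollary~\ref{c:Gabber-Ramero} (Gabber--Ramero) lets one descend triviality to a finite level; general quasi-compact $Y$ is then handled by a finite affinoid cover and a \cH-to-sheaf argument reducing to $H^0$. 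Your part 2 is in the same spirit as the paper's (exactness of $\Map_{\lc}$, finiteness of the covers), though the paper argues via $R^1\pi_{\ast}\bOx=0$ and \cH{} covers of $Y$ rather than a limit over finite quotients of the profinite set; as written, your appeal to Proposition~\ref{p:cohom-comparison} for the limit $\underline{S}\times Y\sim\varprojlim \underline{S_n}\times Y$ does not literally apply when $Y$ is rigid, since that limit is not affinoid perfectoid, so this step would also need an argument. The essential missing piece, however, is the unproven key lemma in part 1.
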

\begin{proof}
	We can clearly reduce to the case that $X$ is affinoid. Let us at first also assume that $Y$ is affinoid.
	By rigid approximation, e.g.\ \cite[Proposition~3.17]{heuer-diamantine-Picard}, we can then further reduce to the case that $X$ and $Y$ are rigid spaces.
	In this case, we have \[H^1_{\et}(X\times Y,\O^\times)\tf=H^1_{\et}(X\times Y,\bOx)\]
	by Lemma~\ref{l:bOx-in-arbitrary-char}
	and we are thus reduced to proving the statement for $\O^\times$.
	
	Let now $L\in H^1_{\et}(X\times Y,\O^\times)$ and let $x\in X(K)$. Using the specialisation $Y\to X\times Y\to Y$ at $x$, we can after twisting by the pullback of $L_x^{-1}$ assume without loss of generality that $L$ specialises to the trivial bundle. In this case, the following lemma gives the desired result:
	\begin{Lemma}\label{l:spreading-out-triviality}
		Let $X$ and $Y$ be affinoid rigid spaces over $K$ and let $x\in X(K)$. Let $L$ be a line bundle on $X\times Y$ that is trivial on $\{x\}\times Y$. Then there is an affinoid open neighbourhood $x\in U\subseteq X$ such that $L$ is trivial on $U\times Y$.
	\end{Lemma}
	\begin{proof}
		Let $(U_i)_{i\in I}$ be the inverse system of open neighbourhoods of $x\in X$, then  we have
		\[ \{x\}\times Y\approx \varprojlim_{i \in I} U_i\times Y\]
		and consequently
		\[\O^+(\{x\}\times Y)=(\varinjlim_{i\in I} \O^+(U_i\times Y))^\wedge\]
		by \cite[ Lemma~3.10]{heuer-diamantine-Picard}.
		The statement thus follows from Proposition~\ref{p:Gabber-Ramero-5.4.42}.
	\end{proof}
	
	For part 1, we are left to deduce the result for general $Y$: Let $L\in H^1_{\et}(X\times Y,\bOx)$ and let $x\in X(K)$, then we can find a finite cover of $Y$ by affinoid opens $U_i$ such that $L$ is trivial on each $\{x\}\times U_i$. By rigid approximation and Lemma~\ref{l:spreading-out-triviality}, we can find an affinoid neighbourhood $x\in V\subseteq X$ such that $L$ is already trivial on each $V\times U_i$. We may without loss of generality replace $X$ by $V$. The \cH-to-sheaf sequence now reduces us to proving the lemma for $H^0$ instead of $H^1$, i.e.\ that
	\[H^0_{\et}(X\times Y,\bOx)\to \Map(X(K),H^0_{\et}(Y,\bOx))\]
	has image in the locally constant maps. Once again we can reduce to the case that $X$ and $Y$ are rigid. Here the statement follows from \cite[Lemma 4.10]{heuer-diamantine-Picard}: Applied to $Y$ and $X\times Y$, this asserts that specialisation induces  injective maps
	\[H^0_{\et}(Y,\bOx)\hookrightarrow  \Map_{\lc}(Y(K),K^\times/(1+\m))\] \[H^0_{\et}(X\times Y,\bOx)\hookrightarrow  \Map_{\lc}(X(K)\times Y(K),K^\times/(1+\m)),\]
	from which the  desired statement follows.

	To see part 2, let $\pi$ be the projection $X\times Y\to Y$, then $R^1\pi_{\et\ast}\bOx=0$ since we can compute this in the analytic topology by Lemma~\ref{l:bOx-in-arbitrary-char}.3. It follows that
	\[H^1_{\et}(X\times Y,\bOx)=H^1_{\et}(Y,\pi_{\ast}\bOx)=\textstyle\varinjlim_{\mathfrak U}\cH^1(\mathfrak U,\pi_{\ast}\bOx)\]
	where $\mathfrak U$ ranges through finite open covers of $Y$. We can now use that we have \[\bOx(X\times Y)=\Map_{\lc}(X(K),\bOx(Y)).\] Since the covers are finite and $\Map_{\lc}(X(K),-)$ is exact, it follows that 
	\[ \textstyle\varinjlim_{\mathfrak U}\cH^1(\mathfrak U,\pi_{\ast}\bOx) =\Map_{\lc}(X,\textstyle\varinjlim_{\mathfrak U}\cH^1(\mathfrak U,\bOx))=\Map_{\lc}(X(K),H^1_{\et}(Y,\bOx))\]
	which gives the desired isomorphism.
\end{proof}

\begin{Lemma}\label{l:factorisation-of-boundary-map}
	Let $T$ be an affinoid connected reduced rigid space. Then the boundary map of the exponential sequence has a factorisation 
	\[\begin{tikzcd}
		& {H^2_{\an}(X,\O)} \arrow[d, hook] \\
		{H^1_{\an}(X\times T,\bOx)} \arrow[r]\arrow[ru, dotted] & {H^2_{\an}(X\times T,\O)}.
	\end{tikzcd}\]
\end{Lemma}
\begin{proof}
	Using that $X$ is proper, it is easy to see (e.g.\ \cite[Proposition~4.2.1]{heuer-diamantine-Picard}) that we have 
	\[H^2_{\an}(X\times T,\O)=H^2_{\an}(X,\O)\otimes_K \O(T).\]
	Specialisation at points in $T(K)$ therefore defines a commutative diagram
	\[ \begin{tikzcd}
		{H^1_{\an}({X}\times {T},\bOx)} \arrow[d] \arrow[r] & {H^2_{\an}(X,\O)\otimes_K \O(T)} \arrow[d, hook] \\
		{\Map_{\lc}(T(K),H^1(X,\bOx))} \arrow[r] & {\Map_{\cts}(T(K),H^2(X,\O)),}
	\end{tikzcd}\]
	where we use Lemma~\ref{l:specialisation-for-bOx-in-products} to see that the image of the left map lies in the locally constant maps, and where the bottom map is defined as the composition with the boundary map $H^1(X,\bOx)\to H^2(X,\O)$ from the case of $T=\Spa(K)$. We conclude that the image of any element of the top left in the bottom right lands in the locally constant maps.
	
	But the map on the right is the evaluation $\O(T)\to \Map_{\cts}(T(K),K)$ tensored with the finite dimensional $K$-vector space $H^2(X,\O)$. Since $T$ is reduced, a function $f\in \O(T)$ can only be locally constant on $T(K)$ if $f$ is locally constant. As $T$ is connected, this implies $f\in K$. This shows that the image of the top map factors through the pullback $H^2(X,\O)\to H^2(X\times T,\O)$, as desired.
\end{proof}
\subsection{Proof of Theorem~\ref{t:Pic-functor-for-wtB}}
\begin{proof}[Proof of Theorem~\ref{t:Pic-functor-for-wtB}]
	We start by observing that in the proper case, the cohomological condition of Theorem~\ref{t:Pic-functor-for-wtB} automatically implies a relative version:
	\begin{Lemma}\label{l:rel-version-of-condition-(i)}
		For any affinoid perfectoid $Y$, we have 
		$H^j(X_\infty\times Y,\O^+)\aeq 0$ for $j=1,2$.
	\end{Lemma}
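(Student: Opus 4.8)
The plan is to reduce the relative vanishing to the absolute one by means of a proper base change (Künneth) isomorphism over the affinoid perfectoid base $Y$. Write $Y=\Spa(S,S^+)$; since $\O_K$ is a valuation ring, $S^+$ is $\varpi$-torsionfree, hence $\O_K$-flat, and it is $\varpi$-adically complete, i.e.\ $\varpi$-completely flat. Set $C:=\RGamma_v(X_\infty,\O^+)$, viewed as a derived $\varpi$-complete object of the derived category of almost $\O_K$-modules — it is derived complete by Corollary~\ref{c:replete-derived-complete} — and recall that by hypothesis $H^j(C)\aeq 0$ for $j=1,2$. The whole statement then follows from the Künneth-type comparison
\[ \RGamma_v(X_\infty\times Y,\O^+)\aeq C\,\hotimes^{\mathbb L}_{\O_K}S^+, \]
where $\hotimes^{\mathbb L}$ denotes the derived $\varpi$-completed tensor product: indeed, base change along the $\varpi$-completely flat $\O_K$-algebra $S^+$ is t-exact on derived $\varpi$-complete almost modules, so $H^j$ of the right-hand side is $\aeq H^j(C)\hotimes_{\O_K}S^+\aeq 0$ for $j=1,2$.

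To establish this comparison I would argue as follows. By repleteness of the $v$-site (Lemma~\ref{l:qproet-replete}) and Corollary~\ref{c:replete-derived-complete}, both sides are the derived limit over $n$ of their reductions modulo $\varpi^n$, so it suffices to produce a natural almost isomorphism $\RGamma_v(X_\infty\times Y,\O^+/\varpi^n)\aeq \RGamma_v(X_\infty,\O^+/\varpi^n)\otimes^{\mathbb L}_{\O_K/\varpi^n}S^+/\varpi^n$, compatibly in $n$. Writing $X_\infty\times Y\sim\varprojlim_{i}X_i\times Y$ and using that $\O^+/\varpi^n$-cohomology commutes with this tilde-limit — by quasi-compactness of the $X_i$ together with the limit comparisons already used in the paper (cf.\ Proposition~\ref{p:cohom-comparison}, \cite[Lemma~3.20]{heuer-diamantine-Picard} and \cite[Proposition~14.9]{etale-cohomology-of-diamonds}) — reduces the claim to each proper smooth $X_i$ over $K$. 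For fixed $X_i$ I would invoke proper base change: since $K$ is algebraically closed, the primitive comparison theorem identifies $R\pi_{iY\ast}(\O^+/\varpi^n)$, for the projection $\pi_{iY}\colon X_i\times Y\to Y$, with the pullback of $R\pi_{i\ast}(\O^+/\varpi^n)$, and combined with almost acyclicity of $\O^+$ on the affinoid perfectoid $Y$ (so that $\RGamma_v(Y,\O^+/\varpi^n)\aeq S^+/\varpi^n$ in degree $0$) and flatness of $S^+/\varpi^n$ this yields the mod-$\varpi^n$ Künneth formula. Taking the colimit over $i$ and then $\Rlim$ over $n$ gives the integral comparison.

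The main obstacle is precisely this proper base change step: it is where the properness hypothesis on the $\mathfrak X_i$ is indispensable — the analogous statement is false for affinoid $X$, whose $\O^+$-cohomology is far too large — and it is what upgrades the absolute vanishing to a genuinely relative one. A secondary point requiring care is the passage through the $\Rlim$: working modulo $\varpi$ alone loses information in degree $2$, since $H^2(X_\infty,\O^+/\varpi)$ sees the $\varpi$-torsion of $H^3(X_\infty,\O^+)$, which we do not control. It is only after reassembling the tower via the derived completed tensor product and invoking t-exactness of completely flat base change that these spurious contributions cancel and the vanishing in degrees $1$ and $2$ is recovered cleanly.
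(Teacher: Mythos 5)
Your overall skeleton --- reduce modulo $\varpi^n$, apply the relative primitive comparison theorem over each proper smooth $X_i$, pass to the colimit over $i$, and reassemble with $\Rlim$ using repleteness --- is exactly the skeleton of the paper's proof, and your mod-$\varpi^n$ K\"unneth comparison is fine (it is the paper's almost isomorphism $H^j_{\et}(X_i\times Y,\O^+/p^n)\aeq H^j_{\et}(X_i,\Z/p^n)\otimes_{\Z_p}\O^+(Y)$). The genuine gap is your final step: the assertion that ``base change along the $\varpi$-completely flat $\O_K$-algebra $S^+$ is t-exact on derived $\varpi$-complete almost modules''. This is not a standard fact, you give no proof of it, and it is precisely the point at issue. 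Unwinding $C\hotimes^{\mathbb L}_{\O_K}S^+=\Rlim_n\bigl(C\otimes^{\mathbb L}_{\O_K}S^+/\varpi^n\bigr)$, flatness of $S^+/\varpi^n$ over $\O_K/\varpi^n$ gives $H^j(C\otimes^{\mathbb L}S^+/\varpi^n)=H^j(C\otimes^{\mathbb L}\O_K/\varpi^n)\otimes_{\O_K/\varpi^n}S^+/\varpi^n$, and $H^2(C\otimes^{\mathbb L}\O_K/\varpi^n)$ is an extension of $H^3(C)[\varpi^n]$ by $H^2(C)/\varpi^n$. The second piece is almost zero by hypothesis, so after taking $\Rlim$ what you must prove in degree $2$ is exactly
\[\varprojlim_n\bigl(H^3(C)[\varpi^n]\otimes_{\O_K}S^+\bigr)\aeq 0.\]
The absolute case $S^+=\O_K$ (using derived completeness of $C$ and $H^2(C)\aeq 0$) gives $\varprojlim_n H^3(C)[\varpi^n]=0$, genuinely, since this limit is $\varpi$-torsion-free. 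But inverse limits do not commute with $-\otimes_{\O_K}S^+$, so this does not formally imply the displayed vanishing; your ``t-exactness'' is a restatement of that vanishing, not an argument for it, so the proof is circular at its crucial point.

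It is worth seeing why this step is delicate and how the paper circumvents it. Over $\Z_p$ the needed commutation \emph{is} true: any $p$-torsion-free, $p$-adically complete $\Z_p$-module is topologically free, $\O^+(Y)\cong\widehat\oplus_I\Z_p$ (lift an $\F_p$-basis of $\O^+(Y)/p$; this uses that $\Z_p$ is a discrete valuation ring with residue \emph{field}), whence for any inverse system of $p^n$-torsion $\Z_p$-modules $M_n$ one has $\varprojlim_n(M_n\otimes_{\Z_p}\O^+(Y))=\varprojlim_n\oplus_I M_n\hookrightarrow\prod_I\varprojlim_n M_n$. This is exactly why the paper first invokes the primitive comparison theorem: it replaces the coefficients by the $\Z_p$-modules $H^j_{\et}(X_\infty,\Z/p^n)$, deduces from the case $Y=\Spa(K)$ the genuine vanishing $\varprojlim_n H^j_{\et}(X_\infty,\Z/p^n)=0$ for $j=1,2$, and only then commutes the limit past $\otimes_{\Z_p}\O^+(Y)$. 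By completing over $\O_K$ instead, you discard this $\Z_p$-linear structure, and the analogous structure over $\O_K$ is not available: $S^+/\varpi$ is flat but in general \emph{not} free over $\O_K/\varpi$ (for instance, for $K=\C_p$ the perfectoid $\O_K$-algebra $S^+=\prod_{\N}\O_K$ has reduction containing elements with non-principal annihilator, which cannot happen in a free module), so $S^+$ need not be topologically free over $\O_K$ and the $\widehat\oplus$-argument breaks down. To repair your proof you would either have to establish the t-exactness claim for the specific complex at hand --- which in practice means reinstating the primitive comparison and the $\Z_p$-structure, i.e.\ the paper's argument --- or prove it for general derived complete almost modules, which I see no way to do and doubt is true.
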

	\begin{proof}
		Using \cite[Lemma 3.11]{heuer-diamantine-Picard} and \Cref{p:cohom-comparison}, we have
		\[H^j_{\et}(X_\infty\times Y,\O^+/p^n)=\varinjlim_i H^j_{\et}(X_i\times Y,\O^+/p^n).\]
		By  \cite[Proposition~4.2.2(iii)]{heuer-diamantine-Picard} and the Primitive Comparison Theorem, this is 
		\[\aeq\varinjlim_i H^j_{\et}(X_i,\Z/p^n)\otimes_{\Z_p}\O^+(Y).\]
		Furthermore, using  \cite[Proposition~14.9]{etale-cohomology-of-diamonds} this is
		\[= H^j_{\et}(X_\infty,\Z/p^n)\otimes_{\Z_p}\O^+(Y)\]
		for all $j,n\geq 0$. Comparing to the case of $Y=\Spa(K)$, the assumptions imply
		\[  H^1_{\et}(X_\infty\times Y,\O^+/p^n)\aeq 0.\]
		For $j=1,2$ this shows $R^1\varprojlim_n H^{j-1}(X_\infty\times Y,\O^+/p^n)\aeq 0$. Thus by Corollary~\ref{c:replete-derived-complete},
		\[  H^j_{\et}(X_\infty\times Y,\O^+)=H^j_{v}(X_\infty\times Y,\O^+)\aeq \textstyle\varprojlim_n(H^j_{\et}(X_\infty,\Z/p^n)\otimes_{\Z_p}\O^+(Y)).\]
		Since $\O^+(Y)$ is flat over $\Z_p$, we can now choose an isomorphism $\O^+(Y)\cong\widehat\oplus_I \Z_p$ by lifting an $\F_p$-basis of $\O^+(Y)/p$. This shows that the right-hand-side almost vanishes if and only if \[\textstyle\varprojlim_nH^j_{\et}(X_\infty,\Z/p^n)=0.\]
		As the case of $Y=\Spa(K)$ holds by assumption, this gives the desired result.
	\end{proof}

	We now  follow the strategy from \cite[\S2.3 and \S3]{heuer-diamantine-Picard}:	Let $Y=\Spa(S,S^+)$ be affinoid perfectoid and consider the long exact sequence of the exponential sequence of Proposition~\ref{p:bOx} on $X_\infty\times Y$. By 
	\cite[Proposition~3.17]{heuer-diamantine-Picard}, we have $Y\approx \varprojlim_{Y\to T} T$ where  $Y\to T$ runs through (isomorphism classes of) all affinoid smooth rigid spaces over $K$ to which $Y$ maps.
	 Using first Lemma~\ref{l:rel-version-of-condition-(i)} and then again \cite[Lemma 3.11]{heuer-diamantine-Picard} and \Cref{p:cohom-comparison}, we see that
	\begin{equation}\label{eq:rel-thm-first-step}
	H^1_{\et}(X_\infty\times Y,\O^\times\tf)=H^1_{\et}(X_\infty\times Y,\bOx)=\varinjlim_{i}\varinjlim_{Y\to T} H^1_{\et}(X_i\times T,\bOx)
	\end{equation}

	Fix some $i\in I$ and consider $X:=X_i$. Let $T=\Spa(A,A^\circ)$ be any affinoid smooth rigid space over $K$ and set $\mathfrak T=\Spf(A^\circ)$. For any admissible formal scheme $\mathfrak S$ over $\O_K$ of topologically finite type, let us denote by $\mathfrak S_{\ZAR}$ the big Zariski site of admissible topologically finite type formal schemes over $\O_K$. For any rigid space $S$ over $K$, we denote by $S_{\AN}$ the big site of rigid spaces over $S$ with the analytic topology. We obtain a base-change diagram
	\[\begin{tikzcd}
		X_{\AN} \arrow[d] \arrow[r, "\pi_{\AN}"] & \Spa(K)_{\AN} \arrow[d, "\lambda"] \\
		\mathfrak X_{\ZAR} \arrow[r, "\pi_{\ZAR}"] & \Spf(\O_K)_{\ZAR}
	\end{tikzcd}\]
	where $\lambda$ sends $\mathfrak S$ to its generic fibre $\mathfrak S_{\eta}$. To avoid confusion in the notation, let us denote by $\O^+$ the structure sheaf on   $\Spf(\O_K)_{\ZAR}$ and let $\overbar{\O}^{+\times}:=(\O^{+\times}/(1+\mathfrak m\O^+))\tf$. This is simply the units of the structure sheaf of the special fibre over $\Spec(k)$, up to formally inverting $p$. We also set $\O:=\O^+\tf$ on  $\Spf(\O_K)_{\ZAR}$.
	\begin{Proposition}\label{p:base-change-morph-for-bOx}
		The following base-change morphisms are isomorphisms:
		\begin{enumerate}
			\item $\alpha:\displaystyle\lambda^{\ast}R^1\pi_{\ZAR\ast}\O^{+\times}\to R^1\pi_{\AN\ast}\O^\times$,
		\item $\beta:\displaystyle\lambda^{\ast}R^1\pi_{\ZAR\ast}\bOpx\to R^1\pi_{\AN\ast}\bOx$.
		\end{enumerate}
	\end{Proposition}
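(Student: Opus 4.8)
The plan is to verify both base-change morphisms on sections over the objects $T=\Spa(A,A^\circ)$ generating $\Spa(K)_{\AN}$, namely the smooth affinoid rigid spaces together with their formal models $\mathfrak T=\Spf(A^\circ)$, and to identify source and target in each case with the Picard group of a special fibre. The geometric observation that makes this work is that $\mathfrak X\times_{\O_K}\mathfrak T$ is again a smooth formal $\O_K$-scheme, with generic fibre $X\times_K T$ and special fibre $\overbar{X}\times_k\overbar{T}$. Thus everything we need is a \emph{relative} instance, over $\mathfrak T$, of the absolute comparison results already established in this section, applied to $\mathfrak X\times\mathfrak T$ in place of $\mathfrak X$; the left vertical generic-fibre map of the base-change square is precisely the morphism $\lambda$ of Lemma~\ref{l:barO-computes-line-bundles-on-special-fibre} for $\mathfrak X\times\mathfrak T$.

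First I would identify the two targets. Since $R^1\pi_{\AN\ast}(-)$ is by definition the analytic sheafification of $T\mapsto H^1_{\an}(X\times T,-)$, the target of $\alpha$ sheafifies $T\mapsto \Pic_{\an}(X\times T)$, while by Lemma~\ref{l:bOx-in-arbitrary-char}.3 (identifying $H^1_{\an}$ with $H^1_{\et}$ for $\bOx$) the target of $\beta$ sheafifies $T\mapsto H^1_{\et}(X\times T,\bOx)$. On the source side, $\lambda$ is the generic-fibre functor, so $\lambda^\ast$ is computed as a filtered colimit over formal models: $\lambda^\ast R^1\pi_{\ZAR\ast}\O^{+\times}$ is the sheafification of $T\mapsto\varinjlim_{\mathfrak T}\Pic_{\Zar}(\mathfrak X\times\mathfrak T)$, and, using that $\bOpx$ is the structure sheaf of the special fibre after inverting $p$ so that $H^1_{\Zar}(\mathfrak X\times\mathfrak T,\bOpx)=\Pic(\overbar X\times\overbar T)\tf$, the source of $\beta$ is the sheafification of $T\mapsto\varinjlim_{\mathfrak T}\Pic(\overbar X\times\overbar T)\tf$.

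With these identifications, the heart of the argument is Proposition~\ref{p:bOx-computes-line-bundles-on-special-fibre}, which I would apply to the smooth formal scheme $\mathfrak X\times\mathfrak T$ to obtain a natural isomorphism $H^1_{\et}(X\times T,\bOx)\isomarrow \Pic(\overbar X\times\overbar T)\tf$. This matches source and target of $\beta$; note in particular that the left-hand side is manifestly independent of the choice of model $\mathfrak T$, so the colimit $\varinjlim_{\mathfrak T}$ on the source is already constant, and it remains only to check that the canonically defined base-change morphism induces exactly this comparison. For $\alpha$ I would pass through the supremum-norm exact sequences of Lemma~\ref{l:barO-computes-line-bundles-on-special-fibre}, applied relatively to $\mathfrak X\times\mathfrak T$, which fit the $\O^{+\times}$- and $\bOpx$-cohomologies (resp.\ the $\O^\times$- and $\bOx$-cohomologies on the analytic side) into a single diagram with the constant sheaf $\underline{\Gamma}$. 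Since the base-change morphism for $\underline{\Gamma}$ (resp.\ $\underline{\Gamma}\tf$) is trivially an isomorphism, a five-lemma argument reduces the isomorphy of $\alpha$ to that of $\beta$ together with the generic-fibre comparison contained in those sequences.

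The main obstacle I anticipate is bookkeeping rather than any single hard estimate: one must make the colimit over formal models $\mathfrak T$ of $T$ interact correctly with the two different sheafifications (analytic on the target, Zariski on the source), and verify that the \emph{abstract} base-change morphism arising from the commutative square genuinely is the comparison isomorphism supplied by the earlier lemmas, rather than merely some isomorphism between the two groups. A secondary point is that, because the $\mathfrak X_i$ are proper rather than affinoid, the affinoid form of Lemma~\ref{l:BL8-11} does not apply directly; it is precisely the generality of Proposition~\ref{p:bOx-computes-line-bundles-on-special-fibre} (valid for any smooth formal $\mathfrak X$) and of the sup-norm sequence of Lemma~\ref{l:barO-computes-line-bundles-on-special-fibre} that allows the argument to proceed in the proper case after inverting $p$, circumventing the integral deformation obstructions that would otherwise appear in $H^2$.
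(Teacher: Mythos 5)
Your reduction hinges on the claim that $\mathfrak X\times_{\O_K}\mathfrak T$ is again a \emph{smooth} formal $\O_K$-scheme, so that Proposition~\ref{p:bOx-computes-line-bundles-on-special-fibre} and the sup-norm sequence of Lemma~\ref{l:barO-computes-line-bundles-on-special-fibre} can be applied to it verbatim. That claim is false, and it is exactly where the argument breaks: for a smooth affinoid rigid space $T=\Spa(A,A^\circ)$ the canonical formal model $\mathfrak T=\Spf(A^\circ)$ is in general not smooth over $\O_K$; its special fibre $\overbar T=\Spec(A^\circ/\m)$ is merely reduced (this is why the Reduced Fibre Theorem appears in the actual proof) and can be singular and reducible --- e.g.\ for the annulus $A=K\langle x,y\rangle/(xy-\varpi)[\tfrac1\varpi]$ one gets $\overbar T=\Spec\,k[x,y]/(xy)$. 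Consequently $\mathfrak X\times\mathfrak T$ is not smooth and $\overbar X\times\overbar T$ is not irreducible, so neither the local integrality of the special fibre that underlies the multiplicativity of $\|\cdot\|_{\sup}$ in Lemma~\ref{l:barO-computes-line-bundles-on-special-fibre}, nor the vanishing of $H^1_{\Zar}(-,\underline{\Gamma})$ at the end of the proof of Proposition~\ref{p:bOx-computes-line-bundles-on-special-fibre}, is available for the product. In particular the section-wise isomorphism you assert, $H^1_{\et}(X\times T,\bOx)\cong\Pic(\overbar X\times\overbar T)\tf$, is not established (and is not what the Proposition asserts: the base-change maps are only claimed to be isomorphisms after sheafification over $T$). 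Your remark that the colimit over formal models $\mathfrak T$ is ``already constant'' is likewise unjustified: admissible blow-ups change the special fibre, and the passage to the canonical model $\Spf(A^\circ)$ is itself only valid after sheafification.

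Because of this, the actual proof has to argue quite differently, and asymmetrically in the two directions. Injectivity of $\beta$ is proved by specialisation at $K$-points of $T$: one uses representability of $\uP_{\overbar X}$ (smoothness and properness of $\overbar X$ alone, not of the product), reducedness of $\overbar T$ via the Reduced Fibre Theorem, and Proposition~\ref{p:bOx-computes-line-bundles-on-special-fibre} only in its absolute form for $\mathfrak X$; the resulting kernel $\Pic(\overbar T)\tf$ is then killed by sheafification. Surjectivity does not come from the special fibre at all: one first gets surjectivity of $\alpha$ from L\"utkebohmert's theorem that any line bundle on $X\times T$ extends to \emph{some} admissible formal model, deduces injectivity of $\alpha$ from that of $\beta$ via the integral and rigid exponential sequences, and finally obtains surjectivity of $\beta$ from the factorisation of the boundary map $H^1_{\an}(X\times T,\bOx)\to H^2_{\an}(X\times T,\O)$ through $H^2_{\an}(X,\O)$ (Lemma~\ref{l:factorisation-of-boundary-map}) together with a diagram chase. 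Your proposed five-lemma reduction of $\alpha$ to $\beta$ would need the sup-norm sequence for the non-smooth model $\mathfrak X\times\mathfrak T$, which you do not have; so both halves of the proposal rest on the missing smoothness and would have to be replaced by arguments of the above kind.
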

	\begin{proof}
		By definition, the map  $\alpha$ is the sheafification of the map that on $T\in (\Spa(K))_{\AN}$ evaluates to
		\begin{equation}\label{eq:map-in-5.10}
		\varinjlim_{\mathfrak T} H^1_{\Zar}(\mathfrak X\times \mathfrak {\mathfrak T},\O^{+,\times})\to H^1_{\an}(X\times T,\O^\times),
		\end{equation}
		where the colimit is over the cofiltered inverse system of formal models $\mathfrak T$ of $T$ with transition morphisms given by the admissible blow-ups. 
		\begin{claim}
			$\alpha$ is surjective.
		\end{claim}
		\begin{proof} 
			By \cite[Lemma~6.2.4]{Lutkebohmert_RigidCurves} applied to $\mathfrak X\times \mathfrak {\mathfrak T}\to \mathfrak T$, any line bundle on $X\times T$ admits a formal model on $\mathfrak X\times \mathfrak T$ after replacing $\mathfrak T$ by an admissible blow-up. Hence  \eqref{eq:map-in-5.10} is surjective, which implies that $\alpha$ is surjective.
		\end{proof}
		
		The first part is thus essentially the statement that the rigid generic fibre of the Picard functor of $\mathfrak X$ is the Picard functor of $X$: Namely, it is clear on $K$-points that the map
		\[ (\Pic_\mathfrak X)_{\eta}\to \Pic_{X}\]
		is injective. However, one can also give a more elementary argument that avoids the question whether $\Pic_\mathfrak X$ is representable by an admissible formal scheme. 
		
		For this we note that we can always cover $T$ by the generic fibre of an affine open covering of $\mathfrak T$. Therefore, the map \eqref{eq:map-in-5.10} has the same sheafification as the one defined on affinoids $T=\Spa(R,R^\circ)$ by
		\[H^1_{\Zar}(\mathfrak X\times \mathfrak T,\O^{+,\times})\to H^1_{\an}(X\times T,\O^\times),\]
		where $\mathfrak T:=\Spf(R^\circ)$.  For part (1), it remains to see that this sheafification is injective.
		
		To see this, we will first consider the sheaves $\O$ and $\bOx$ instead of $\O^\times$. Recall that we denote by $\O$ also the sheaf $\O^+\tf$ on $\Spf(\O_K)_{\ZAR}$.
		\begin{claim}\label{eq:bc-for-O}
			The following base-change morphism is an isomorphism for any $n\in \N$:
			\[
		\lambda^{\ast}R^n\pi_{\ZAR\ast}\O\isomarrow R^n\pi_{\AN\ast}\O\]
		\end{claim}
		\begin{proof}
			By the same argument as above, this is the sheafification of the map sending $T=\Spa(R,R^\circ)$ to 
		\[H^n_{\Zar}(\mathfrak X\times \Spf(R^\circ),\O^{+})\tf\to H^n_{\an}(X\times T,\O).\]
		This is an isomorphism as we can see by computing both sides in terms of \cH-cohomology for any affine cover of $\mathfrak X$.
		\end{proof}
		Next, we consider the sheaf $\bOx$ and prove:
		\begin{claim}
			 $\beta$ is injective.
		\end{claim}
		\begin{proof}
			  Set $\overbar{T}:=\Spec(R^\circ/\mathfrak m)$. By the same argument as above, $\beta$ is the sheafification of the morphism of presheaves given on $T=\Spa(R,R^\circ)$ by
		\[ H^1_{\Zar}(\mathfrak X\times \mathfrak T,\bOpx)\to H^1_{\an}(X\times T,\bOx).\]
		We now first note that by definition of $\bOpx$, the left hand side is $\Pic(\overbar{X}\times \overbar{T})\tf$. Upon specialisation at points in $T(K)$, the above map thus fits into a commutative diagram
		\[\begin{tikzcd}
			\Pic(\overbar{X}\times\overbar{T})\tf \arrow[r] \arrow[d] & {H^1_{\an}(X\times T,\bOx)} \arrow[d] \\
			{\Map(\overbar T(k),\Pic(\overbar{X})\tf)} \arrow[r, hook] & {\Map_{\lc}(T(K),H^1_{\an}(X,\bOx)).}
		\end{tikzcd}\]
		The left map has kernel $\Pic(\overbar{T})\tf$: Since $\overbar{X}\to \Spec(k)$ is smooth proper, its Picard functor is represented by a group scheme $\uP_{\overbar{X}}$. As by the Reduced Fibre Theorem of \cite{BLR-IV-Reduced-Fibre} the scheme $\overbar{T}$ is reduced, it follows that evaluation at the algebraically closed field $k$ induces an injection
		\[ 	\Pic(\overbar{X}\times\overbar{T})/\Pic(\overbar{T})=\Map(\overbar{T},\uP_{\overbar{X}})\hookrightarrow \Map(\overbar{T}(k),\Pic(\overbar X)).\]
		The bottom map is injective as $T(K)\twoheadrightarrow \overbar{T}(k)$ is surjective and $\Pic(\overbar{X})\tf\to H^1_{\an}(X,\bOx)$ is an isomorphism by Proposition~\ref{p:bOx-computes-line-bundles-on-special-fibre}.
		It follows that the top map is injective already after Zariski-sheafification on $\overbar{T}$. This proves that $\beta$ is injective.
		\end{proof}
		
		\begin{claim}\label{cl:alpha-inj}
			$\alpha$ is injective, hence an isomorphism.
		\end{claim}
		\begin{proof}
			We compare the exponential sequence 
\[ 0\to \O\xrightarrow{\exp} \O^\times\tf\to \bOx\to 0\]
on $\Spa(K)_{\AN}$ from Proposition~\ref{p:bOx} to its analogous integral version
\[0\to \O\xrightarrow{\exp} \O^{+\times}\tf\to \bOpx\to 0\]
 on $\Spf(\O_K)_{\ZAR}$. This results in a commutative diagram
 	\[\begin{tikzcd}[row sep ={0.4cm,between origins}, column sep ={1.2cm,between origins}]
 	R^1\pi_{\AN\ast}\O\arrow[rrr,hook,"\exp"]&&&{R^1\pi_{\AN\ast}\O^\times\tf} \arrow[rrr] &  &  & {R^1\pi_{\AN\ast}\bOx} \arrow[rrr] &  &  & {R^2\pi_{\AN\ast}\O}\\
 	&  &  &  &  &  &   \\
 	&&&&  &  &  &  & &  \\
 	\lambda^{\ast}R^1\pi_{\ZAR\ast}\O\arrow[rrr,hook]\arrow[uuu,"\sim"labelrotate]&&&{\lambda^{\ast}R^1\pi_{\ZAR\ast}\O^{+\times}\tf} \arrow[rrr] \arrow[uuu, "\alpha"'] &  &  & {\lambda^{\ast}R^1\pi_{\ZAR\ast}\bOpx} \arrow[rrr] \arrow[uuu,"\beta"',xshift=1.0ex,hook']&  &  & {\lambda^{\ast}R^2\pi_{\ZAR\ast}\O}. \arrow[uuu,"\sim"labelrotate]
 \end{tikzcd}\]
 We have already seen in \eqref{eq:bc-for-O} that the first and last vertical map are isomorphisms.
 The map $\exp$ in the top row is injective by \cite[Lemma~4.11]{heuer-diamantine-Picard}, thus so is the leftmost map on the bottom.
 The fact that $\beta$ is injective therefore implies that $\alpha$ is injective.
 \end{proof}
 \begin{claim}
 	$\beta$ is surjective, hence an isomorphism.
 \end{claim}
 \begin{proof}
	We first observe that for any sheaf $\mathcal F$ on $(\Spa(K))_{\AN}$, pullback along $X\times T\to X$ defines for any $T$ a natural map
	\[ H^1_{\an}(X,\mathcal F)\hookrightarrow H^1_{\an}(X\times T,\mathcal F)\]
	which is injective because it is split by any map $X\to X\times T$ induced by any $K$-points of $T$.
	Upon sheafification, this induces an injection, natural in $\mathcal F$,
	\[c_{\mathcal F}: \underline{H^1(X,\mathcal F)}\to R^1\pi_{\an\ast}\mathcal F\]
	where the left hand side denotes the associated locally constant sheaf on $(\Spa(K))_{\AN}$.
	
	The same works for $\mathfrak X\times \mathfrak T\to \mathfrak T$ and we  obtain an analogous morphism on $\Spf(\O_K)_{\ZAR}$.
	Combining this with the long exact sequences of $\exp$ in the proof of \Cref{cl:alpha-inj}, we arrive at a commutative diagram
	\[\begin{tikzcd}[row sep ={0.9cm,between origins}, column sep ={1.1cm,between origins}]
		&  & {\underline{H^1_{\AN}(X,\bOx)}} \arrow[rd,hook,start anchor={[yshift=0.3ex,xshift=0.8ex]south},end anchor={[xshift=3ex,yshift=-0.2ex]north west}] \arrow[rrr] &  &  & {\underline{H^2_{\AN}(X,\O)}} \arrow[rd, hook,start anchor={[yshift=0.3ex,xshift=1.5ex]south},end anchor={[xshift=3ex,yshift=0ex]north west}] \arrow[rrr] &  &  & {\underline{H^2_{\AN}(X,\O^\times\tf)}} \arrow[rd,hook,start anchor={[yshift=0.3ex,xshift=0.8ex]south},end anchor={[xshift=3ex,yshift=-0.2ex]north west},"c_{\O^\times[\frac{1}{p}]}"']\\
		{R^1\pi_{\AN\ast}\O^\times\tf} \arrow[rrr] &  &  & {R^1\pi_{\AN\ast}\bOx}\arrow[rru,dotted] \arrow[rrr] &  &  & {R^2\pi_{\AN\ast}\O}\arrow[rrr]&  &  & {R^2\pi_{\AN\ast}\O^\times\tf} \\
		&  & {\lambda^{\ast}\underline{H^1_{\ZAR}(\overbar{X},\bOpx)}} \arrow[rd, hook,start anchor={[yshift=0ex]south},end anchor={[xshift=5ex,yshift=-0.2ex]north west}] \arrow[uu, "\sim"{labelrotate,xshift=-6pt},xshift=-5pt] &  &  & &  \\
	{\lambda^{\ast}R^1\pi_{\ZAR\ast}\O^{+\times}\tf} \arrow[rrr] \arrow[uu, "\alpha"',"\sim"labelrotate] &  &  & {\lambda^{\ast}R^1\pi_{\ZAR\ast}\bOpx} \arrow[uu,"\beta"',xshift=2.2ex,hook']&  &  & {}
	\end{tikzcd}\]
	By Lemma~\ref{l:factorisation-of-boundary-map}, the boundary map on top factorises, giving the dotted arrow. Using that the rightmost diagonal map $c_{\O^\times[\frac{1}{p}]}$
	 is injective, it follows from a diagram chase that 
	 \[\underline{H^1_{\AN}(X,\bOx)} \oplus R^1\pi_{\AN\ast}\O^\times\tf\to R^1\pi_{\AN\ast}\bOx\]
	 is surjective.
	  From this it follows by another diagram chase that $\beta$ is an surjective.
	 \end{proof}
	 All in all, this shows that $\alpha$ and $\beta$ are isomorphisms.
\end{proof}

	We now put everything together to finish the proof of Theorem~\ref{t:Pic-functor-for-wtB}: In \eqref{eq:rel-thm-first-step}, we had seen
	\[ H^1_{\et}(X_\infty\times Y,\O^\times)\tf=\varinjlim_{i\in I}\varinjlim_T H^1_{\et}(X_i\times T,\bOx).\]
	By Lemma~\ref{l:bOx-in-arbitrary-char}, we have $H^1_{\et}(X_i\times T,\bOx)=H^1_{\an}(X_i\times T,\bOx)$. We compare this to formal models:
	Let $\mathfrak T=\Spf(\O(T)^\circ)$ as before. Then by Proposition~\ref{p:base-change-morph-for-bOx}.2, the map
	\[ H^1_{\Zar}(\mathfrak X_i\times \mathfrak T,\bOpx)\to H^1_{\an}(X_i\times T,\bOx)\]
	is an isomorphism after analytic sheafification in $T$, thus also after \'etale sheafification.
	Since $Y_{\etqcqs}=\twolim T_{\etqcqs}$, we deduce  that the following is an isomorphism after sheafification:
	\[ \varinjlim_{i\in I}\varinjlim_T H^1_{\et}(X_i\times T,\bOx) \leftarrow \varinjlim_{i}\varinjlim_TH^1_{\Zar}(\mathfrak X_i\times \mathfrak T,\bOpx).\]
	We recall that  by definition of $\bOpx$, we have $H^1_{\Zar}(\mathfrak X_i\times \mathfrak T,\bOpx)=\Pic(\overbar X_i\times \overbar{T})\tf$, which Zariski-sheafifies to $\uP_{\overbar X_i}(\overbar{T})\tf$.
	Second, we note that 	by \Cref{p:cohom-comparison}, we have
	\[ \O^+(Y)/\mathfrak m=\varinjlim_{Y\to T} \O^+(T)/\mathfrak m=\varinjlim_{Y\to T}\O(\overbar{T}),\]
	which implies that $\uP_{\overbar{X}_i}^\diamond(Y)=\varinjlim_T\uP_{\overbar{X}_i}(\overbar T).$
	All in all, this shows that
	\[ \varinjlim_{i}\varinjlim_TH^1_{\Zar}(\mathfrak X_i\times \mathfrak T,\bOpx)\to \varinjlim_{i}\varinjlim_T \uP_{\overbar{X}_i}(\overbar T)\tf\to   \varinjlim_{i}\uP_{\overbar{X}_i}^\diamond(Y)\tf \]
	is an isomorphism after sheafification in $Y_{\et}$.
	
	Putting everything together, this finishes the proof of the theorem.
\end{proof}

\subsection{Picard functors approximated by rigid Picard varieties}
So far, we have seen examples in which the map in \Cref{q:Pic(X)->Pic(X_infty)-surj} is far from an isomorphism. We now complement these results by giving a criterion for when the map does turn out to be an isomorphism:
\begin{Theorem}\label{t:Picard-bij}
	Let $K$ be a perfectoid field extension of $\Q_p$.
	Let $X_\infty\sim \varprojlim_{i\in I} X_i$ be a perfectoid tilde-limit of qcqs smooth rigid spaces. Assume that each $X_i$ admits a cover by affinoid opens $U_i$ such that $U_i\times_{X_i}X_j$ is affinoid for all $j\geq i$.  Suppose that the natural maps
	\[\textstyle\varinjlim_{i\in I} H^j_\an(X_i,\O)\to H^j_\an(X_\infty,\O)\]
	are isomorphism for $j=0,1$ and injective for $j=2$. Then also the map 
	\[\textstyle\varinjlim_{i\in I} \Pic(X_i)\isomarrow \Pic(X_\infty)\]
	is an isomorphism.
	If all $X_i$ are proper, then even the morphism of Picard functors on $\Perf_{K,\et}$ is an isomorphism
	\[\textstyle\varinjlim_{i\in I} \underline{\Pic}_{X_i}\isomarrow \underline{\Pic}_{X_\infty}.\]
\end{Theorem}
As a first application, observe that the Theorem applies in particular if all $X_i$ are affinoid and $X_\infty$ is affinoid perfectoid. Hence we recover
\Cref{r:previously-known}.1. We will see another interesting example in \Cref{p:ordinary-tower} below.
\begin{proof}
	We begin with a similar devissage as in \cite[\S2.3]{heuer-diamantine-Picard}: The long exact sequences of
	\[ 1\to \O^\times_1\to \O^\times\to \bOx\to 1\]
	induces a commutative diagram
	\[\begin{tikzcd}[column sep=0.2cm]
		H^0(X_i,\bOx)\arrow[r]&{H^1(X,\O_1^\times)} \arrow[r] & \Pic(X) \arrow[r] & {H^1(X,\bOx)} \arrow[r] & {H^2(X,\O_1^\times)} \\
		{\varinjlim H^0(X_i,\O_1^\times)}\arrow[r]\arrow[u]&{\varinjlim H^1(X_i,\O_1^\times)} \arrow[r] \arrow[u] & \varinjlim  \Pic(X_i) \arrow[u] \arrow[r] & {\varinjlim}H^1(X_i,\bOx) \arrow[u] \arrow[r] & {\varinjlim H^2(X_i,\O_1^\times)} \arrow[u]
	\end{tikzcd}\]
	where each $\varinjlim$ is over $i\in \N$.
	The first and fourth columns are isomorphisms by \Cref{p:cohom-comparison}. By the 5-Lemma, it thus suffices to prove that the second column is an isomorphism and the fifth column is injective.
	
	For this we use the short exact sequence of the logarithm
	\[ 0\to \mu_{p^\infty}\to\O_1^\times\xrightarrow{\log} \O\to 0\]
	which induces for any $n\in \N$ a commutative diagram
	\[\begin{tikzcd}[column sep =0.2cm]
		{H^{n-1}(X,\O)} \arrow[r]& {H^n(X,\mu_{p^\infty})} \arrow[r] & {H^n(X,\O_1^\times)} \arrow[r] & {H^n(X,\O)} \arrow[r] & {H^{n+1}(X,\mu_{p^\infty})} \\
		{\varinjlim\! H^{n-1}(X_i,\O)\!} \arrow[u] \arrow[r] &{\!\varinjlim\! H^n(X_i,\mu_{p^\infty}\!)\!} \arrow[r] \arrow[u] & {\!\varinjlim\! H^n(X_i,\O_1^\times)\!} \arrow[r] \arrow[u] & {\!\varinjlim\! H^n(X_i,\O)\!} \arrow[u] \arrow[r] & {\!\varinjlim\! H^{n+1}(X_i,\mu_{p^\infty}\!)\!} \arrow[u]
	\end{tikzcd}\]
	The second and fifth column are isomorphisms by \cite[Proposition 14.9]{etale-cohomology-of-diamonds}. For $n=1$, the first and fourth column are isomorphisms by assumption. Hence so is the middle map by the 5-Lemma. For $n=2$, the first column is an isomorphism and the fourth column is injective by assumption, hence the middle column is injective by the 4-Lemma.
	
	The statement  about Picard functors follows by exactly the same argument: For this we let $Y$ be any affinoid perfectoid space, then we replace $X$ by $X\times Y$ and $X_i$ by $X_i\times Y$ in the above discussion and use additionally \cite[Lemma~3.11, Proposition~4.2.2]{heuer-diamantine-Picard}.
\end{proof}

\section{Examples and counterexamples where...}
In this final section, we use our results to give some examples and counterexamples that answer related questions raised by other authors:

\subsection{perfection trivialises line bundles}

Theorem~\ref{t:Pic(wt B)} yields many examples of proper rigid spaces $X$ in characteristic $p$ for which
\[\Pic(X)\tf\to \Pic(X^{\perf})\]
is not injective. This gives a negative answer to the question posed in Das' thesis \cite[Question 7.0.2]{das2016vector}, motivated by $X=\P^1$, whether there is some kind of GAGA principle for perfections of proper smooth schemes over algebraically closed fields of characteristic $p$:

Indeed, if $X=B$ is an abelian variety with good supersingular reduction over any perfectoid field $K$, then $B^\perf=\wt B$ and we thus have a short exact sequence
\[ 0\to \wh{B}^\vee(K)\to \Pic(X)\to \Pic(X^{\perf}).\]
For example for $K=\C_p^\flat$, where $B(K)$ is topologically torsion by \cite[Proposition 2.23]{heuer-geometric-Simpson-Pic},  this means that only the coprime-to-$p$ torsion of $B^\vee(K)$ survives in $\Pic(X^\perf)$.

\subsection{not all line bundles on $X^\perf$ come from $X$}\label{s:more-LB-on-X^perf}
The same example shows that
$\Pic(X)\tf\to \Pic(X^{\perf})$
is not in general surjective: 

Let $E$ be an elliptic curve over $K$, then $X=E\times E$ has Picard rank $\rk \NS(X)=2+\rk\End(E)$.
Assume now that $K$ is algebraically closed of characteristic $p$ and let $E$ be a generically  ordinary elliptic curve over $K$ with supersingular reduction $\overbar{E}$. Then $\End(\overbar{E})$ is a quaternion algebra, thus $\rk \End(\overbar{E})=4$. On the other hand, $\End(E)=1$. Consequently,
\[\rk_{\Z[\frac{1}{p}]}\NS(X^{\perf})=3,\quad \rk_{\Z}\NS(X)=6.\]
Thus the cokernel of $\Pic(X)\tf\to \Pic(X^{\perf})$ is a $\Z\tf$-module of rank 3.
In general, for a smooth proper rigid space $X$, we expect this cokernel to be a finitely generated $\Z\tf$-module.

\subsection{$\Pic(X)$ is not $p$-divisible, neither $p$-torsionfree, neither discrete}
Let $X$ be a perfectoid space over $K$.
If $\Char K=p$, then $\Pic(X)$ is uniquely $p$-divisible since $\O^\times$ is. We shall now give an example that this can fail in characteristic $0$. 

This is interesting for two reasons: First, Bhatt--Scholze have shown that $\Pic(X)$ is uniquely $p$-divisible for affinoid perfectoid $X$ \cite[Corollary 9.7]{MR4502597}.
 Second, {Dorfsman-Hopkins}  has shown in \cite[Corollary~1.3]{DH2021untilting} that if $\Gamma(X,\O)$ is perfectoid, then $\Pic(X)$ is uniquely $p$-divisible if and only if the $\sharp$ map sets up a ``tilting correspondence of line bundles''
\[\sharp:\Pic(X^\flat)\isomarrow\Pic(X).\]
In particular, we thus give an example where this does not define a bijection.

To construct the counter-example,  let $q\in K^\times$, $|q|<1$ be such that we can find a compatible system of $p^n$-roots $q^{1/p^n}$ of $q$ in $K$ for all $n$. If $\Char K=0$, this means that $q$ is in the image of $\sharp:K^{\flat\times}\to K^\times$. For instance, we could take $K=\Q_p(p^{1/p^\infty})^\wedge$ and $q=p$.

Let $A=\G_m/q^{\Z}$ be the rigid Tate curve. Then we get a tower of ``canonical'' isogenies
\[\dots\xrightarrow{[p]}\G_m/q^{\frac{1}{p^n}\Z}\to\dots\xrightarrow{[p]}\G_m/q^{\frac{1}{p}\Z}\xrightarrow{[p]}\G_m/q^{\Z}.\]
If $\Char K=p$, then each transition morphism is the relative Frobenius morphism.

It is easy to see that \cite[Proposition 6.1]{perfectoid-covers-Arizona} holds in this case: There is a perfectoid space 
\[X:=A^{\perf}\sim \textstyle\varprojlim_{i}\G_m/q^{\frac{1}{p^i}\Z}.\]
 If $\Char K=p$, then $A^\perf$ is indeed the perfection. If instead  $\Char K=0$, one can still identify $A^\perf$ with the untilt of $A'^\perf$ where $A':= \G_{m,K^\flat}/q'^{\Z}$ for  $q'=(q^{1/p^n})\in K^\flat=\varprojlim_{x\mapsto x^p}K$.

Using the methods of this article, one can now show the following:
\begin{Proposition}\label{p:ordinary-tower}
	Write $X_i:=\G_m/q^{\frac{1}{p^i}\Z}$, then the natural map 
	\[ \textstyle\varinjlim_{i\in \N}\Pic(X_i)\to \Pic(X)\]
	is an isomorphism. In particular, $\Pic(X)$ fits into a short exact sequence
\[ 0\to q^{\Z[\frac{1}{p}]/\Z}\to K^\times/q^{\Z}\to \Pic(X)\to \Z\tf\to 0.\]
\end{Proposition}
This complements \Cref{t:Pic(wt B)} as it gives an example in terms of abelian varieties where the map in \Cref{q:Pic(X)->Pic(X_infty)-surj} is an isomorphism.

Like $K^\times$, the group $\Pic(X)$ is thus in general neither $p$-torsionfree nor $p$-divisible. Moreover, $\Pic(X)$ in this case has a natural structure of a non-discrete topological group.
\begin{proof}
	We first treat the case that $\Char(K)=0$. For this we wish to invoke \Cref{t:Picard-bij}. For this to apply we need to verify that for $n=0,1,2$, the following map is an isomorphism:
	\[ \textstyle\phi_n:\varinjlim_{i\in \N} H^n(X_i,\O)\to H^n(X,\O).\]
	To see this, we compute $H^n(X,\O)$ using the Cartan--Leray sequence for the cover $\wt A\to X$: This is Galois with group $\Gamma\cong \Z_p$. It therefore follows from Proposition~\ref{p:H^i(wtA,O^+)} that
	\[ H^n(X,\O)=H^n_\cts(\Gamma,K)=\begin{cases}K&\quad \text{ for }n=0,1,\\0& \quad \text{ for }n\geq 2.\end{cases}\]
	For $n=0$, this immediately implies that $\phi_0$ is an isomorphism.
	
	For $n=2$, we also have $H^n(X_i,\O)=0$ for all $i$ since $X_i$ is a curve. Hence $\phi_2$ is bijective.
	
	It remains to treat $\phi_1$: For this we have  $H^1(X,\O)=\Hom(\Gamma,K)=K$. On the other hand, $H^1_\an(X_i,\O)$ is a one-dimensional $K$-vector space and the transition maps are isomorphisms. It thus suffices to prove that $\phi_1$ is injective.
	To see this, observe that $\Gamma\subseteq T_pA$ is an anticanonical subgroup. Hence the statement follows from \cite[\S4.4, Corollary~4.20.1]{HMW-abeloid-Simpson}.
	
	This shows the first sentence of the Lemma.	
	 To deduce the second part, it remains to show that $\varinjlim_{i\in \N}\NS(X_i)=\Z\tf$. We may assume that $K$ is algebraically closed. Due to \Cref{l:colim[p]=colim p* on Pics}, it suffices to see that 
	\[ \varinjlim H^2_{\et}(X_i,\mu_{p})\]
	vanishes. But this equals $ H^2_{\et}(X,\mu_p)$, and by the same Cartan--Leray argument for $\wt A\to X$ as above we indeed have 
	\[H^2_{\et}(X,\mu_p)=H^2_\cts(\Gamma,\Z_p)=0.\]
	We thus obtain the desired exact sequence.
	
	The case of characteristic $p$ follows that of characteristic $0$: We have $A^{\perf\flat}=A'^\perf$. Since $H^0(X,\O_X)=K$ is perfectoid, \cite[Theorem~1.2]{DH2021untilting} shows that
	\[\textstyle \Pic(A'^\perf)=\varprojlim_{[p]} \Pic(A^\perf),\]
	from which the result follows using that $K^{\flat\times}=\varprojlim_{x\mapsto x^p} K^\times$. 
\end{proof}
We note that \Cref{t:Picard-bij} in fact shows more, namely it shows that in characteristic $0$, we have a short exact sequence of sheaves on $\Perf_K$ computing the Picard functor of $X$:
\[ 0\to \underline{q^{\Z[\frac{1}{p}]/\Z}}\to \G_m/q^{\Z}\to \underline{\Pic}_X\to \underline{\Z\tf}\to 0.\]
Thus $\underline{\Pic}_X$ is represented in this case by the quotient of a rigid group by a discrete group.
\subsection{the untilt $\sharp:\Pic(X^\flat)\to \Pic(X)$ is not a bijection}
In the previous example, one can now also make the  $\sharp$-map on line bundles explicit, and see that this has non-trivial kernel and cokernel, in line with Dorfsman-Hopkins' result: Namely, one sees by comparing descent data that the sharp map fits into an exact sequence
\[
\begin{tikzcd}
	K^\times/q^{\Z[\frac{1}{p}]} \arrow[r]                   & \Pic(X) \arrow[r]          & {\Z[\frac{1}{p}]} \arrow[r]           & 0 \\
	K^{\flat\times}/q'^{\Z[\frac{1}{p}]} \arrow[r] \arrow[u,"\sharp"] & \Pic(X^\flat) \arrow[r] \arrow[u,"\sharp"] & {\Z[\frac{1}{p}]} \arrow[u,equal] \arrow[r] & 0
\end{tikzcd} \]
where $X^\flat=A'^\perf$. This is discussed in more detail in
 \cite[Example 4.6]{DH2021untilting}.

In particular, there are in general line bundles on $A^\perf$ that do not lift to $\G_m^\perf$-torsors.

\end{document}